\numberwithin{equation}{section}
\definecolor{my_color}{rgb}{0,0.5,0.5}
\definecolor{mixt}{rgb}{0.5,0.3,0.2}
\definecolor{darkgreen}{rgb}{0.09, 0.35, 0.27}
\definecolor{forest}{rgb}{0.13, 0.55, 0.13}
\font\tencyr=wncyr10 
\font\tencyi=wncyi10 
\font\tencysc=wncysc10 
\def\rus{\tencyr\cyracc}
\def\rusi{\tencyi\cyracc}
\def\rusc{\tencysc\cyracc}
\renewcommand{\@cite}[2]{[{{\bf #1}\if@tempswa , #2\fi}]}
\renewcommand{\@biblabel}[1]{[{\bf #1}]\hfill}
\newtheorem{thm}{Theorem}[section]
\newtheorem{lm}[thm]{Lemma}
\newtheorem{cl}[thm]{Corollary}
\newtheorem{prop}[thm]{Proposition}
\theoremstyle{remark}
\newtheorem{rmk}[thm]{Remark}
\theoremstyle{definition}
\newtheorem{ex}[thm]{Example}
\newtheorem{df}{Definition}
\newcommand {\ah}{{\mathfrak a}}
\newcommand {\be}{{\mathfrak b}}
\newcommand {\ce}{{\mathfrak c}}
\newcommand {\ff}{{\mathfrak f}}
\newcommand {\g}{{\mathfrak g}}
\newcommand {\h}{{\mathfrak h}}
\newcommand {\el}{{\mathfrak l}}
\newcommand {\n}{{\mathfrak n}}
\newcommand {\p}{{\mathfrak p}}
\newcommand {\q}{{\mathfrak q}}
\newcommand {\rr}{{\mathfrak r}}
\newcommand {\es}{{\mathfrak s}}
\newcommand {\te}{{\mathfrak t}}
\newcommand {\ut}{{\mathfrak u}}
\newcommand {\slno}{{\mathfrak {sl}}_{n+1}}
\newcommand {\son}{{\mathfrak {so}}_{n}}
\newcommand {\eus}{\EuScript}
\newcommand {\gS}{{\eus S}}
\newcommand {\esi}{\varepsilon}
\newcommand {\ap}{\alpha}
\newcommand {\lb}{\lambda}
\newcommand {\ca}{{\mathcal A}}
\newcommand {\BC}{{\mathbb C}}
\newcommand {\BQ}{{\mathbb Q}}
\newcommand {\ad}{{\mathrm{ad\,}}}
\newcommand {\ads}{ {\mathrm{ad}}^* }
\newcommand {\Ad}{{\mathrm{Ad}}}
\newcommand {\ind}{{\mathrm{ind\,}}}
\newcommand {\Lie}{{\mathrm{Lie\,}}}
\newcommand {\rk}{{\mathsf{rk\,}}}
\newcommand {\supp}{{\mathsf{supp}}}
\newcommand {\trdeg}{{\mathrm{trdeg\,}}}
\newcommand {\GR}[2]{{\textrm{{\sf\bfseries #1}}}_{#2}}
\newcommand {\un}{\underline}
\newcommand {\beq}{\begin{equation}}
\newcommand {\eeq}{\end{equation}}
\newcommand {\nap}{\n_{\{\ap\}}}
\newcommand {\bb}{\boldsymbol{b}}
\newcommand {\bi}{{\boldsymbol{i}}}
\newcommand {\bxi}{\boldsymbol{\zeta}}
\newcommand {\CP}{{\sf CP}}
\newcommand{\curge}{\succcurlyeq}
\newcommand{\curle}{\preccurlyeq}
\renewcommand{\le}{\leqslant}
\renewcommand{\ge}{\geqslant}
\renewcommand{\lg}{\langle}
\newcommand{\rg}{\rangle}
\font\Bbbfont=msbm10 scaled 1200%
\font\Bbbsmallfont=msbm8%
\begin{document}
\setlength{\parskip}{2pt plus 4pt minus 0pt}
\hfill {\scriptsize August 17, 2021} 
\vskip1ex

\title[Commutative polarisations]{Commutative
polarisations and the Kostant cascade}
\author{Dmitri I. Panyushev}
\address{
Institute for Information Transmission Problems of the R.A.S., 
Moscow 127051, Russia}
\email{panyushev@iitp.ru}
\thanks{This research was funded by RFBR, project {\rus N0} 20-01-00515.}
\keywords{index, optimal nilradical, Heisenberg algebra, abelian ideal}
\subjclass[2010]{17B22, 17B05, 17B30}
\dedicatory{To my friend and colleague Alexander G. Elashvili, with gratitude} 
\begin{abstract}
Let $\g$ be a complex simple Lie algebra. We classify the parabolic subalgebras $\p$ of $\g$ 
such that the nilradical of $\p$ has a commutative polarisation. The answer is given in terms of
the Kostant cascade. It requires also the notion of an optimal nilradical and some properties of abelian ideals in a Borel subalgebra of $\g$.
\end{abstract}
\maketitle

\section{Introduction}
Let $\q$ be a complex algebraic Lie algebra and $\eus S(\q)$ its symmetric algebra. The {\it index\/} of 
$\q$, denoted $\ind\q$, is the minimal dimension of the stabilisers $\q^\xi$, $\xi\in\q^*$, with respect to 
the coadjoint representation of $\q$. Then $\xi$ is said to be {\it regular}, if $\dim\q^\xi=\ind\q$. If 
$\ah\subset\q$ is an abelian subalgebra, then $\eus S(\ah)$ is a Poisson commutative subalgebra of 
$\eus S(\q)$. Hence $\dim\ah=\trdeg \eus S(\ah)\le (\dim\q+\ind\q)/2=:\bb(\q)$, see \cite[0.2]{vi90} 
or~\cite[Theorem\,14]{ooms}. If $\dim\ah=\bb(\q)$, then $\ah$ is a maximal isotropic subspace of $\q$ 
with respect to the Kirillov form associated with any regular $\xi\in\q^*$. Following~\cite[Sect.\,5]{ooms}, 
such $\ah$ is called a {\it commutative polarisation} (=\,\CP) in $\q$. 
We also say that $\q$ is a Lie algebra {\it with \CP}\ or that $\q$ {\it has a} \CP.

For any Lie algebra $\q$, there is a Poisson-commutative subalgebra $\eus A\subset \eus S(\q)$ such 
that $\trdeg \eus A= \bb(\q)$~\cite{sad}. Hence, if $\ah$ is a \CP\ in $\q$, then 
$\eus S(\ah)$ is a Poisson-commutative subalgebra of $\eus S(\q)$ with $\trdeg \eus S(\ah)=\bb(\q)$. 
Therefore, Lie algebras $\q$ having a \CP\ admit a nice explicit Poisson-commutative subalgebra of 
$\eus S(\q)$, with {\bf maximal} transcendence degree. (A generalisation of Sadetov's result to 
commutative subalgebras of the enveloping algebra $\eus U(\q)$ is recently obtained by
Yakimova~\cite{kos}.)

Basic results on commutative polarisations are obtained by Elashvili--Ooms~\cite{ag03}. They also 
proved that, for any parabolic subalgebra $\p$ of a simple Lie algebra of type $\GR{A}{n}$ or 
$\GR{C}{n}$, the nilradical of $\p$ has a \CP. In this article, we characterise the parabolic subalgebras 
$\p$ such that the nilradical $\n=\p^{\sf nil}$ has a \CP\ for any simple Lie algebra $\g$. We 
assume that $\p$ is standard, i.e., $\p\supset\be$ for a fixed Borel subalgebra $\be$ of $\g$. Then 
$\n\subset [\be,\be]$ is a $\be$-ideal. It is easily seen that if $\n$ contains an abelian subalgebra, then 
it contains an abelian $\be$-{\bf ideal} of the same dimension, cf.~\cite[Theorem\,4.1]{ag03}. If 
$\ah\lhd\n$ is an abelian $\be$-ideal with $\dim\ah=\bb(\n)$, then $\ah$ is said to be a \CP-{\it ideal} (of
$\n$). 
Unless otherwise stated, ``nilradical'' means ``the nilradical of a standard parabolic subalgebra of $\g$''. 
For any nilradical $\n$ with \CP, we also point out a \CP-ideal in $\n$. Our major tools are: 
\begin{itemize}
 \item a certain set $\eus K$ of strongly orthogonal roots in the root system $\Delta$ of $\g$ (the 
{\it Kostant cascade})~\cite{jos77,ko12}; 
 \item Joseph's formula for $\ind\n$ and the notion of an optimal parabolic subalgebra~\cite{jos77},
 which allows us to define the {\it optimisation\/} of a nilradical;   
 \item theory of abelian ideals of $\be$~\cite{imrn} and related results on root systems~\cite{p06,p20b}. 
\end{itemize}
Let $\Delta^+$ be the set of {\it positive\/} roots associated with $\be$ and $\Pi\subset\Delta^+$ the set 
of {\it simple\/} roots. Then $\eus K=\{\beta_1,\dots,\beta_m\}$ is a certain subset of $\Delta^+$ and the 
roots in $\eus K$ are {\it strongly orthogonal}, which means that $\beta_i\pm\beta_j\not\in\Delta$ for all 
$i,j$. The inductive construction of $\eus K$, which begins with the highest root $\theta=\beta_1$, makes 
$\eus K$ a graded poset such that $\beta_1$ is the unique maximal element. The partial order ``$\curle$''
in $\eus K$ is the restriction of the {\it root order\/} in $\Delta^+$. For any $\beta\in\eus K$, the upper ideal 
$\{\beta'\in \eus K\mid \beta\curle\beta'\}$ is a chain (i.e., it is linearly ordered). To each $\beta\in\eus K$, 
we naturally attach the subset $\Phi(\beta)\subset\Pi$ such that 
$\Pi=\bigsqcup_{\beta\in\eus K}\Phi(\beta)$ and $\#\Phi(\beta)\le 2$, see Section~\ref{sect:marked-optim} 
for details. This  $\Phi$ is regarded as a map from $\eus K$ to $2^{\Pi}$.
Then $\Phi^{-1}$ is a well-defined surjective map from $\Pi$
to $\eus K$. The triple $(\eus K, \curle, \Phi)$ is called the {\it marked cascade poset\/} of $\g$.

Let $\Delta(\n)\subset\Delta^+$ denote the set of roots of a nilradical $\n$. The {\it optimisation\/} of $\n$ 
is the maximal nilradical $\tilde\n$ such that $\Delta(\n)\cap\eus K=\Delta(\tilde\n)\cap\eus K$ and an 
important property of the passage $\n\leadsto\tilde\n$ is that $\bb(\n)=\bb(\tilde\n)$. (A practical
description of $\tilde\n$ is given in Section~\ref{subs:optim}.) For $\ap\in\Pi$, let $\nap$ be the nilradical 
of the corresponding maximal parabolic subalgebra. If $\nap$ is an abelian Lie algebra, then 
$\nap$ is its own \CP-ideal. The abelian nilradicals play a key role in our theory. In 
Sections~\ref{sect:CP-da} and \ref{sect:ohne-CP}, we prove that a nilradical $\n$ has a \CP\ if and only if 
at least one of the following two conditions is satisfied:
\begin{enumerate}
\item 
$\n$ is the {\it Heisenberg nilradical}, i.e., one associated with the highest root; in this case, if $\ah$ 
is any maximal abelian ideal of $\be$, then $\ah\cap\n$ is a \CP-ideal of $\n$, and vice versa.
\item 
there is an abelian nilradical $\nap$ such that $\n$ is contained in the optimisation of  $\nap$. There 
can be several abelian nilradicals with this property, and, for a ``right'' choice of such $\check\ap\in\Pi$, a
\CP-ideal of $\n$ is $\n\cap\n_{\{\check\ap\}}$.
\end{enumerate}
A complement to (2) is that, 
for $\g\ne\slno$, one can always pick $\check\ap$ in $\Phi(\Phi^{-1}(\ap))$. (See Section~\ref{sect:CP-da}
for details.)
For all simple Lie algebras $\g$, the Hasse diagrams of $(\eus K,\curle)$ and the subsets 
$\Phi(\beta)$, $\beta\in\eus K$, are depicted in Section~\ref{sect:tables}. That visual information makes
application of our main result to be easy and routine, see Example~\ref{ex:small-rk}.  
The description above shows that if $\g$ has no parabolic subalgebras with abelian nilradicals, then the 
Heisenberg nilradical is the only nilradical with \CP. This happens precisely if $\g$ is of type 
$\GR{G}{2}$, $\GR{F}{4}$, $\GR{E}{8}$. Another consequence is that $\n$ has a \CP\ if and only if its 
optimisation, $\tilde\n$, does. For $\GR{A}{n}$ and 
$\GR{C}{n}$, there is an abelian nilradical $\nap$ such that $\widetilde\nap=[\be,\be]$. This implies that
here any nilradical has a \CP, which recovers a result of Elashvili--Ooms~\cite[Section\,6]{ag03}.

In Section~\ref{sect:compl}, we prove that if $\p$ is an optimal parabolic subalgebra, then
$\ind\p+\ind \p^{\sf nil}=\rk\g$. For any nilradical $\n=\p^{\sf nil}$, Joseph constructs a solvable Lie 
algebra $\ff$ with $\n\subset\ff\subset\be$, which is used for computing $\ind\n$~\cite{jos77}, see also
Section~\ref{subs:optim}. We prove that $\ff$ is Frobenius (i.e., $\ind\ff=0$), $\bb(\ff)=\bb(\n)$, and that 
$\ff$ has a \CP\ if and only if $\n$ does. Then we provide an invariant-theoretic consequence of the
relation $\bb(\q')=\bb(\q)$ for Lie algebras $\q'\subset\q$. Namely, in this case one has 
$\gS(\q)^\q\subset \gS(\q')$, see Proposition~\ref{prop:inv-th}. Note also that if $\ah$ is a \CP\ in $\q$, 
then $\bb(\ah)=\dim\ah=\bb(\q)$.

\un{Main notation}. 
Let $G$ be a simple algebraic group with $\g=\Lie(G)$. Then
\begin{itemize}
\item[--] $\be$ is a fixed Borel subalgebra of $\g$ with $\ut^+=\ut=[\be,\be]$;
\item[--] $\te$ is a fixed Cartan subalgebra in $\be$ and $\Delta$ is the root system of $\g$ with respect to $\te$;
\item[--] $\Delta^+$ is the set positive roots corresponding to $\ut$;  
\item[--] $\Pi=\{\ap_1,\dots,\ap_n\}$ is the set of simple roots in $\Delta^+$ and the corresponding fundamental weights are $\varpi_1,\dots,\varpi_n$;
\item[--] $\te^*_\BQ$ is the $\BQ$-vector subspace of $\te^*$ spanned by $\Delta$, and $(\ ,\, )$ is the 
positive-definite form on $\te^*_\BQ$ induced by the Killing form on $\g$; as usual, 
$\gamma^\vee=2\gamma/(\gamma,\gamma)$ for $\gamma\in\Delta$.
\item[--] For each $\gamma\in\Delta$, $\g_\gamma$ is the root space in $\g$ and 
$e_\gamma\in\g_\gamma$ is a nonzero vector;
\item[--]  If $\ce\subset\ut^\pm$ is a $\te$-stable subspace, then $\Delta(\ce)\subset \Delta^\pm$ is the set of 
roots of $\ce$;
\item[--]  $\theta$ is the highest root in $\Delta^+$;
\item[--]  $\bb(\q)=(\dim\q+\ind\q)/2$ for a Lie algebra $\q$;
\item[--]  In explicit examples, the Vinberg--Onishchik numbering of simple roots is used, 
see~\cite[Table\,1]{VO}.
\end{itemize}

\section{The marked cascade poset and optimisation} 
\label{sect:marked-optim}

\subsection{The root order in $\Delta^+$ and commutative roots}
\label{subs:root-order}
We identify $\Pi$ with the vertices of the Dynkin diagram of $\g$. For any $\gamma\in\Delta^+$, let 
$[\gamma:\ap]$ be the coefficient of $\ap\in\Pi$ in the expression of $\gamma$ via $\Pi$. The 
{\it support\/} of $\gamma$ is $\supp(\gamma)=\{\ap\in\Pi\mid [\gamma:\ap]\ne 0\}$. As is well known, 
$\supp(\gamma)$ is a connected subset of the Dynkin diagram. For instance, $\supp(\theta)=\Pi$
and $\supp(\ap)=\{\ap\}$. Let ``$\curle$'' denote the {\it root order\/} in $\Delta^+$, i.e., we write 
$\gamma\curle\gamma'$ if $[\gamma:\ap]\le [\gamma':\ap]$ for all $\ap\in\Pi$. Then $\gamma'$ covers
$\gamma$ if and only if $\gamma'-\gamma\in\Pi$, which implies that $(\Delta^+,\curle)$ is a graded 
poset. Write $\gamma\prec\gamma'$ if $\gamma\curle\gamma'$ and $\gamma\ne\gamma'$. 
\\ \indent
An {\it upper ideal\/} of $(\Delta^+,\curle)$ is a subset $I$ such that if $\gamma\in I$ and 
$\gamma\prec\gamma'$, then $\gamma'\in I$. Therefore, $I$ is an upper ideal if and only if 
$\ce=\bigoplus_{\gamma\in I} \g_\gamma$ is a $\be$-ideal of $\ut$ (i.e., $[\be,\ce]\subset\ce$). 
For $\gamma\in\Delta^+$, let $I\lg\gamma\rg$ denote the upper ideal with the unique minimal element 
$\gamma$. That is, $I\lg\gamma\rg=\{\nu\in\Delta^+\mid \nu\curge\gamma\}$. Let 
$\ce\lg\gamma\rg$ be the corresponding $\be$-ideal in $\ut$. As in~\cite{p06}, $\gamma$ is said to 
be {\it commutative}, if $\ce\lg\gamma\rg$ is an abelian ideal. Let $\Delta^+_{\sf com}$ denote the set of 
commutative roots. A description of $\Delta^+_{\sf com}$ for all simple Lie algebras (=\,reduced 
irreducible root systems), which we recall below, is given in~\cite[Theorem\,4.4]{p06}. Write 
$\theta=\sum_{i=1}^n m_i\ap_i$ (i.e., $m_i=[\theta:\ap_i]$) and consider the element 
$[\theta/2]:=\sum_{i=1}^n [m_i/2]\ap_i$ in the root lattice. Then $[\theta/2]\in\Delta^+\cup\{0\}$ and
for $\gamma\in\Delta^+$, one has
\beq            \label{eq:non-com}
   \gamma\not\in\Delta^+_{\sf com} \ \Longleftrightarrow \ \gamma\curle [\theta/2] .
\eeq
Note that $[\theta/2]=0$ if and only if $\g$ is of type $\GR{A}{n}$ and then $\Delta^+=\Delta^+_{\sf com}$.
In the other cases, $[\theta/2]$ is the unique maximal non-commutative root and $\Delta^+_{\sf com}$ is a proper subset of $\Delta$.
A conceptual proof for these observations appears in~\cite[Section\,4]{p20a}.
 
Given a dominant $\lb\in\te^*_\BQ$, set 
$\Delta^\pm_\lb=\{\gamma\in\Delta^\pm\mid (\lb,\gamma)=0 \}$ and
$\Delta_\lb=\Delta^+_\lb\cup \Delta^-_\lb$. Then $\Delta_\lb$ is the root system of a semisimple 
subalgebra $\g_\lb\subset \g$ and $\Pi_\lb=\Pi\cap\Delta^+_\lb$ is the set of simple roots
in $\Delta^+_\lb$. Set $\Delta_\lb^{{>}0}=\{\gamma\in \Delta^+\mid (\lb,\gamma)>0\}$. Then
\begin{itemize}
\item  $\Delta^+ =\Delta^+_\lb \sqcup \Delta_\lb^{{>}0}$;
\item $\p_\lb=\g_\lb+\be$ is a standard parabolic subalgebra of $\g$; 
\item the set of roots for the nilradical $\n_\lb=\p_\lb^{\sf nil}$ is $\Delta_\lb^{{>}0}$; it is also denoted by
$\Delta(\n_\lb)$.
\end{itemize}
If $\lb=\theta$, then  $\n_\theta$ is a {\it Heisenberg Lie algebra}~\cite[Sect.\,2]{jos76}. In this special case, 
we write $\eus H_\theta$ in place of $\Delta_\theta^{{>}0}$, 
and $\eus H_\theta=\Delta(\n_\theta)$ is said to be the {\it Heisenberg subset\/} (of $\Delta^+$).

\subsection{The cascade poset}
\label{subs:MCP} 
The construction of the Kostant cascade $\eus K$ is briefly exposed below, while the whole story can be found in~\cite[Sect.\,2]{jos77}, \cite[Section\,3a]{lw}, and \cite{ko12}. Whenever we wish to stress that
$\eus K$ is associated with $\g$, we write $\eus K(\g)$ for it.
\\ \indent
{\it\bfseries (1)}  We begin with $(\g\lg1\rg, \Delta(1),\beta_1)=(\g,\Delta,\theta)$ and consider the (possibly 
reducible) root system $\Delta_\theta$. The highest root $\theta=\beta_1$ is the unique element of the 
{\bf first} (highest) level in $\eus K$. Let $\Delta_\theta=\bigsqcup_{j=2}^{d_2} \Delta\lg j\rg$
be the decomposition into irreducible root systems and $\Pi\lg j\rg=\Pi\cap \Delta\lg j\rg$. Then
$\Pi_\theta=\bigsqcup_{j=2}^{d_2}\Pi\lg j\rg$ and $\{\Pi\lg j\rg\}$ are the connected components of
$\Pi_\theta\subset \Pi$.
\\ \indent
{\it\bfseries (2)}   Write $\g\lg j\rg$ for the simple subalgebra of $\g$ with root system $\Delta\lg j\rg$. Then
$\g_\theta=\bigoplus_{j=2}^{d_2}\g\lg j\rg$.  Let $\beta_{j}$ be the highest root in $\Delta\lg j\rg^{+}=\Delta\lg j\rg\cap\Delta^+$. The roots $\beta_2,\dots,\beta_{d_2}$ are 
the {\it descendants\/} of $\beta_1$, and they form the {\bf second} level of $\eus K$.
Note that $\supp(\beta_j)=\Pi\lg j\rg$, hence different descendants have disjoint supports. 
\\ \indent
{\it\bfseries (3)}   Making the same step with each pair $(\Delta\lg j\rg,\beta_j)$, $j=2,\dots,d_2$, we get a collection 
of smaller simple subalgebras inside each $\g\lg j\rg$ and smaller irreducible root systems inside $\Delta\lg j\rg$. 
This provides the descendants for each $\beta_j$ ($j=2,\dots,d_2$), i.e., the elements of the {\bf third} 
level in $\eus K$. And so on...
\\ \indent
{\it\bfseries (4)}   The procedure eventually terminates and yields a maximal set 
$\eus K=\{\beta_1,\beta_2,\dots,\beta_m\}$ of strongly orthogonal roots in $\Delta^+$, which is called 
the {\it Kostant cascade}. 

\noindent
Thus, each $\beta_i\in\eus K$ occurs as the highest root of a certain irreducible root system $\Delta\lg i\rg$ 
inside $\Delta$ such that $\Pi\lg i\rg=\Pi\cap\Delta\lg i\rg^+$ is a basis for $\Delta\lg i\rg$. 

We think of $\eus K$ as poset such that $\beta_1=\theta$ is the unique maximal element and each 
$\beta_i$ covers exactly its own descendants. If $\beta_j$ is a descendant of $\beta_i$, then 
$\beta_j\prec \beta_i$ in $(\Delta^+,\curle)$
and $\supp(\beta_j)\varsubsetneq\supp(\beta_i)$, while different descendants of 
$\beta_i$ are not comparable in $\Delta^+$. Therefore the poset structure of $\eus K$ is the restriction of the  root order in $\Delta^+$.
The resulting poset $(\eus K, \curle)$ is called the {\it cascade poset}. The numbering of $\eus K$ is not 
canonical. We only assume that it is a linear extension of $(\eus K,\curle)$, i.e., if $\beta_j$ is a 
descendant of $\beta_i$, then $j>i$. 

Using the decomposition $\Delta^+ =\Delta^+_\theta \sqcup \eus H_\theta$ and induction on $\rk\g$, one
readily obtains the disjoint union determined by $\eus K$: 
\beq            \label{eq:decomp-Delta}
     \Delta^+=\bigsqcup_{i=1}^m \eus H_{\beta_i} ,
\eeq
where $\eus H_{\beta_i}$ is the Heisenberg subset in $\Delta\lg i\rg^+$ and 
$\eus H_{\beta_1}=\eus H_\theta$. The geometric counterpart of this decomposition is the direct sum of 
vector spaces
\[
      \ut=\bigoplus_{i=1}^m  \h_i ,
\]
where $\h_i$ is the Heisenberg Lie algebra in $\g\lg i\rg$ and $\Delta(\h_i)=\eus H_{\beta_i}$. In 
particular, $\h_1=\n_\theta$. For any $\beta_i\in\eus K$, we set $\Phi(\beta_i)=\Pi\cap \eus H_{\beta_i}$.
It then follows from~\eqref{eq:decomp-Delta} that 
\[
           \Pi=\bigsqcup_{\beta_i\in\eus K} \Phi(\beta_i) .
\]
One can think of $\Phi$ as a map from $\eus K$ to $2^{\Pi}$. Note that $\#\Phi(\beta_i)\le 2$ and
$\#\Phi(\beta_i)= 2$ if and only if the root system $\Delta\lg i\rg$ is of type $\GR{A}{n}$ with $n\ge 2$;
equivalently, $\Pi\lg i\rg$ is a non-trivial chain in the Dynkin diagram and all roots in $\Pi\lg i\rg$ have the same 
length. Our definition of the subsets $\Phi(\beta_i)$ yields the well-defined map  $\Phi^{-1}: \Pi\to \eus K$, 
where $\Phi^{-1}(\ap)=\beta_i$ if $\ap\in \Phi(\beta_i)$. Note that $\ap\in\Phi(\Phi^{-1}(\ap))$.

\begin{df}
The cascade poset $(\eus K,\curle)$ with subsets $\Phi(\beta_i)$ attached to the respective nodes is
said to be the {\it marked cascade poset} ({\sf MCP}).  We think of {\sf MCP} as a triple 
$(\eus K, \curle, \Phi)$.
\end{df}

The Hasse diagrams of all cascade posets, with subsets $\Phi(\beta_i)$ attached to each node, are 
presented in Section~\ref{sect:tables}. These diagrams (without $\Phi$-data) appear already 
in~\cite[Section\,2]{jos77}. Note that Joseph uses the reverse order in $\eus K$ and our upper ideals of 
$(\eus K,\curle)$ are ``parabolic subsets'' in Joseph's terminology.
\\  \indent
Let us gather some properties of {\sf MCP} that either are already explained above or easily follow from the construction of $\eus K$.

\begin{lm}    \label{lm:K-svojstva}   
Let $(\eus K, \curle, \Phi)$ be a {\sf MCP}.
\begin{enumerate}
\item The partial order ``$\curle$'' coincides with the restriction to $\eus K$ of the root order in 
$\Delta^+$;
\item two elements $\beta_i,\beta_j\in\eus K$ are comparable if and only if\/ 
$\supp(\beta_i)\cap\supp(\beta_j)\ne \varnothing$; and then one support is contained in another;
\item each $\beta_j$, $j\ge 2$, is covered by a unique element of $\eus K$;
\item for any $\beta_j\in\eus K$, the interval 
$[\beta_j,\beta_1]_{\eus K}=\{\nu\in\eus K\mid \beta_j\curle\nu\curle\beta_1\} \subset\eus K$ is a chain.
\item  For $\ap\in\Pi$, we have
$\ap\in\Phi(\beta_i)$ if and only if $(\ap, \beta_i)>0$.
\end{enumerate}
\end{lm}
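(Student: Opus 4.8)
The plan is to verify the five assertions of Lemma~\ref{lm:K-svojstva} essentially by unwinding the inductive construction of $\eus K$ together with the decomposition \eqref{eq:decomp-Delta}. Items (1), (3), and (4) require almost no new work: (1) is how the poset structure on $\eus K$ was \emph{defined} (each $\beta_i$ covers exactly its descendants, and a descendant $\beta_j$ of $\beta_i$ satisfies $\beta_j\prec\beta_i$ in $(\Delta^+,\curle)$ with $\supp(\beta_j)\subsetneq\supp(\beta_i)$), so the Hasse diagram of $(\eus K,\curle)$ coincides with the Hasse diagram of the restriction of the root order; (3) holds because at each stage of the recursion the ambient irreducible root system $\Delta\langle i\rangle$ splits into \emph{disjoint} components, so every $\beta_j$ with $j\ge 2$ arises as the highest root of exactly one such component, hence has a unique parent; and (4) follows from (3) by walking up the unique chain of parents from $\beta_j$ to $\beta_1=\theta$, so the interval $[\beta_j,\beta_1]_{\eus K}$ is totally ordered.

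For (2), I would argue both directions. If $\beta_i$ and $\beta_j$ are comparable, say $\beta_j\curle\beta_i$, then by the construction $\beta_j$ lies in a component $\Delta\langle j\rangle$ obtained by successively passing to subsystems inside $\Delta\langle i\rangle$, so $\supp(\beta_j)=\Pi\langle j\rangle\subset\Pi\langle i\rangle=\supp(\beta_i)$, and in particular the supports meet and one contains the other. Conversely, suppose $\supp(\beta_i)\cap\supp(\beta_j)\ne\varnothing$ but $\beta_i,\beta_j$ are incomparable. Pick the minimal element $\beta_k$ of $\eus K$ (in the root order) that lies above \emph{both} in the parent tree — equivalently, the deepest node whose associated $\Pi\langle k\rangle$ contains both $\supp(\beta_i)$ and $\supp(\beta_j)$; such a node exists and is unique by (3)--(4) (for instance $\beta_1$ works, so the set of common ancestors is a nonempty chain). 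At the stage where we process $(\Delta\langle k\rangle,\beta_k)$, the system $\Delta\langle k\rangle_{\beta_k}$ breaks into disjoint connected components $\Pi\langle k'\rangle$, and $\beta_i$ (resp. $\beta_j$) descends from distinct such components $\Pi\langle k_i\rangle\ne\Pi\langle k_j\rangle$ (distinct by minimality of $\beta_k$). But distinct connected components of $\Pi\langle k\rangle_{\beta_k}$ are disjoint \emph{and} unlinked in the Dynkin diagram, so $\supp(\beta_i)\subset\Pi\langle k_i\rangle$ and $\supp(\beta_j)\subset\Pi\langle k_j\rangle$ are disjoint, a contradiction. This forces comparability; moreover the direction of comparability then yields the containment of supports claimed.

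For (5), one direction is immediate: if $\ap\in\Phi(\beta_i)=\Pi\cap\eus H_{\beta_i}$, then $\ap$ is a root of the Heisenberg algebra $\h_i\subset\g\langle i\rangle$ whose centre is $\g_{\beta_i}$, so $(\ap,\beta_i)>0$ because every root $\gamma$ of a Heisenberg subset $\eus H_{\beta_i}$ satisfies $(\gamma,\beta_i)>0$ (indeed $\beta_i-\gamma\in\Delta^+\cup\{0\}$ but $\beta_i+\gamma\notin\Delta$ by the Heisenberg structure, whence $(\gamma,\beta_i^\vee)=1$ or $2$). For the converse, fix $\ap\in\Pi$ with $(\ap,\beta_i)>0$; by \eqref{eq:decomp-Delta} we have $\ap\in\eus H_{\beta_k}$ for a \emph{unique} $k$, i.e. $\ap\in\Phi(\beta_k)$, so it suffices to show $k=i$. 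From the direction already proved, $(\ap,\beta_k)>0$, so $\ap\in\supp(\beta_k)\cap\supp(\beta_i)$ (using $\supp(\ap)=\{\ap\}$ and that $(\ap,\beta)>0$ forces $\ap\in\supp(\beta)$ for $\beta\in\Delta^+$), hence by (2) the elements $\beta_i$ and $\beta_k$ are comparable with one support inside the other. If $\beta_k\prec\beta_i$ strictly, then $\Pi\langle k\rangle\subsetneq\Pi\langle i\rangle$ and, by the recursive construction, $\Pi\langle k\rangle\subset\Pi_{\beta_i}=\{\gamma\in\Pi\mid(\gamma,\beta_i)=0\}$ up to the initial step; more precisely the passage from $\beta_i$ to its descendants replaces $\Delta\langle i\rangle$ by $(\Delta\langle i\rangle)_{\beta_i}$, so any $\ap\in\Pi\langle k\rangle$ with $k$ a proper descendant satisfies $(\ap,\beta_i)=0$, contradicting $(\ap,\beta_i)>0$. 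Symmetrically $\beta_i\prec\beta_k$ is impossible. Hence $\beta_i=\beta_k$ and $\ap\in\Phi(\beta_i)$.

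The main obstacle is the careful bookkeeping in the converse of (5): one must track how orthogonality to $\beta_i$ is inherited through the recursion $\Delta\langle i\rangle\leadsto(\Delta\langle i\rangle)_{\beta_i}$, i.e. that every simple root in a \emph{proper} descendant component of $\beta_i$ is orthogonal to $\beta_i$, while the (at most two) simple roots in $\Phi(\beta_i)$ itself are exactly the ones with positive inner product. This is ultimately the statement that $\Pi\langle i\rangle=\Phi(\beta_i)\sqcup\bigsqcup(\text{supports of proper descendants of }\beta_i)$ together with the orthogonality relation $(\Pi\langle j\rangle,\beta_i)=0$ for $\beta_j$ a descendant, both of which are built into the construction of the cascade but deserve to be spelled out; alternatively, for a uniform check one may simply read off (5) from the explicit Hasse diagrams and $\Phi$-data in Section~\ref{sect:tables}.
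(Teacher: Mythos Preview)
Your proposal is correct and follows exactly the approach the paper intends: the paper gives no formal proof of this lemma at all, stating just before it that these properties ``either are already explained above or easily follow from the construction of $\eus K$''. Your argument simply spells out that construction-based reasoning in detail (and your treatment of the converse in (5), including the ``symmetric'' case $\beta_i\prec\beta_k$ via $\ap\in\Phi(\beta_k)\subset\Pi\langle k\rangle\subset(\Pi\langle i\rangle)_{\beta_i}$ hence $(\ap,\beta_i)=0$, is sound).
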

Clearly, $\#\eus K\le \rk\g$ and  the equality holds if and only if each $\beta_i$ is a multiple of a fundamental weight for $\g\lg i\rg$. Recall that $\theta$ is a multiple of a fundamental weight of $\g$ if and only if $\g$ is not of type $\GR{A}{n}$, $n\ge 2$.
It is well known that the following conditions are equivalent:
{\sl (1)} $\ind\be=0$; {\sl (2)} $\#\eus K=\#\Pi$, see e.g.~\cite[Prop.\,4.2]{ap97}. 
This happens exactly if $\g\not\in\{\GR{A}{n}, \GR{D}{2n+1},
\GR{E}{6}\}$. In this case, $\Phi^{-1}$ yields a bijection between $\eus K$ and $\Pi$.

\subsection{Optimal parabolic subalgebras and nilradicals}
\label{subs:optim}
Let $\p\supset\be$ be a standard parabolic subalgebra of $\g$, with nilradical $\n=\p^{\sf nil}$.
Then $\Delta(\n)$ stands for the set of (positive) roots corresponding to $\n$.
If $\Pi\cap\Delta(\n)=T$, then we also write $\n=\n_T$ and $\p=\p_T$. In this case,
$\Pi\setminus T$ is the set of simple roots for the standard Levi subalgebra of $\p_T$.
Hence $T\ne \varnothing$ if and only if $\p_T$ is a proper parabolic subalgebra of $\g$. 

We will be interested in the elements of the Kostant cascade $\eus K$ contained in $\Delta(\n_T)$.
Set $\eus K_T=\eus K\cap \Delta(\n_T)$. Clearly, $\theta=\beta_1\in \eus K_T$ for any proper parabolic 
subalgebra.

\begin{lm}    \label{lm:optim}
For any $T\subset\Pi$, one has 
\begin{enumerate}
\item $\eus K_T$ is an upper ideal of\/ $(\eus K,\curle)$;
\item $T\subset\bigcup_{\beta_j\in \eus K_T} \Phi(\beta_j)$ and\/ 
$\n_T\subset\bigoplus_{\beta_j\in \eus K_T} \h_{j}$.
\end{enumerate}
\end{lm}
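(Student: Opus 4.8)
The plan is to prove both assertions by reducing everything to the cascade decomposition \eqref{eq:decomp-Delta}. The crucial input is that $\Delta(\n_T)$ is an \emph{upper ideal} of $(\Delta^+,\curle)$ and that $T=\Pi\cap\Delta(\n_T)$.

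\textbf{Part (1): $\eus K_T$ is an upper ideal of $(\eus K,\curle)$.} By Lemma~\ref{lm:K-svojstva}(1), the order on $\eus K$ is the restriction of $\curle$ on $\Delta^+$. So let $\beta_i\in\eus K_T$ and suppose $\beta_i\curle\beta_k$ with $\beta_k\in\eus K$; I must show $\beta_k\in\eus K_T$, i.e., $\beta_k\in\Delta(\n_T)$. But $\beta_i\in\Delta(\n_T)$ and $\Delta(\n_T)$ is an upper ideal of $(\Delta^+,\curle)$ (this is exactly the statement, recalled in Subsection~\ref{subs:root-order}, that $\n_T$ is a $\be$-ideal of $\ut$). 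Since $\beta_i\curle\beta_k$ in $\Delta^+$, we get $\beta_k\in\Delta(\n_T)\cap\eus K=\eus K_T$. This is immediate; no obstacle here.

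\textbf{Part (2): the inclusions.} For the root-space inclusion, by \eqref{eq:decomp-Delta} every root $\gamma\in\Delta^+$ lies in a unique $\eus H_{\beta_j}$ with $\beta_j\in\eus K$; write $j=j(\gamma)$. The key claim is: if $\gamma\in\Delta(\n_T)$ then $\beta_{j(\gamma)}\in\eus K_T$. Granting this, $\Delta(\n_T)\subset\bigsqcup_{\beta_j\in\eus K_T}\eus H_{\beta_j}$, which is the root-set form of $\n_T\subset\bigoplus_{\beta_j\in\eus K_T}\h_j$. To prove the claim, note $\eus H_{\beta_j}\subset\Delta\lg j\rg^+$, so $\gamma\in\eus H_{\beta_j}$ gives $\gamma\curle\beta_j$ (the highest root of $\Delta\lg j\rg^+$); indeed $\supp(\gamma)\subseteq\supp(\beta_j)=\Pi\lg j\rg$ and the coefficient comparison holds because $\beta_j$ dominates every positive root of $\Delta\lg j\rg$. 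Hence $\gamma\curle\beta_j$ in $\Delta^+$, and since $\gamma\in\Delta(\n_T)$ and $\Delta(\n_T)$ is an upper ideal, $\beta_j\in\Delta(\n_T)\cap\eus K=\eus K_T$. For the inclusion $T\subset\bigcup_{\beta_j\in\eus K_T}\Phi(\beta_j)$: take $\ap\in T=\Pi\cap\Delta(\n_T)$. Apply the root-space claim with $\gamma=\ap$: there is $\beta_j\in\eus K_T$ with $\ap\in\eus H_{\beta_j}$, so $\ap\in\Pi\cap\eus H_{\beta_j}=\Phi(\beta_j)$ by definition. This gives $\ap\in\bigcup_{\beta_j\in\eus K_T}\Phi(\beta_j)$, as required.

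\textbf{Main obstacle.} The only point needing a little care is the assertion $\gamma\in\eus H_{\beta_j}\ \Rightarrow\ \gamma\curle\beta_j$. It is not literally the statement ``$\beta_j$ is the highest root of $\Delta\lg j\rg$'' applied to an arbitrary root, since $\curle$ compares coefficients over all of $\Pi$, not just over $\Pi\lg j\rg$; but because $\supp(\gamma)\subseteq\Pi\lg j\rg$ the extra coordinates are zero on both sides, so the comparison reduces to the one inside the root lattice of $\Delta\lg j\rg$, where $\beta_j$ is dominant and hence maximal. One must also recall from Subsection~\ref{subs:MCP} that $\Delta\lg i\rg$ is a root subsystem of $\Delta$ with $\Pi\lg i\rg=\Pi\cap\Delta\lg i\rg^+$ a \emph{base} of it, which guarantees that the root order restricted from $\Delta^+$ to $\Delta\lg i\rg^+$ coincides with the intrinsic root order of $\Delta\lg i\rg^+$. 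With that in hand, both inclusions in (2) follow formally, so the whole lemma is essentially a bookkeeping consequence of \eqref{eq:decomp-Delta} together with the upper-ideal property of $\Delta(\n_T)$.
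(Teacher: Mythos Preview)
Your proof is correct and follows exactly the approach indicated in the paper: the paper's own proof is the single line ``Use Lemma~\ref{lm:K-svojstva}(1) and the fact that $\Delta(\n_T)$ is an upper ideal of $(\Delta^+,\curle)$,'' and what you have written is precisely the natural unpacking of that hint, including the key observation for Part~(2) that $\gamma\in\eus H_{\beta_j}$ implies $\gamma\curle\beta_j$ so that the upper-ideal property of $\Delta(\n_T)$ forces $\beta_j\in\eus K_T$.
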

\begin{proof}
Use Lemma~\ref{lm:K-svojstva}(1) and the fact that $\Delta(\n_T)$ is an upper ideal of 
$(\Delta^+,\curle)$. 
\end{proof}
\noindent
Following Joseph~\cite[4.10]{jos77}, a standard parabolic subalgebra $\p_T$  
is said to be {\it optimal} if, in our notation,
\[
     T=\bigcup_{\beta_j\in \eus K_T} \Phi(\beta_j).
\]
We also apply this term to $\n_T$. Clearly, $\n_T$ is optimal if and only if 
$\n_T=\bigoplus_{\beta_j\in \eus K_T} \h_{j}$. This also suggests the following construction. Given any 
$T\subset\Pi$, set $\tilde T=\bigcup_{\beta_j\in \eus K_T} \Phi(\beta_j)$ and consider the nilradical 
$\n_{\tilde T}$. Then $\eus K_T=\eus K_{\tilde T}$ and 
$\n_T\subset\n_{\tilde T}=\bigoplus_{\beta_j\in \eus K_T} \h_{i}$. Hence $\n_{\tilde T}$ is optimal, 
it is the minimal optimal nilradical containing $\n_T$, and it is the maximal element of the set
of nilradicals $\{\n'\mid \Delta(\n')\cap\eus K=\eus K_T\}$.

\begin{df}
The nilradical $\n_{\tilde T}$ is called the {\it optimisation} of $\n_T$. 
\end{df}
\noindent 
If $T\subset \Pi$ is not specified for a given $\n$, then we write $\eus K(\n):=\eus K\cap\Delta(\n)$ and 
$\tilde\n$ stands for the optimisation of $\n$. The optimal nilradical $\n_{\tilde T}$ is also denoted by 
$\widetilde{\n_T}$. The benefit of optimisation is that the passage from $\n$ to $\tilde\n$  does not 
change the upper ideal $\eus K(\n)$ and the number $\bb(\n)$.
There is also a simple relation between $\ind\n$ and $\ind{\tilde\n}$. The following is 
essentially contained in~\cite{jos77}.

\begin{prop}   \label{prop:jos}
Let\/ $\tilde\n$ be the optimisation of a nilradical\/ $\n$. Then 
\[
     \dim\n +\ind\n=\dim\tilde\n + \# \eus K(\n) .
\]
\end{prop}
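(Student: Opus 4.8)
The plan is to reduce the claimed identity to Joseph's formula for the index of a nilradical together with the basic dimension count for the optimisation. First I would recall Joseph's result (from \cite{jos77}) that for any standard nilradical $\n=\n_T$ one has $\ind\n = \#\Pi - \#\eus K_T$ actually wait---let me be careful, the correct statement is that $\dim\n + \ind\n$ equals $\dim\n_{\tilde T}+\#\eus K_T$ only after we know how $\bb$ behaves; so instead I would directly invoke the two ingredients that are already in place in the excerpt. The key point recorded above is that the optimisation $\tilde\n = \n_{\tilde T}$ satisfies $\eus K(\n)=\eus K(\tilde\n)$ and $\bb(\n)=\bb(\tilde\n)$; the latter is exactly the assertion ``the passage from $\n$ to $\tilde\n$ does not change the number $\bb(\n)$'' stated just before the proposition. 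Writing $\bb(\q)=(\dim\q+\ind\q)/2$, the equality $\bb(\n)=\bb(\tilde\n)$ is equivalent to
\[
  \dim\n + \ind\n = \dim\tilde\n + \ind\tilde\n .
\]
So the whole proof reduces to computing $\ind\tilde\n$ for the \emph{optimal} nilradical $\tilde\n$, and showing it equals $\#\eus K(\n)=\#\eus K_T$.

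Second I would compute $\ind\tilde\n$. Since $\tilde\n$ is optimal, $\tilde\n=\bigoplus_{\beta_j\in\eus K_T}\h_j$, a direct sum (as a $\te$-module, and in fact the Heisenberg pieces attached to the elements of the upper ideal $\eus K_T$ assemble into this Lie algebra). Joseph's formula for the index of a nilradical, stated in \cite[4.10 and the surrounding results]{jos77}, gives $\ind\n_{T'}$ in terms of the Kostant cascade; for an optimal $\n_{T'}$ the formula collapses to $\ind\n_{T'}=\#(\eus K\cap\Delta(\n_{T'}))$. Intuitively, each Heisenberg summand $\h_j$ has a one-dimensional centre spanned by $e_{\beta_j}$, the roots $\beta_j$ are strongly orthogonal, and these centre lines survive in a generic coadjoint stabiliser; conversely the ``symplectic'' part of each $\h_j$ pairs nondegenerately off, so the index is accounted for entirely by the $\#\eus K_T$ central directions. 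Plugging $\ind\tilde\n=\#\eus K_T=\#\eus K(\n)$ into the displayed equation yields
\[
  \dim\n+\ind\n=\dim\tilde\n+\#\eus K(\n),
\]
which is the proposition.

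Third, a small amount of bookkeeping: I should make explicit that $\#\eus K(\n)=\#\eus K_T$ by definition of $\eus K(\n)$, and that $\eus K_{\tilde T}=\eus K_T$ so that the count of cascade roots is the same whether taken in $\n$ or $\tilde\n$ (this is Lemma~\ref{lm:optim} together with the observation preceding the definition of optimisation). I would also remark that the case $T=\varnothing$ is vacuous (no proper parabolic), and when $\n$ is already optimal the identity is the trivial $\dim\n+\ind\n=\dim\n+\#\eus K(\n)$, i.e.\ it is just Joseph's index formula in that case.

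The main obstacle is pinning down and citing precisely the right form of Joseph's index formula for optimal nilradicals, namely $\ind\tilde\n=\#\eus K(\n)$: \cite{jos77} works with a somewhat different combinatorial setup (reverse order on $\eus K$, ``parabolic subsets''), and one must translate his statement into the present notation and check that optimality is exactly the condition under which all the correction terms in his formula vanish. Equivalently, one can bypass the literal citation by constructing a regular functional $\xi\in\tilde\n^*$ with $\xi=\sum_{\beta_j\in\eus K_T} e_{\beta_j}^*$ (restricted suitably) and computing $\dim\tilde\n^\xi$ directly using strong orthogonality of the $\beta_j$ and the direct-sum decomposition $\tilde\n=\bigoplus\h_j$; verifying that this $\xi$ is generic and that the stabiliser has dimension exactly $\#\eus K_T$ is the one genuinely technical step, but it is standard Heisenberg-algebra linear algebra once the decomposition is in hand. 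Everything else is formal manipulation of $\bb(\n)=\bb(\tilde\n)$.
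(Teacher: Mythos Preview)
Your argument is circular. You invoke $\bb(\n)=\bb(\tilde\n)$ as if it were already established, citing the sentence just before the proposition. But that sentence is an \emph{announcement} of what is about to be proved, not an independent fact: in the paper, $\bb(\n)=\bb(\tilde\n)$ is precisely Corollary~\ref{cor:jos}, which is deduced \emph{from} Proposition~\ref{prop:jos}. So you are using the corollary to prove the proposition from which it is derived. Your step ``$\ind\tilde\n=\#\eus K(\n)$ for optimal $\tilde\n$'' can indeed be obtained independently (via the cascade functional you describe, or by specialising Joseph's formula), but the other ingredient, $\bb(\n)=\bb(\tilde\n)$, cannot: to get it you would need Joseph's index formula for the \emph{non-optimal} $\n$, and that is exactly the content of the proposition itself. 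The explicit regular element $\sum_{\beta\in\eus K(\n)} e_{-\beta}$ does not rescue the argument, because checking it is regular in $\n^*$ and computing its stabiliser in $\n$ (rather than in $\tilde\n$) is again the general Joseph computation.

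The paper's proof avoids this loop by citing Joseph's result in its original form. Joseph~\cite[Prop.~2.6]{jos77} works not with $\tilde\n$ but with the solvable ideal $\ff=\ff_\n\lhd\be$, the sum of $\tilde\n$ and the span of $\{[e_\beta,e_{-\beta}]\mid\beta\in\eus K(\n)\}\subset\te$, and proves directly that $\dim\ff=\dim\n+\ind\n$. Since by construction $\dim\ff=\dim\tilde\n+\#\eus K(\n)$, the proposition is immediate. Only \emph{afterwards} does one specialise to $\n=\tilde\n$ to read off $\ind\tilde\n=\#\eus K(\n)$ and hence $\bb(\n)=\bb(\tilde\n)$; your proposal has this logical flow reversed.
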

\begin{proof}
In place of our $\tilde\n$, Joseph works with the solvable ideal $\ff=\ff_\n\lhd \be$ that is the sum of
$\tilde\n$ and the $\BC$-linear span of $\{[e_\beta,e_{-\beta}] \mid \beta\in \eus K(\n)\}$ in $\te$.
And it is proved in \cite[Proposition\,2.6]{jos77} that $\dim\ff=\dim\n+\ind\n$.
\end{proof}
\begin{cl}     \label{cor:jos}
$\ind\tilde\n=  \# \eus K(\tilde\n)= \# \eus K(\n)$ and 
$\bb(\n)=\bb(\tilde\n)$.
\end{cl}
\begin{proof}
For $\tilde\n=\n$,  we obtain $\ind\tilde\n=\# (\eus K(\n))$. Then the formula Proposition~\ref{prop:jos}
reads $\dim\n+\ind\n=\dim\tilde\n+\ind\tilde\n$.
\end{proof}

\begin{ex}          \label{ex:sln}
(1) Let $\g=\slno$ with $\Delta^+=\{\esi_i-\esi_j\mid 1\le i <j \le n+1\}$ and $\ap_i=\esi_i-\esi_{i+1}$ for
$i=1,2,\dots,n$. Then $\theta=\varpi_1+\varpi_n=\esi_1-\esi_{n+1}$ and 
$\eus K=\{\beta_1,\dots,\beta_{[n+1/2]}\}$, where $\beta_i=\esi_i-\esi_{n+2-i}$. 
Here $(\eus K\curle)$ is a chain, i.e., $\beta_i\succ \beta_{i+1}$ for all $i$,  and 
$\Phi(\beta_i)=\{\ap_i, \ap_{n+1-i}\}$ (cf. Figure~\ref{fig:An} in Section~\ref{sect:tables}). 

(2) Take $T=\{\ap_2,\ap_6\}$ and the  
nilradical $\n_T\subset \mathfrak{sl}_7$. Then
$\eus K_T=\{\beta_1,\beta_2\}$ and therefore $\tilde T=\{\ap_1,\ap_2,\ap_5,\ap_6\}$, cf. the 
matrices below.
\begin{center}
\raisebox{12ex}{$\n=\n_T=$} \ 
\begin{tikzpicture}[scale= .62]
\draw (0,0)  rectangle (7,7);
\draw[dashed,magenta]  (7,0) -- (0,7);
\path[draw,fill=brown!20]  (2,7) -- (7,7) -- (7,1) -- (6,1) -- (6,5)--(2,5)--cycle ;

\path[draw, line width=1pt]  (1.5,6.5)--(6.4,6.5);
\path[draw, line width=1pt]  (6.5,1.5)--(6.5,6.4);

\path[draw, line width=1pt]  (2.5,5.5)--(5.4,5.5);
\path[draw, line width=1pt]  (5.5,2.5)--(5.5,5.4);

\path[draw, line width=1pt]  (3.5,4.5)--(4.4,4.5);
\path[draw, line width=1pt]  (4.5,3.5)--(4.5,4.4);

\foreach \x in {9,11,13}  \shade[ball color=red] (\x/2,\x/2) circle (2mm);
\path[draw,dashed]  (0,6) -- (7,6); 
\path[draw,dashed]  (1,5) -- (7,5); 
\path[draw,dashed]  (2,4) -- (7,4); 
\path[draw,dashed]  (3,3) -- (7,3); 
\path[draw,dashed]  (4,2) -- (7,2); 
\path[draw,dashed]  (5,1) -- (7,1); 

\path[draw,dashed]  (6,0) -- (6,7); 
\path[draw,dashed]  (5,1) -- (5,7); 
\path[draw,dashed]  (4,2) -- (4,7); 
\path[draw,dashed]  (3,3) -- (3,7); 
\path[draw,dashed]  (2,4) -- (2,7); 
\path[draw,dashed]  (1,5) -- (1,7); 
\end{tikzpicture}
\quad \raisebox{12ex}{$\mapsto \ \tilde\n= \n_{\tilde T}=$} \ 
\begin{tikzpicture}[scale= .62]
\draw (0,0)  rectangle (7,7);
\draw[dashed,magenta]  (7,0) -- (0,7);
\path[draw,fill=brown!20]  (2,6)--(1,6)--(1,7)--(7,7)--(7,1) -- (6,1) -- (6,2)--(5,2)--(5,5)--(2,5)--cycle ;

\foreach \x in {9,11,13}  \shade[ball color=red] (\x/2,\x/2) circle (2mm);
\path[draw,dashed]  (0,6) -- (7,6); 
\path[draw,dashed]  (1,5) -- (7,5); 
\path[draw,dashed]  (2,4) -- (7,4); 
\path[draw,dashed]  (3,3) -- (7,3); 
\path[draw,dashed]  (4,2) -- (7,2); 
\path[draw,dashed]  (5,1) -- (7,1); 

\path[draw,dashed]  (6,0) -- (6,7); 
\path[draw,dashed]  (5,1) -- (5,7); 
\path[draw,dashed]  (4,2) -- (4,7); 
\path[draw,dashed]  (3,3) -- (3,7); 
\path[draw,dashed]  (2,4) -- (2,7); 
\path[draw,dashed]  (1,5) -- (1,7); 
\end{tikzpicture}
\end{center}

\noindent
The cells with ball represent the cascade and the thick lines depict the Heisenberg subset attached to an 
element of the cascade. By Proposition~\ref{prop:jos}, we have $\ind\tilde\n=2$, $\ind\n=6$,  and
$\bb(\n)=\bb(\tilde\n)=10$. Then $\n_{\{\ap_2\}}$, $\tilde\n\cap\n_{\{\ap_3\}}$, $\tilde\n\cap\n_{\{\ap_4\}}$,
and $\n_{\{\ap_5\}}$  are the \CP-ideals for $\tilde\n$, while the only \CP-ideal for $\n$ is $\n_{\{\ap_2\}}$. 

(3) If $T=\{\ap_k\}$ for $\g=\slno$ and $k\le (n+1)/2$, then 
$\eus K\cap\Delta(\n_{\{\ap_k\}})=\{\beta_1,\dots,\beta_k\}$. Hence
$\tilde T=\{\ap_1,\dots,\ap_k,\ap_{n+1-k},\dots,\ap_n\}$. Since $\n_{\{\ap_k\}}$ is abelian, one has
$\ind\n_{\{\ap_k\}}=\dim\n_{\{\ap_k\}}=k(n+1-k)$, which comply with Proposition~\ref{prop:jos}.
\end{ex}

\begin{ex}   \label{ex:E6}
Let $\g$ be of type $\GR{E}{6}$ and $T=\{\ap_2\}$. Then $\n_T$ is the nilradical of a parabolic subalgebra $\p_T$ whose Levi subalgebra is of semisimple type $\GR{A}{1}+\GR{A}{4}$. Using data
presented in Section~\ref{sect:tables} (especially Figure~\ref{fig:En}), we see that 
$\eus K_T=\{\beta_1,\beta_2,\beta_3\}$ and $\tilde T=\Pi\setminus\{\ap_3\}$. Therefore,
\[
  \dim\n_T=25, \ \dim\n_{\tilde T}=35, \ \ind \n_{\tilde T}=3, \ \ind\n_T=13.
\]
Our main result below implies that $\n_T$ and $\n_{\tilde T}$ do not have a \CP. In fact, 
explicit computations with $\Delta(\n_T)$ show that the maximal dimension of an abelian $\be$-ideal
in $\n$ equals 15, while $\bb(\n_T)=\bb(\n_{\tilde T})=19$.
\end{ex}
\begin{rmk}   \label{rem:reduct-g}
The cascade can be defined for an arbitrary reductive algebraic Lie algebra $\h$. Namely,
$\eus K(\h)=\eus K([\h,\h])$ and if $[\h,\h]=\g_1\dotplus\dots\dotplus\g_s$ (the sum of simple ideals),
then $\eus K(\h)=\bigsqcup_{i=1}^s \eus K(\g_i)$. One may encounter this situation after the first
step in constructing $\eus K(\g)$ for a simple $\g$. Here $\eus K(\g)\setminus\{\theta\}=\eus K(\g_\theta)$ 
is the cascade for the (possibly non-simple) Lie algebra $\g_\theta\subset \g$. 
This general notion allows to provide another interpretation of optimality.
Let $\el$ be the standard Levi subalgebra of a standard parabolic subalgebra $\p\subset\g$. Then 
$\p$ is optimal if and only if $\eus K(\el)\subset \eus K(\g)$.
\end{rmk}

\section{On nilradicals with commutative polarisation}
\label{sect:CP-da}

\noindent
In this section, we describe a class of nilradicals that have a \CP \ and point out a \CP-ideal for each nilradical in this class. Afterwards, we prove in Section~\ref{sect:ohne-CP} that this class contains actually all nilradicals with \CP.

First, we observe that any simple Lie algebra $\g$ has a special parabolic subalgebra whose nilradical 
has a \CP. (For some $\g$, it appears to be the only nilradical with \CP, but this will become clear in
Section~\ref{sect:ohne-CP}.) Recall that $\n_\theta$ is a Heisenberg Lie algebra, which is also denoted 
$\h_1$ in the notation related to the cascade $\eus K$. We say that $\n_\theta$ is the {\it Heisenberg nilradical} (associated with $\g$). Here 
\[
    \Delta(\n_\theta)=\eus H_\theta=\{\gamma\in\Delta^+\mid (\gamma,\theta^\vee)=1\}\cup\{\theta\} .
\]
Since $\n_\theta$ is optimal and $\eus K\cap\eus H_\theta=\{\theta\}$, it follows from 
Corollary~\ref{cor:jos} that $\ind\n_\theta=1$. (Of course, this is easy to prove directly!) Another 
standard fact is that $\dim\n_\theta=2\mathsf{h}^*-3$, where $\mathsf{h}^*=\mathsf{h}^*(\g)$ is the 
{\it dual Coxeter number} of $\g$, see e.g.~\cite[Prop.\,1.1]{ko12}. Therefore 
$\bb(\n_\theta)=\mathsf{h}^*-1$. We say that $\gamma\in\Delta$ is {\it long}, if $(\gamma,\gamma)=(\theta,\theta)$.

\begin{prop}    \label{prop:heis}
The Heisenberg nilradical has a \CP, and the number of\/ \CP-ideals in $\n_\theta$ equals the number of long simple roots in $\Pi$. 
\end{prop}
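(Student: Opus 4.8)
The plan is to exhibit an explicit \CP-ideal and then count all of them. First I would recall that an abelian ideal $\ah\lhd\be$ contained in $\n_\theta$ with $\Delta(\ah)\subset\eus H_\theta$ is automatically abelian provided $\theta\notin\Delta(\ah)$ (since the only sum of two roots in $\eus H_\theta$ that could lie in $\Delta^+$ is $\theta$, as $(\gamma,\theta^\vee)=1$ for $\gamma\in\eus H_\theta\setminus\{\theta\}$). So a \CP-ideal $\ah$ of $\n_\theta$ must have $\dim\ah=\bb(\n_\theta)=\hd^*-1$ and hence $\dim(\n_\theta/\ah)=(\dim\n_\theta)-\bb(\n_\theta)=(2\hd^*-3)-(\hd^*-1)=\hd^*-2$; since $\theta\in\Delta(\n_\theta/\ah)$ necessarily (any $\be$-ideal inside $\n_\theta$ and not equal to it omits a minimal root, but the only way to stay abelian and be large is to omit exactly a ``half'' of $\eus H_\theta\setminus\{\theta\}$ together with $\theta$). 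The Kirillov form of a regular $\xi\in\n_\theta^*$ is, up to scalar, the symplectic pairing $e_\gamma\wedge e_{\theta-\gamma}\mapsto 1$ on $\eus H_\theta\setminus\{\theta\}$ modulo the centre $\g_\theta$; a \CP-ideal is then exactly a $\be$-stable Lagrangian subspace of this symplectic space, together with the line $\g_\theta$.

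Next I would translate ``$\be$-stable Lagrangian in $\eus H_\theta\setminus\{\theta\}$'' into a combinatorial statement about the poset $(\eus H_\theta\setminus\{\theta\},\curle)$. The map $\gamma\mapsto\theta-\gamma$ is an order-reversing involution on $\eus H_\theta\setminus\{\theta\}$ which is symplectic-dual, so a $\be$-stable Lagrangian is the same as an upper ideal $J$ of this poset with $J\sqcup(\theta-J)=\eus H_\theta\setminus\{\theta\}$, i.e. a ``midpoint'' upper ideal. The key structural fact (which I expect to be the crux) is that such midpoint upper ideals of $\eus H_\theta\setminus\{\theta\}$ are in bijection with the long simple roots $\ap\in\Pi$: namely, for a long simple root $\ap$, the roots $\gamma\in\eus H_\theta$ with $[\gamma:\ap]=1$ form such a $J$ (using that $[\theta:\ap]=2$ for every long simple root $\ap$ when $\g\ne\GR{A}{n}$, and that in type $\GR{A}{n}$ all simple roots are long, $\eus H_\theta\setminus\{\theta\}=\{\esi_1-\esi_j,\esi_i-\esi_{n+1}\}$ decomposes accordingly; note $[\theta/2]=0$ here precisely matches the absence of a unique maximal noncommutative root, but there are still $n$ choices). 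Conversely, a midpoint upper ideal $J$, being order-closed and complementary to its reflection, must be cut out by the vanishing of a single coordinate $[\gamma:\ap]$ for a suitable $\ap$; checking that $\ap$ is forced to be long and that distinct long roots give distinct $J$ completes the bijection.

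The main obstacle is the converse direction of that bijection: proving that every $\be$-stable Lagrangian subspace of $\eus H_\theta\setminus\{\theta\}$ arises from a long simple root, equivalently that the only midpoint upper ideals are the coordinate ones. I would handle this by a case-free argument using the description of $\Delta^+_{\sf com}$ recalled in \eqref{eq:non-com} and the fact that $\eus H_\theta$ is a ``height-one'' set of roots over the centre: the poset $\eus H_\theta\setminus\{\theta\}$ is a product-of-chains-like poset whose automorphisms respecting the involution are governed by the multiplicities $m_i=[\theta:\ap_i]$; since $\gamma\mapsto\theta-\gamma$ swaps $[\gamma:\ap_i]$ with $m_i-[\gamma:\ap_i]$, a complementary upper ideal must, on each coordinate, be a ``cut'' at level $1$, and that forces $m_i=2$, i.e. $\ap_i$ long. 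If a fully uniform argument proves awkward, I would fall back on inspecting the marked cascade poset tables of Section~\ref{sect:tables}, where $\Phi(\theta)=\Pi\cap\eus H_\theta$ is listed for each $\g$, and verify the count $\ap\mapsto J_\ap$ type by type; but I expect the coordinate-cut argument above to go through uniformly. Finally, I would note the ``vice versa'' half of condition (1) of the introduction follows immediately: $\ah\cap\n_\theta$ for a maximal abelian ideal $\ah$ of $\be$ is precisely one of these $J_\ap$ (enlarged by $\g_\theta$), since a maximal abelian ideal of $\be$ has dimension $\ge\bb(\n_\theta)$ intersected with $\n_\theta$ forcing equality.
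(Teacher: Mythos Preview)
Your reformulation of \CP-ideals in $\n_\theta$ as ``midpoint upper ideals'' of $\eus H_\theta\setminus\{\theta\}$ (upper ideals $J$ with $J\sqcup(\theta-J)=\eus H_\theta\setminus\{\theta\}$, then adjoining $\theta$) is correct and a natural starting point. However, two genuine errors derail the argument.

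First, your explicit construction of $J_\ap$ for a long simple root $\ap$ is wrong. You assert that $[\theta:\ap]=2$ for every long simple root when $\g\ne\GR{A}{n}$; this is false (e.g.\ in $\GR{D}{n}$ one has $[\theta:\ap_1]=[\theta:\ap_{n-1}]=[\theta:\ap_n]=1$, and in $\GR{B}{n}$ one has $[\theta:\ap_1]=1$). More seriously, your proposed set $J_\ap=\{\gamma\in\eus H_\theta\setminus\{\theta\}:[\gamma:\ap]=1\}$ is \emph{not} complementary to $\theta-J_\ap$ when $[\theta:\ap]=2$: since $[\theta-\gamma:\ap]=2-[\gamma:\ap]$, the involution $\gamma\mapsto\theta-\gamma$ then \emph{fixes} $J_\ap$ rather than exchanging it with its complement. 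Your construction works precisely when $[\theta:\ap]=1$, i.e.\ for $\ap\in\Pi_{\sf com}$, but there are in general long simple roots outside $\Pi_{\sf com}$ (e.g.\ $\ap_2,\dots,\ap_{n-1}$ in $\GR{B}{n}$). The correct $J_\ap$ for such $\ap$ is not cut out by a single coordinate; the paper describes it (Remark~\ref{rem:napomina-abelian}) via the inversion set of the minimal-length $w_\ap\in W$ with $w_\ap(\theta)=\ap$.

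Second, your converse ``coordinate-cut'' argument is not an argument: the claim that a complementary upper ideal must be a level-$1$ cut on some coordinate, forcing $m_i=2$, is neither precise nor true, and in any case ``$m_i=2$'' is not equivalent to ``$\ap_i$ long''. There is also a small slip earlier: $\theta$ lies in $\Delta(\ah)$ for every nonzero $\be$-ideal $\ah\subset\n_\theta$ (it is the unique maximal root), not in $\Delta(\n_\theta/\ah)$.

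The paper sidesteps this combinatorics entirely by quoting two external results: (i) any $\be$-ideal of $\n_\theta$ of dimension $\le\mathsf{h}^*-1$ is automatically abelian~\cite[Lemma~2.2]{p20b}, so one only has to count $\be$-ideals of that size; and (ii) the theory of abelian $\be$-ideals in $\n_\theta$ from~\cite{imrn} puts those of dimension $\mathsf{h}^*-1$ in bijection with minimal-length Weyl group elements sending $\theta$ to a (necessarily long) simple root. Your Lagrangian framework is a reasonable alternative viewpoint, but turning it into a proof requires the nontrivial count from~\cite{imrn} (or an equivalent); there is no elementary coordinate shortcut of the kind you sketch.
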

\begin{proof}
1) The Heisenberg Lie algebra $\n_\theta$ has a basis $x_1,\dots,x_k,y_1,\dots,y_k,z$ such that 
$k=\mathsf{h}^*-2$ and the only nonzero brackets are $[x_i,y_i]=z$. Then $\lg y_1,\dots,y_k,z\rg$ is 
an abelian ideal of dimension $\bb(\n_\theta)$ and there are plenty of such ideals. But the number of 
$\be$-ideals in $\n_\theta$ is finite and depends on the 
ambient Lie algebra $\g$.
\\
2) The dimension of a \CP-ideal in $\n_\theta$ is $\mathsf{h}^*-1$.
By~\cite[Lemma\,2.2]{p20b}, if $\ce\subset\n_\theta$ is a $\be$-ideal and $\dim\ce\le \mathsf{h}^*-1$,
then $\ce$ is abelian.  On the other hand, there is a well-developed theory of abelian $\be$-ideals in
$\n_\theta$~\cite{imrn}. By that theory, the abelian ideals of dimension $\mathsf{h}^*-1$ bijectively 
correspond to the elements of shortest length in $W$ taking $\theta$ to a simple root (necessarily long). 
\end{proof}

\begin{rmk}    \label{rem:napomina-abelian}
(1) \ Let us provide an explicit construction of the \CP-ideals in $\n_\theta$. Let $\Pi_l$ be the set of
long simple roots. If $\ap\in\Pi_l$, then there is a unique element of minimal length $w_{\ap}\in W$
such that $w_{\ap}(\theta)=\ap$~\cite[Theorem\,4.1]{imrn}. Then $\ell(w_{\ap})=\mathsf{h}^*-2$ and the \CP-ideal $\ah_\ap$ 
corresponding to $\ap$ (that is, the set of roots of $\ah_\ap$) is
\[
     \Delta(\ah_\ap)=\{\theta\}\cup\{\theta-\gamma\mid \gamma\in \eus N(w_{\ap})\},
\]
see~\cite[Lemma\,1.1]{mics}. Here $\eus N(w_{\ap}):=\{\nu\in\Delta^+\mid w_{\ap}(\nu)\in -\Delta^+\}$ is the {\it inversion set} of 
$w_\ap$.

(2) \ A related description of these \CP-ideals is the following. By~\cite[Corollary\,3.8]{imrn}, the maximal abelian ideals of $\be$ are parametrised by $\Pi_l$. If $\ah_{\ap,\sf max}\lhd\be$ denotes the maximal abelian ideal corresponding to $\ap\in\Pi_l$, then the related \CP-ideal of $\n_\theta$ is   
$\ah_\ap=\n_\theta\cap \ah_{\ap,\sf max}$.
\end{rmk}

For $\ap\in\Pi$, the nilradical $\nap$ is equal to the $\be$-ideal $\ce\lg\ap\rg$ defined in 
Section~\ref{subs:root-order}. Hence $\ap\in\Pi$ is commutative if and only if the nilradical $\nap$ is 
abelian. Set $\Pi_{\sf com}=\Pi\cap\Delta^+_{\sf com}$. It is easily seen that $\ap\in\Pi_{\sf com}$ if 
and only if $[\theta:\ap]=1$. Therefore, $\Pi_{\sf com}=\Pi$ for $\GR{A}{n}$, while 
$\Pi_{\sf com}=\varnothing$ for $\g$ of type $\GR{G}{2}$, $\GR{F}{4}$, and 
$\GR{E}{8}$. If $\ap\in\Pi_{\sf com}$, then it is always long and  $\nap$ is the maximal abelian ideal 
of $\be$ that corresponds to $\ap$. In particular, if $\g$ is of type $\GR{A}{n}$, then all maximal abelian 
ideals of $\be$ are the nilradicals $\nap$, $\ap\in\Pi$.

\begin{prop}      \label{prop:1}
Let $\n$ be a nilradical. \\  \indent
{\sf 1)} \  If\/ $\n\subset\widetilde{\nap}$ for some $\ap\in\Pi$, then the upper ideal $\eus K(\n)$ is a 
chain in $(\eus K,\curle)$. 
\\ \indent
{\sf 2)} \ If\/ $\n$ is optimal and $\n\subset\widetilde{\nap}$ for some $\ap\in\Pi_{\sf com}$, then $\n$ has 
a\/ \CP. More precisely, $\n\cap \n_{\{\ap\}}$ is a \CP-ideal in $\n$. In particular, $\nap$ is a \CP-ideal in\/ $\widetilde{\nap}$.
\end{prop}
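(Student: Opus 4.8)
### Proof proposal for Proposition~\ref{prop:1}

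The plan is to handle the two assertions separately, using the structural results on the cascade poset already established.

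\textbf{Part 1.} Assume $\n\subset\widetilde{\nap}$. By Lemma~\ref{lm:optim}(1), both $\eus K(\n)$ and $\eus K(\widetilde{\nap})=\eus K\cap\Delta(\nap)$ are upper ideals of $(\eus K,\curle)$, and the inclusion $\n\subset\widetilde{\nap}$ gives $\eus K(\n)\subset\eus K(\nap)$. Hence it suffices to show that $\eus K(\nap)=\eus K\cap I\lg\ap\rg$ is a chain, because an upper subideal of a chain is again a chain. Now $\gamma\in\Delta(\nap)$ means $[\gamma:\ap]\geqslant 1$, so every $\beta_i\in\eus K(\nap)$ has $\ap\in\supp(\beta_i)$. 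By Lemma~\ref{lm:K-svojstva}(2), any two elements of $\eus K$ whose supports both contain $\ap$ are comparable; thus $\eus K(\nap)$ is linearly ordered, and so is $\eus K(\n)$.

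\textbf{Part 2.} Now suppose in addition that $\n$ is optimal and $\ap\in\Pi_{\sf com}$. Since $\n$ is optimal, $\n=\bigoplus_{\beta_i\in\eus K(\n)}\h_i$ by the discussion in Section~\ref{subs:optim}, and $\ind\n=\#\eus K(\n)$ by Corollary~\ref{cor:jos}; write $r=\#\eus K(\n)$, so $\bb(\n)=\tfrac12(\dim\n+r)$. The candidate \CP-ideal is $\ca:=\n\cap\n_{\{\ap\}}$, i.e.\ the $\be$-ideal with root set $\Delta(\n)\cap\Delta(\nap)$. It is abelian: since $\ap\in\Pi_{\sf com}$, the ideal $\nap=\ce\lg\ap\rg$ is abelian, hence so is its subideal $\ca$. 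It remains to prove $\dim\ca=\bb(\n)$, equivalently $\dim\ca=\tfrac12(\dim\n+r)$, i.e.\ that $\ca$ accounts for exactly half of $\Delta(\n)$ plus half of $\eus K(\n)$.

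The key computation is to analyse, for each $\beta_i\in\eus K(\n)$, the intersection of the Heisenberg block $\eus H_{\beta_i}$ with $\Delta(\nap)$. Because $\ap\in\Pi_{\sf com}$ we have $[\theta:\ap]=1$, and since $\eus K(\n)$ is a chain (Part 1) with $\ap\in\supp(\beta_i)$ for all $i$, the coefficient $[\beta_i:\ap]$ equals $1$ for every $\beta_i\in\eus K(\n)$. One then shows: for $\gamma\in\eus H_{\beta_i}$ one has $[\gamma:\ap]\in\{0,1\}$, and $\gamma\mapsto \beta_i-\gamma$ (together with $\beta_i\mapsto\beta_i$) sets up a bijection between $\{\gamma\in\eus H_{\beta_i}: [\gamma:\ap]=1\}$ and $\{\gamma\in\eus H_{\beta_i}: [\gamma:\ap]=0\}\cup\{\beta_i\}$. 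Indeed $\beta_i$ is the highest root of the $\GR{A}{\,}$-type (when $\#\Phi(\beta_i)=2$) or the appropriate irreducible subsystem $\Delta\lg i\rg$, and $\eus H_{\beta_i}$ is its Heisenberg subset; the reflection/complementation $\gamma\mapsto\beta_i-\gamma$ within $\eus H_{\beta_i}$ is the standard involution on a Heisenberg subset fixing only $\beta_i$, and it toggles the $\ap$-coefficient between $0$ and $1$ precisely because $[\beta_i:\ap]=1$. Consequently $\#(\eus H_{\beta_i}\cap\Delta(\nap))=\tfrac12(\#\eus H_{\beta_i}+1)=\tfrac12(\dim\h_i+1)$. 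Summing over $\beta_i\in\eus K(\n)$ and using $\dim\n=\sum_i\dim\h_i$ and $r=\#\eus K(\n)$ gives $\dim\ca=\tfrac12(\dim\n+r)=\bb(\n)$, as required. Finally, $\nap$ itself is optimal with $\eus K(\nap)$ a chain, $\widetilde{\nap}\cap\n_{\{\ap\}}=\nap$, and the already-computed identity $\dim\nap=\bb(\widetilde{\nap})$ (which is Proposition~\ref{prop:jos} applied to $\nap$, since $\dim\nap+\ind\nap=2\dim\nap$ forces the balance) shows $\nap$ is a \CP-ideal in $\widetilde{\nap}$.

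\textbf{Main obstacle.} The delicate point is the combinatorial claim that the complementation map $\gamma\mapsto\beta_i-\gamma$ on each Heisenberg block $\eus H_{\beta_i}$ splits it into equal halves according to the $\ap$-coefficient, with the ``middle'' element $\beta_i$ paired against the empty-coefficient side. One must be careful that $[\beta_i:\ap]=1$ for every $\beta_i\in\eus K(\n)$ — this uses $\ap\in\Pi_{\sf com}$ crucially (for a non-commutative $\ap$ some $[\beta_i:\ap]$ could be $\geqslant 2$, breaking the $\{0,1\}$ dichotomy) — and that $\ap$ actually lies in the support of $\beta_i$ for each relevant $i$, which follows from $\eus K(\n)$ being a chain containing $\beta_1=\theta$ together with $\ap\in\Phi(\Phi^{-1}(\ap))$ and the inclusion $\n\subset\widetilde{\nap}$. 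I would verify the block-by-block count by exhibiting the involution explicitly on $\eus H_{\beta_i}\cong$ Heisenberg subset of $\Delta\lg i\rg^+$ and checking the $\ap$-coefficient toggles, treating the $\GR{A}{}$ and non-$\GR{A}{}$ cases for $\Delta\lg i\rg$ uniformly via $(\gamma,\beta_i^\vee)$.
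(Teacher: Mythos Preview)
Your argument is correct and follows essentially the same route as the paper: for Part~1 you use Lemma~\ref{lm:K-svojstva}(2) (overlapping supports force comparability) where the paper instead identifies $\eus K(\nap)=[\Phi^{-1}(\ap),\beta_1]_{\eus K}$ via Lemma~\ref{lm:K-svojstva}(4), and Part~2 is the identical Heisenberg-block count based on $[\beta_i:\ap]=1$ and the pairing $\gamma\leftrightarrow\beta_i-\gamma$. One small slip: in your last sentence you assert ``$\nap$ itself is optimal,'' which is generally false (e.g.\ $\ap=\ap_1$ in $\slno$); the ``in particular'' clause follows immediately by specialising your main computation to $\n=\widetilde{\nap}$ and noting $\widetilde{\nap}\cap\nap=\nap$, so this misstatement is immaterial.
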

\begin{proof} 
{\sf 1)} Recall that $\eus K(\n)=\eus K\cap\Delta(\n)$ and $\eus K(\n)=\eus K(\tilde\n)$.
Set $\beta=\Phi^{-1}(\ap)$. Then 
\[
    \eus K(\n_{\{\ap\}})=\{\beta_i\in\eus K\mid\beta_i\curge\beta\}=[\beta,\beta_1]_{\eus K}
\] 
is the chain connecting $\beta$ and $\theta=\beta_1$ in $\eus K$ (cf. Lemma~\ref{lm:K-svojstva}(4)). 
Therefore, $\eus K(\n)=[\beta_k,\beta_1]_{\eus K}$ is also a (possibly shorter) chain for some
$\beta_k$ between $\beta$ and $\beta_1$. 

{\sf 2)} Let $\eus K(\n)=[\beta_k,\beta_1]_{\eus K}$. 
For simplify notation, we arrange the numbering of $\eus K$ such that 
$[\beta_k,\beta_1]_{\eus K}=\{\beta_1,\beta_2,\dots,\beta_k\}$. Since $\n$ is optimal, we have 
$\n=\bigoplus_{i=1}^k \h_i$  and $\beta_k,\dots,\beta_1\in \Delta(\nap)$. Hence $[\beta_j:\ap]=1$ for 
$j=1,\dots,k$. Since $\h_j$ is a Heisenberg Lie algebra, $\eus H_{\beta_j}\setminus \{\beta_j\}$ consists of the pairs of roots of the form 
$\{\beta_j-\gamma, \gamma\}$ and hence $\dim (\nap\cap\h_j)=(\dim\h_j+1)/2$. Therefore,
\[
   \dim (\n\cap\nap)=\sum_{j=1}^k \frac{\dim\h_j+1}{2}=\frac{\dim\n+k}{2}=\bb(\n) ,
\]
where we use the fact that $\ind\n=k$, cf. Corollary~\ref{cor:jos}. Thus, $\n\cap\nap$ is a \CP-ideal in 
$\n$. Taking $\n=\widetilde{\nap}$ yields the last assertion.
\end{proof}

\begin{rmk}
For $\g=\slno$, we have $\Pi=\Pi_{\sf com}$ and $\eus K$ is a chain. Therefore, it can happen that
there are several $\ap\in\Pi$ such that $\n=\tilde\n\subset\widetilde\nap$. Then each choice of $\ap$
provides a \CP-ideal in $\tilde\n$, cf. Example~\ref{ex:sln}(2). But this is a specific feature of $\slno$.
\end{rmk}

Next step is to consider the nilradicals inside $\widetilde\nap$ that are not necessarily optimal.
Recall that, for any $\beta\in\eus K$, one has $\#\Phi(\beta)\le 2$. Therefore, 
$\# \Phi(\Phi^{-1}(\ap))\in\{1,2\}$ for any $\ap\in\Pi$. Consequently, either $\Phi(\Phi^{-1}(\ap))=\{\ap\}$ 
or $\Phi(\Phi^{-1}(\ap))=\{\ap,\ap'\}$ for some other $\ap'\in\Pi$. The second possibility occurs only for
$\GR{A}{n}, \GR{D}{2n+1},\GR{E}{6}$. Note that if $\Phi(\beta)=\{\ap,\ap'\}$, then
$\widetilde\nap=\widetilde{\n_{\{\ap'\}}}$.

\begin{lm}              \label{lm:Phi=2}
If\/ $\Phi(\beta_j)=\{\ap,\ap'\}$ for some $\beta_j\in\eus K$, then $[\theta:\ap]=[\theta:\ap']$. In particular,
if $\ap$ is commutative, then so is $\ap'$.
\end{lm}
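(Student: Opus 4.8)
The plan is to exploit the structural description of $\Phi(\beta_j)$ from Section~\ref{subs:MCP}: if $\#\Phi(\beta_j)=2$, then the irreducible root system $\Delta\lg j\rg$ is of type $\GR{A}{r}$ with $r\ge 2$, the subset $\Pi\lg j\rg = \supp(\beta_j)$ is a chain in the Dynkin diagram with all roots of equal length, and $\beta_j$ is the highest root of this $\GR{A}{r}$-subsystem. In the $\GR{A}{r}$ numbering of $\Pi\lg j\rg$, the highest root $\beta_j$ is $\ap^{(1)} + \ap^{(2)} + \dots + \ap^{(r)}$, so by Lemma~\ref{lm:K-svojstva}(5) the two simple roots with $(\ap,\beta_j)>0$ are precisely the two endpoints of the chain $\Pi\lg j\rg$; call them $\ap$ and $\ap'$. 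Thus $\Phi(\beta_j)=\{\ap,\ap'\}$ consists of the two ends of the subdiagram $\supp(\beta_j)$.

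Next I would relate $[\theta:\ap]$ to a coefficient inside $\Delta\lg j\rg$. The key point is that $\beta_j$ arises in the cascade construction as the highest root of an irreducible component of $\Delta_{\beta_{j'}}$ for its parent $\beta_{j'}$, and iterating back up the (unique, by Lemma~\ref{lm:K-svojstva}(3,4)) chain from $\beta_j$ to $\theta=\beta_1$. At each step from a node to its parent, one passes from a root system to the orthogonal-complement subsystem of its highest root; the decomposition \eqref{eq:decomp-Delta} gives $\Delta^+=\bigsqcup_i\eus H_{\beta_i}$, and for $\ap\in\Phi(\beta_j)$ the coefficient $[\theta:\ap]$ can be computed by expressing $\theta$ through the Heisenberg pieces. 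Concretely, I would argue that for $\ap\in\Phi(\beta_j)$ one has $[\theta:\ap]=\sum_{\beta_i\curge\beta_j}[\beta_i:\ap]$, and since $\supp(\beta_i)\supsetneq\supp(\beta_j)$ for each proper ancestor $\beta_i$ and $\ap$ lies at an \emph{endpoint} of the connected subdiagram $\supp(\beta_j)$, the local geometry of the Dynkin diagram forces $[\beta_i:\ap]=[\beta_i:\ap']$ for every such ancestor: both endpoints of $\supp(\beta_j)$ sit symmetrically relative to how $\supp(\beta_j)$ attaches inside $\supp(\beta_i)$. Summing over ancestors yields $[\theta:\ap]=[\theta:\ap']$.

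The cleanest route, which I would actually write up, avoids a case analysis by invoking the symmetry of the $\GR{A}{r}$ component directly together with induction on $\rk\g$ along the cascade. At the top level, if $\Delta\lg j\rg$ already equals a component of $\Delta_\theta$ that is of type $\GR{A}{r}$, one checks on the classical and exceptional diagrams (using the tables of Section~\ref{sect:tables}, or just that $\theta$ restricted to such a component is symmetric under the diagram automorphism of that $\GR{A}{r}$ component induced by the way it hangs off $\supp(\theta)$) that $[\theta:\ap]=[\theta:\ap']$. The inductive step is: $\Pi\lg j\rg\subset\Pi\lg i\rg$ for the parent $\beta_i$, and by induction applied to the simple algebra $\g\lg i\rg$ (with its own cascade, via Remark~\ref{rem:reduct-g}) one has $[\beta_i:\ap]=[\beta_i:\ap']$; then one more application of the same top-level observation inside $\g\lg i\rg$, combined with $\eus H_{\beta_i}$ contributing equally to $\ap$ and $\ap'$, closes the induction.

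The main obstacle is the top-level claim that the highest root $\theta$ (resp.\ $\beta_i$) has equal coefficients on the two endpoints of an $\GR{A}{r}$-subdiagram that occurs as an irreducible factor of $\Delta_\theta$. This is essentially a finite check over the Dynkin diagrams, but it should be packaged conceptually: such an $\GR{A}{r}$-factor $\Pi\lg j\rg$ sits inside $\Pi$ as a chain whose two ends are either both attached to the rest of $\supp(\theta)$ in mirror-image fashion, or else $\Pi\lg j\rg=\supp(\theta)$ is the whole diagram of an $\GR{A}{n}$ — and in every one of these configurations the relevant portion of $\Delta^+$ admits an involution fixing the data and swapping $\ap\leftrightarrow\ap'$. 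Once that symmetry is identified the conclusion $[\theta:\ap]=[\theta:\ap']$ is immediate, and the final sentence — that $\ap$ commutative ($[\theta:\ap]=1$) forces $\ap'$ commutative — follows at once from the characterisation $\Pi_{\sf com}=\{\ap\in\Pi\mid[\theta:\ap]=1\}$ recalled just before Proposition~\ref{prop:1}. \qu
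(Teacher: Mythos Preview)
Your fallback to a case-by-case verification is legitimate, and the paper itself concedes that route. However, the conceptual arguments you sketch both have genuine gaps, and the paper supplies a different, much cleaner uniform proof that you missed.

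First, the identity $[\theta:\ap]=\sum_{\beta_i\curge\beta_j}[\beta_i:\ap]$ in your second paragraph is simply false: $\theta=\beta_1$ is one of the summands, so the right-hand side is at least $[\theta:\ap]+[\beta_j:\ap]>[\theta:\ap]$. (For $\GR{E}{6}$ and $\ap=\ap_1\in\Phi(\beta_2)$ one gets $1\ne 1+1$.) The cascade does \emph{not} decompose $\theta$.

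Second, your induction in the third paragraph does not close. Applying the lemma inductively to $\g\lg i\rg$ (with $\beta_i$ the parent of $\beta_j$) yields $[\beta_i:\ap]=[\beta_i:\ap']$, but you need $[\theta:\ap]=[\theta:\ap']$, and $\theta\ne\beta_i$. No mechanism is given to transport the equality from $\beta_i$ up to $\theta$; the phrase ``$\eus H_{\beta_i}$ contributing equally'' has no precise content here.

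The paper's a priori argument is this. Write $w_0=\prod_{\beta\in\eus K}s_\beta$; by \cite[Prop.\,1.10]{ko12} this is the longest element of $W$. Every $\beta\in\eus K\setminus\eus K(j)$ is orthogonal to all of $\Delta\lg j\rg$ (ancestors of $\beta_j$ because $\Delta\lg j\rg\subset\Delta_{\beta}$; incomparable elements because the supports are disjoint). Hence $w_0$ restricted to $\Delta\lg j\rg$ equals $\prod_{\beta\in\eus K(j)}s_\beta$, which is the longest element of $W(\Delta\lg j\rg)$. Since $\Delta\lg j\rg$ is of type $\GR{A}{p}$ with extreme simple roots $\ap,\ap'$, this longest element sends $\ap\mapsto-\ap'$. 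Therefore the permutation $-w_0$ of $\Pi$ swaps $\ap$ and $\ap'$, and since $-w_0(\theta)=\theta$ one gets $[\theta:\ap]=[\theta:\ap']$ immediately. This is exactly the ``involution swapping $\ap\leftrightarrow\ap'$'' you were groping for in your last paragraph, but made explicit and uniform.
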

\begin{proof}
This is easily verified case-by-case, and an {\sl a priori\/} explanation is available, too. \\
Let $s_\gamma\in W$ be the reflection for $\gamma\in\Delta^+$. Then 
$w_0=\prod_{\beta\in\eus K}s_\beta\in W$ is the longest element and $w_0(\beta)=-\beta$ for all $\beta\in\eus K$~\cite[Prop.\,1.10]{ko12}. Recall that $\beta_j$ is the highest root in the irreducible root system
$\Delta\lg j\rg\subset\Delta$. If $\#\Phi^{-1}(\beta_j)=2$, then $\Delta\lg j\rg$ is of type $\GR{A}{p}$, $p\ge 2$, and
$\ap,\ap'$ are the extreme roots in $\Pi\lg j\rg=\Pi\cap\Delta\lg j\rg$. Set $\eus K(j)=\eus K\cap\Delta\lg j\rg$, which is
the cascade in $\Delta\lg j\rg$. It is easily seen that $w_0(\Delta\lg j\rg)=\Delta\lg j\rg$ and $w_0$ acts on
$\Delta\lg j\rg$ as the longest element in the Weyl group $W(j)$. Hence $w_0(\ap)=-\ap'$, and we are done.
\end{proof}

\begin{prop}      
\label{prop:2}
Suppose that the optimisation of\/ $\n$ is $\widetilde\nap$ for some  $\ap\in\Pi_{\sf com}$. 
Then $\n$ has a \CP \ and $\n\cap\n_{\{\check\ap\}}$ is a \CP-ideal for each 
$\check\ap\in \Phi(\Phi^{-1}(\ap))\cap \Delta(\n)$. More precisely,
\begin{itemize}
\item \ if\/ $\Phi(\Phi^{-1}(\ap))=\{\ap\}$, then $\ap\in\Delta(\n)$ and\/ $\n_{\{\ap\}}$ is a \CP-ideal in $\n$;
\item \ if\/ $\Phi(\Phi^{-1}(\ap))=\{\ap,\ap'\}$, then at least one of\/
$\nap$ and\/ $\n_{\{\ap'\}}$ is a \CP-ideal in $\n$.
\end{itemize}
\end{prop}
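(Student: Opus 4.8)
The plan is to reduce Proposition~\ref{prop:2} to the already-established optimal case, Proposition~\ref{prop:1}(2), by exploiting the fact that optimisation preserves $\bb$ (Corollary~\ref{cor:jos}) and that a $\CP$-ideal of a smaller nilradical can be intersected down from a $\CP$-ideal of its optimisation. First I would record the basic numerology: by hypothesis $\tilde\n=\widetilde\nap$, so $\eus K(\n)=\eus K(\tilde\n)=\eus K(\widetilde\nap)$, which by Proposition~\ref{prop:1}(1) is a chain, say $\eus K(\n)=[\beta_k,\beta_1]_{\eus K}=\{\beta_1,\dots,\beta_k\}$ after renumbering, and $\ind\n=\#\eus K(\n)=k$ by Corollary~\ref{cor:jos}. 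Hence $\bb(\n)=(\dim\n+k)/2$. Now set $\beta:=\Phi^{-1}(\ap)$; since $\widetilde\nap=\widetilde{\n_{\{\ap\}}}$ has $\eus K=[\,\beta,\beta_1]_{\eus K}$ while $\eus K(\n)=[\beta_k,\beta_1]$, we get $\beta=\beta_k$ (the two chains share the same bottom). So $\ap\in\Phi(\beta_k)$, i.e.\ $\#\Phi(\beta_k)\in\{1,2\}$.

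For the first bullet, $\Phi(\Phi^{-1}(\ap))=\{\ap\}$, I claim $\ap\in\Delta(\n)$. This should follow because $\Delta(\n)$ is an upper ideal of $(\Delta^+,\curle)$ containing $\beta_k=\beta$ (as $\beta_k\in\eus K(\n)\subset\Delta(\n)$); one checks that any upper ideal whose intersection with $\eus K$ has $\beta_k$ as its minimum and which is contained in $\Delta(\widetilde\nap)$ must contain every simple root in $\Phi(\beta_j)$ for $j\le k$ — here the only relevant new simple root below $\beta_k$ is $\ap$ itself, which sits in $\eus H_{\beta_k}$ as a minimal element, hence lies in $\Delta(\n)$ whenever $\beta_k$ does (using that $\h_k$ is Heisenberg with $\beta_k$ on top and $\ap$ at the bottom of a pairing $\{\beta_k-\ap,\ap\}$). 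Granting $\ap\in\Delta(\n)$, the candidate $\CP$-ideal is $\ce:=\n\cap\n_{\{\ap\}}=\n\cap\nap$. It is an abelian $\be$-ideal (intersection of a $\be$-ideal with the abelian $\be$-ideal $\nap$), so it suffices to compute $\dim\ce=\bb(\n)$. Writing $\n=\bigoplus_i(\n\cap\h_i)$ and noting $\n\cap\h_i=\emptyset$ unless $\beta_i\in\eus K(\n)$, the core estimate is $\dim(\n\cap\h_i)+\dim(\tilde\n\cap\h_i)-\dim(\n\cap\h_i\cap\nap)-\cdots$; more cleanly, since $[\beta_i:\ap]=1$ for $i\le k$ (each $\beta_i\succcurlyeq\beta_k=\beta$ lies in $\Delta(\nap)$), the Heisenberg structure of $\h_i$ gives $\dim(\n\cap\h_i\cap\nap)=\lceil \dim(\n\cap\h_i)/2\rceil$ exactly as in the proof of Proposition~\ref{prop:1}(2), and summing yields $\dim\ce=(\dim\n+k)/2=\bb(\n)$.

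For the second bullet, $\Phi(\beta_k)=\{\ap,\ap'\}$, by Lemma~\ref{lm:Phi=2} we have $[\theta:\ap']=[\theta:\ap]$, so $\ap'\in\Pi_{\sf com}$ as well and $\widetilde{\n_{\{\ap'\}}}=\widetilde\nap=\tilde\n$. Thus the hypothesis is symmetric in $\ap,\ap'$. Now $\Delta(\n)$ is an upper ideal containing $\beta_k$; I would argue that it must contain at least one of $\ap,\ap'$ (it contains the pairing edges $\{\beta_k-\ap,\ap\}$ and $\{\beta_k-\ap',\ap'\}$ inside $\eus H_{\beta_k}$, and if $\Delta(\n)$ contained neither $\ap$ nor $\ap'$ it would have to contain neither of $\beta_k-\ap$ nor $\beta_k-\ap'$, but $\beta_k\in\Delta(\n)$ forces one of the two "arms" of the $\GR{A}{p}$-Heisenberg $\h_k$ to be present — this is a small combinatorial check on the type-$\GR{A}{p}$ Heisenberg pattern). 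Say $\ap\in\Delta(\n)$ (the other case being symmetric). Then exactly the computation of the first bullet shows $\n\cap\nap$ is a $\CP$-ideal, completing the proof.

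The main obstacle I anticipate is the two combinatorial claims of the form ``an upper ideal of $(\Delta^+,\curle)$ sitting inside $\Delta(\widetilde\nap)$ and meeting $\eus K$ in the chain down to $\beta_k$ must contain $\ap$ (resp.\ $\ap$ or $\ap'$)''. These hinge on the precise internal structure of the Heisenberg subset $\eus H_{\beta_k}$: for a type-$\GR{A}{p}$ node one has $\eus H_{\beta_k}=\{\beta_k\}\cup\{\beta_k-\gamma,\gamma : \gamma\in(\text{one arm})\}$ and the simple roots $\ap,\ap'$ are precisely the two minimal elements of $\eus H_{\beta_k}$, so as soon as $\beta_k\in\Delta(\n)$ the upper-ideal property propagates down one of the two saturated chains $\beta_k\succ\cdots\succ\ap$ or $\beta_k\succ\cdots\succ\ap'$ (whichever is "used" by $\Delta(\n)$). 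I would phrase this via the known explicit description of $\h_k$ and of $\Phi(\beta_k)$ from Section~\ref{subs:MCP} rather than re-deriving it, keeping the argument short; everything else is the dimension count already rehearsed in Proposition~\ref{prop:1}.
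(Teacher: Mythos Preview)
Your argument has two genuine gaps, and both stem from treating $\n$ as if it were optimal.

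First, the numerology at the outset is wrong. Corollary~\ref{cor:jos} gives $\ind\tilde\n=\#\eus K(\n)=k$, \emph{not} $\ind\n=k$; for non-optimal $\n$ one has $\ind\n=k+(\dim\tilde\n-\dim\n)$ by Proposition~\ref{prop:jos}. Hence $\bb(\n)=(\dim\tilde\n+k)/2$, not $(\dim\n+k)/2$. Your slice-by-slice ceiling formula $\dim(\n\cap\h_i\cap\nap)=\lceil\dim(\n\cap\h_i)/2\rceil$ is likewise false when $\n\ne\tilde\n$: take $\g=\mathfrak{sl}_5$, $T=\{\ap_2\}$, $\ap=\ap_2$; then $\n=\n_{\{\ap_2\}}$, so $\n\cap\h_1\cap\nap=\n\cap\h_1$ has dimension $4$, while your formula gives $\lceil 4/2\rceil=2$.

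Second, your justification that $\ap$ (or one of $\ap,\ap'$) lies in $\Delta(\n)$ is phrased for arbitrary upper ideals of $(\Delta^+,\curle)$ and is false at that generality: the upper ideal $I\langle\beta_k\rangle$ contains $\beta_k$ but typically neither $\ap$ nor $\ap'$. The correct mechanism, and the one the paper uses, is specific to nilradicals: write $\n=\n_T$; then $T\subset\tilde T=\bigcup_{j\le k}\Phi(\beta_j)$, while $\supp(\beta_k)=\bigsqcup_{\beta_l\preccurlyeq\beta_k}\Phi(\beta_l)$, so $T\cap\supp(\beta_k)\subset\Phi(\beta_k)$. Since $\beta_k\in\Delta(\n_T)$ forces $T\cap\supp(\beta_k)\ne\varnothing$, one gets $T\cap\Phi(\beta_k)\ne\varnothing$.

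Once you know some $\check\ap\in\Phi(\beta_k)\cap T$, the rest is one line and requires no dimension count: $\check\ap\in T$ means $\n_{\{\check\ap\}}\subset\n_T=\n$, so $\n\cap\n_{\{\check\ap\}}=\n_{\{\check\ap\}}$, and then
\[
\dim\n_{\{\check\ap\}}=\bb(\widetilde{\n_{\{\check\ap\}}})=\bb(\tilde\n)=\bb(\n)
\]
by Proposition~\ref{prop:1}(2) and Corollary~\ref{cor:jos} (using $\widetilde{\n_{\{\check\ap\}}}=\widetilde\nap=\tilde\n$, with Lemma~\ref{lm:Phi=2} in the two-element case). This is exactly the paper's proof.
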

\begin{proof}
Set $\beta=\Phi^{-1}(\ap)$. Then $\beta$ is the unique minimal element of $\eus K(\nap)$. Because 
$\tilde\n=\widetilde\nap$, the root $\beta$ is also minimal in the chain
$ \eus K(\n)=  \eus K(\widetilde\nap)=\eus K(\nap)$.
Hence $\Delta(\n)\cap\Phi(\beta)\ne\varnothing$. 

\textbullet \ If $\ap\in \Delta(\n)$, then 
$\nap\subset\n \subset \widetilde\nap$. Hence $\dim\nap=\bb(\widetilde\nap)=\bb(\n)$, where the 
first  (resp. second) equality stems from Proposition~\ref{prop:1} (resp. Corollary~\ref{cor:jos}). 

\textbullet \ If $\ap\not\in \Delta(\n)$, then  $\Phi(\beta)=\{\ap,\ap'\}$ and $\ap'\in \Delta(\n)$ 
(otherwise, $\beta$ is not contained in $\Delta(\n)$). By Lemma~\ref{lm:Phi=2}, we have 
$\ap'\in\Pi_{\sf com}$. Then
the preceding argument works with $\ap'$ in place of  $\ap$. 
\end{proof}

\begin{prop}       \label{prop:chain-&-An}  
\leavevmode\par
\begin{enumerate}
\item  If\/ $(\eus K,\curle)$ is a chain, then, for any nilradical\/ $\n$, there is $\ap\in\Pi$ such that
$\tilde\n=\widetilde\nap$;
\item If\/ $\g=\slno$, then $\eus K$ is a chain and every nilradical\/ $\n$ has a \CP. 
\end{enumerate}
\end{prop}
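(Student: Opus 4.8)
The plan is to handle the two assertions in sequence, deriving (2) from (1) together with Proposition~\ref{prop:2}. For part (1), suppose $(\eus K,\curle)$ is a chain and write $\eus K=\{\beta_1\succ\beta_2\succ\dots\succ\beta_m\}$. For an arbitrary nilradical $\n$, Lemma~\ref{lm:optim}(1) says $\eus K(\n)=\eus K\cap\Delta(\n)$ is an upper ideal of the chain, hence of the form $\{\beta_1,\dots,\beta_k\}$ for some $k$; say the minimal element is $\beta_k$. I would then pick any simple root $\ap\in\Phi(\beta_k)$ (the set $\Phi(\beta_k)$ is nonempty since $\Pi=\bigsqcup_{\beta\in\eus K}\Phi(\beta)$ and each $\Phi(\beta)\ne\varnothing$), so that $\Phi^{-1}(\ap)=\beta_k$. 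By the computation in the proof of Proposition~\ref{prop:1}(1), $\eus K(\nap)=\{\beta_i\in\eus K\mid \beta_i\curge\beta_k\}=[\beta_k,\beta_1]_{\eus K}=\{\beta_1,\dots,\beta_k\}=\eus K(\n)$. Since the optimisation of a nilradical is the maximal nilradical $\n'$ with $\Delta(\n')\cap\eus K=\eus K(\n)$ (see Section~\ref{subs:optim}), and since $\eus K(\nap)=\eus K(\n)$, both $\tilde\n$ and $\widetilde{\nap}$ equal that maximal element; thus $\tilde\n=\widetilde{\nap}$.

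For part (2), the first claim is that $\eus K(\slno)$ is a chain: this is exactly Example~\ref{ex:sln}(1), where $\eus K=\{\beta_1,\dots,\beta_{[(n+1)/2]}\}$ with $\beta_i=\esi_i-\esi_{n+2-i}$ and $\beta_i\succ\beta_{i+1}$ for all $i$. Given any nilradical $\n\subset\slno$, part (1) produces $\ap\in\Pi$ with $\tilde\n=\widetilde{\nap}$; moreover, by the choice in the proof of (1), $\ap\in\Phi(\beta_k)$ where $\beta_k=\Phi^{-1}(\ap)$ is the minimal element of the chain $\eus K(\n)$. Now for $\g=\slno$ every simple root is commutative, $\Pi=\Pi_{\sf com}$, because $[\theta:\ap_i]=1$ for all $i$. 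Hence $\ap\in\Pi_{\sf com}$, and Proposition~\ref{prop:2} applies directly: $\n$ has a \CP, with $\n\cap\n_{\{\check\ap\}}$ a \CP-ideal for any $\check\ap\in\Phi(\Phi^{-1}(\ap))\cap\Delta(\n)$ (here $\Phi(\beta_k)=\{\ap_k,\ap_{n+1-k}\}$ when $k<(n+1)/2$, and one of the two lies in $\Delta(\n)$ since $\beta_k\in\Delta(\n)$).

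I do not anticipate a serious obstacle here: both parts are essentially bookkeeping on top of results already in hand (Lemma~\ref{lm:optim}, the chain computation inside the proof of Proposition~\ref{prop:1}, the characterisation of optimisation as a maximal element, Example~\ref{ex:sln}(1), and Proposition~\ref{prop:2}). The one point requiring a touch of care is the assertion that, in the chain case, choosing $\ap\in\Phi(\beta_k)$ really does force $\eus K(\nap)$ to coincide with $\eus K(\n)$ rather than merely contain it — this uses that $\eus K(\nap)$ is the full upper interval $[\beta_k,\beta_1]_{\eus K}$, which in a chain is determined by its minimal element $\beta_k$, and that $\beta_k$ is minimal in $\eus K(\n)$ as well. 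Once that identification is made, appealing to the definition of $\tilde\n$ as the maximal nilradical with prescribed cascade-intersection closes part (1), and part (2) is then immediate from Proposition~\ref{prop:2}.
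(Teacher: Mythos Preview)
Your argument is correct and follows essentially the same route as the paper: for (1) you take the minimal element $\beta_k$ of the chain $\eus K(\n)$, pick $\ap\in\Phi(\beta_k)$, and use that optimisation depends only on $\eus K(\n)$; for (2) you combine (1) with $\Pi=\Pi_{\sf com}$ for $\slno$ and invoke Proposition~\ref{prop:2}. The only cosmetic difference is that the paper selects $\ap\in\Delta(\n)\cap\Phi(\beta_k)$ rather than an arbitrary $\ap\in\Phi(\beta_k)$, but for the bare conclusion $\tilde\n=\widetilde{\nap}$ your weaker choice already suffices.
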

\begin{proof}
(1)  Let $\beta$ be the unique minimal element of $\eus K(\n)$. Then $\Delta(\n)\cap\Phi(\beta)\ne\varnothing$, and if $\ap\in \Delta(\n)\cap\Phi(\beta)$, then $\tilde\n=\widetilde\nap$.

(2) Here $\eus K$ is a chain (cf. Figure~\ref{fig:An}) and {\bf all} $\ap\in\Pi$ are commutative.
Therefore, the assertion follows from part~(1) and Proposition~\ref{prop:2}.
\end{proof}

\begin{ex}   \label{ex:sl-2} 
Let $\g=\slno$ and  $T=\{\ap_{i_1}, \dots, \ap_{i_k} \}$. In the notation of Example~\ref{ex:sln}(1),
the minimal element of $\eus K_T=\eus K\cap \Delta(\n_T)$ is $\beta_{\boldsymbol{i}}$, where 
$\bi\le [(n+1)/2]$ is defined by condition
\beq      \label{eq:nomer-i}
        \min_{j=1,\dots,k} \left\vert \frac{n+1}{2}-i_j\right\vert=\frac{n+1}{2}-\bi .
\eeq
Then at least one of $\ap_{\bi},\ap_{n+1-\bi}$ belongs to $T$ and $\ap_j\not\in T$ if
$\bi< j< n+1-\bi$. This procedure provides a simple root in $T$ that is closest to the middle of the matrix 
antidiagonal. Then $\widetilde{\n_T}=\widetilde{\n_{\{\ap_{\bi}\}}}=\widetilde{\n_{\{\ap_{n+1-\bi}\}}}$ 
and $\nap$ is a \CP-ideal in $\n_T$ for each $\ap\in \{ \ap_{\bi}, \ap_{n+1-{\bi}} \} \cap T$.
\end{ex}

Let us return to the general case. Given $\ap\in\Pi_{\sf com}$, consider  an arbitrary nilradical $\n$ 
inside $\widetilde{\nap}$. Then $\n\subset\tilde\n\subset \widetilde{\nap}$ and, by 
Proposition~\ref{prop:1}, $\tilde\n\cap\nap$ is a \CP-ideal in $\tilde\n$. We wish to prove that $\n$ also 
has a \CP. Since $\bb(\n)=\bb(\tilde\n)$, it would be sufficient to prove that 
$\n\cap\nap=\tilde\n\cap\nap$.  Such an equality may fail for an initial choice of $\ap\in\Pi_{\sf com}$, if 
$\g=\slno$ (cf. Example~\ref{ex:sln}(2), where $\n_T\subset\widetilde{\n_{\{\ap_j\}}}$ for $j=2,3,4,5$). 
However, Example~\ref{ex:sl-2} shows that, for any nilradical $\n$ in $\slno$, there is always at least one 
right choice of $\ap$ such that $\nap\subset\n\subset\widetilde{\nap}$ 
and thereby $\nap$ is a \CP\ in both $\n$ and $\tilde\n$. Indeed, if $\beta_i$ is the minimal element of 
$\eus K(\n)$, then any root in $\Phi(\beta_i)\cap \Delta(\n)$ will do.
Therefore, we exclude the $\mathfrak{sl}$-case from the following proposition.

\begin{prop}      
\label{prop:3}
Suppose that $\g\ne\slno$ and $\n\subset \widetilde{\nap}$ for some $\ap\in\Pi_{\sf com}$.  
Then there is $\check\ap\in \Phi(\Phi^{-1}(\ap))\cap \Delta(\n)$ such that
$\n\cap\n_{\{\check\ap\}}=\tilde\n\cap\n_{\{\check\ap\}}$ and the latter is a \CP-ideal in both $\n$ and $\tilde\n$.
\end{prop}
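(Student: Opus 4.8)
The plan is to deduce the statement from Proposition~\ref{prop:1} (and Proposition~\ref{prop:2}) applied to the optimal nilradical $\tilde\n$, supplemented by a connected-support argument inside the Heisenberg summands of $\tilde\n$. Set $\beta=\Phi^{-1}(\ap)$; by Proposition~\ref{prop:1}(1) the poset $\eus K(\n)$ is a chain, so let $\beta_k$ be its minimal element, renumbering $\eus K$ so that $\eus K(\n)=[\beta_k,\beta_1]_{\eus K}=\{\beta_1,\dots,\beta_k\}$ and $\tilde\n=\bigoplus_{i=1}^k\h_i$. From $\n\subseteq\widetilde{\nap}$ we get $\eus K(\n)\subseteq\eus K(\nap)=[\beta,\beta_1]_{\eus K}$, whence $\beta\curle\beta_k$ and $\tilde\n\subseteq\widetilde{\nap}$. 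Since $\tilde\n$ is optimal and $\ap\in\Pi_{\sf com}$, Proposition~\ref{prop:1}(2) shows $\tilde\n\cap\nap$ is a \CP-ideal of $\tilde\n$, while $\bb(\n)=\bb(\tilde\n)$ by Corollary~\ref{cor:jos}. Hence a \CP-ideal of $\n$ will be found as soon as we exhibit a commutative $\check\ap\in\Phi(\Phi^{-1}(\ap))$ with $\widetilde{\n_{\{\check\ap\}}}=\tilde\n$ and $\tilde\n\cap\n_{\{\check\ap\}}\subseteq\n$: then $\n\cap\n_{\{\check\ap\}}=\tilde\n\cap\n_{\{\check\ap\}}$ is an abelian $\be$-ideal of dimension $\bb(\n)$ lying in $\n$, hence a \CP-ideal of both $\n$ and $\tilde\n$.

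Call $\beta_k$ \emph{commutative} if $\Phi(\beta_k)\cap\Pi_{\sf com}\ne\varnothing$. In the commutative case I would first show $\beta=\beta_k$: checking case by case against the marked cascade posets of Section~\ref{sect:tables} one sees that, for $\g\ne\slno$, the $\beta_i\in\eus K$ whose $\Phi$-image meets $\Pi_{\sf com}$ form an antichain, so the relation $\beta\curle\beta_k$ between two such elements forces equality. (This is exactly where $\slno$ must be excluded: there all of $\eus K$ is commutative and is a chain, the source of the failure in Example~\ref{ex:sln}(2).) Next, $\beta_k\in\Delta(\n)$ gives $\supp(\beta_k)\cap T\ne\varnothing$ with $T=\Pi\cap\Delta(\n)$, and since $\supp(\beta_k)=\bigsqcup_{\beta_l\curle\beta_k}\Phi(\beta_l)$ while $\beta_l\notin\Delta(\n)$ for $\beta_l\prec\beta_k$ (minimality of $\beta_k$ in $\eus K(\n)$), any node of $\supp(\beta_k)\cap T$ must lie in $\Phi(\beta_k)$. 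Pick $\check\ap\in\Phi(\beta_k)\cap T$; it is commutative (commutativity propagates over $\Phi(\beta_k)$ by Lemma~\ref{lm:Phi=2} when $\#\Phi(\beta_k)=2$), it lies in $\Phi(\beta)=\Phi(\Phi^{-1}(\ap))$, and $\widetilde{\n_{\{\check\ap\}}}=\tilde\n$ because $\Phi^{-1}(\check\ap)=\beta_k$ is the minimal element of $\eus K(\n)$. Now one may simply invoke Proposition~\ref{prop:2} with $\check\ap$ in place of $\ap$; alternatively, $\check\ap\in T$ gives $\n_{\{\check\ap\}}\subseteq\n\subseteq\tilde\n$, so $\tilde\n\cap\n_{\{\check\ap\}}=\n_{\{\check\ap\}}\subseteq\n$, as wanted.

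In the remaining case $\Phi(\beta_k)$ has no commutative node, so $\beta\prec\beta_k$ (otherwise $\ap\in\Phi(\beta_k)$ would be commutative); here I would take $\check\ap=\ap$ (trivially in $\Phi(\Phi^{-1}(\ap))$) and prove $\tilde\n\cap\nap\subseteq\n$ directly. A root $\gamma$ of $\tilde\n\cap\nap$ lies in some $\eus H_{\beta_i}$ with $1\le i\le k$ and, $\ap$ being commutative, has $\ap\in\supp(\gamma)$; inside the irreducible system $\Delta\lg i\rg$ the set $\eus H_{\beta_i}$ is the root set of the Heisenberg parabolic, so $\supp(\gamma)$ meets $\Phi(\beta_i)$, is connected, and therefore contains the entire Dynkin-diagram path joining that node of $\Phi(\beta_i)$ to $\ap$; this path runs through $\Pi\lg k\rg=\supp(\beta_k)$, hence through $\Phi(\beta_k)$, hence through a node of $T$ (since $\varnothing\ne\supp(\beta_k)\cap T\subseteq\Phi(\beta_k)$), so $\gamma\in\Delta(\n)$. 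The step I expect to be the main obstacle is precisely this path-in-support verification: it combines the connected-support property of Heisenberg roots with the fine structure of the marked cascade poset, and controlling the two members of $\Phi(\beta_i)$ when $\Delta\lg i\rg$ is of type $\GR{A}{}$ is the delicate point — once more the place where the hypothesis $\g\ne\slno$ is indispensable. I would therefore run this case as a short explicit inspection of the Hasse diagrams and $\Phi$-data of Section~\ref{sect:tables}.
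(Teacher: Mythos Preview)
Your proof is correct and follows essentially the same approach as the paper's: split into the case $\tilde\n=\widetilde\nap$ (dispatched via Proposition~\ref{prop:2}) versus $\tilde\n\varsubsetneq\widetilde\nap$, and in the latter use a connected-support argument in the Dynkin diagram---ultimately verified by inspection of the tables in Section~\ref{sect:tables}---to show $\tilde\n\cap\nap\subseteq\n$. The only cosmetic differences are that you phrase the case split via the antichain property of $\{\beta_i:\Phi(\beta_i)\cap\Pi_{\sf com}\ne\varnothing\}$ for $\g\ne\slno$ (a neat structural observation the paper does not make explicit) and work directly with general $\n$, whereas the paper first reduces to the minimal nilradical $\n_{\{\check\ap\}}$ with $\Phi(\beta_k)=\{\check\ap\}$ before running the same Dynkin-path check.
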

\begin{proof} 
For $\g\ne\slno$, we can also use the property that any $\ap\in\Pi_{\sf com}$ is 
an endpoint in the Dynkin diagram.

As above, $\beta=\Phi^{-1}(\ap)$ and $\eus K(\nap)=[\beta,\beta_1]_{\eus K}$ is a chain in $\eus K$.
In view of Proposition~\ref{prop:2}, we may assume that $\tilde\n \varsubsetneq \widetilde\nap$. Then 
$\eus K(\tilde\n)=[\beta_k,\beta_1]_{\eus K}$ is a shorter chain in $\eus K$ (i.e., $\beta\prec\beta_k$) 
and $\tilde\n=\bigoplus_{j=1}^k \h_j$. If $k=1$, then $\n=\tilde\n=\h_1$ (the equality 
$\n=\tilde\n$ holds, since $\g\ne\slno$ and hence $\theta$ is a multiple of a fundamental weight), i.e.,
$\n$ has a \CP\ by Prop.~\ref{prop:heis}. Hence, we may assume that $k\ge 2$.

Since $\eus K(\n)=\eus K(\tilde\n)=[\beta_k,\beta_1]_{\eus K}$, we have $\Phi(\beta_k)\cap
\Delta(\n)\ne\varnothing$ and $\n\cap\h_i\ne\{0\}$ for $1\le i\le k$.
Because $\ap\in\Pi_{\sf com}$, $\beta=\Phi^{-1}(\ap)$, $\beta\prec\beta_k$, and $\g\ne\slno$,
one can infer from Figures~\ref{fig:An}--\ref{fig:En} in Section~\ref{sect:tables} that $\Phi(\beta_k)$ is always a sole simple 
root, say $\check\ap$. This implies that 

\textbullet \quad $\n_{\{\check\ap\}}$ is the unique minimal nilradical whose optimisation is $\tilde\n$,  

\textbullet \quad the whole of $\h_k$ belongs to $\n_{\{\check\ap\}}$. 

\noindent
Without loss of generality, we may assume that $\n=\n_{\{\check\ap\}}$. Then
\[
   \n_{\{\check\ap\}}=\bigoplus_{i=1}^k (\n_{\{\check\ap\}}\cap\h_i) \subset \tilde\n=\bigoplus_{i=1}^k \h_i ,
\]
and our goal is to prove that $\n_{\{\check\ap\}}\cap\h_i\cap \nap=\h_i\cap\nap$ for all $i\le k$.
Since $\h_k\subset \n_{\{\check\ap\}}$, this holds for $i=k$. If $i<k$, then $\beta_k\prec\beta_i$ and 
hence $\check\ap\in \supp(\beta_k)\subset\supp(\beta_i)$.  

If $\gamma\in \Delta(\h_i\cap\nap)$, then $\ap\in\supp(\gamma)$ and 
$\Phi(\beta_i)\cap \supp(\gamma)\ne\varnothing$. Here $\Phi(\beta_i)$ is also a sole root, say $\ap'$;
hence $\ap'\in \supp(\gamma)$. The key observation is that, for such a string 
$\beta\prec \beta_k\prec \beta_i$ in $\eus K$, the simple root $\check\ap$ lies between 
$\ap$ and $\ap'$ in the Dynkin diagram. Hence $\check\ap\in \supp(\gamma)$ and 
$\gamma\in \Delta(\n_{\{\check\ap\}}\cap\h_i\cap \nap)$. Thus, $\n\cap\nap=\tilde\n\cap\nap$, and it is 
an abelian $\be$-ideal in both $\n$ and $\tilde\n$. Since $\dim(\tilde\n\cap\nap)=\bb(\tilde\n)$
(Proposition~\ref{prop:1}) and $\bb(\n)=\bb(\tilde\n)$ (Corollary~\ref{cor:jos}), it is a \CP-ideal in $\n$.
\end{proof}

Combining Propositions~\ref{prop:heis}, \ref{prop:1}, \ref{prop:2}, and \ref{prop:3} yields the first part our 
classification.

\begin{thm}         \label{thm:CP-da}
Let $\n$ be the nilradical of a standard parabolic subalgebra $\p\supset\be$. Then $\n$ has a \CP\ 
whenever at least one of the following two conditions holds:
\begin{itemize}
\item $\n=\n_\theta$ is the Heisenberg nilradical. In this case, $\n_\theta\cap\ah$ is a \CP-ideal  
for any maximal abelian ideal\/ $\ah\lhd\be$, and vice versa;.
\item There is an $\ap\in\Pi_{\sf com}$ such that $\n\subset\widetilde\nap$. If $\g\ne\slno$,
then $\n\cap\n_{\{\check\ap\}}$ is a \CP-ideal for $\check\ap\in \Phi(\Phi^{-1}(\ap))\cap\Delta(\n)$. For 
$\g=\slno$, one has to pick a right $\ap\in\Pi_{\sf com}$, cf. Example~\ref{ex:sl-2}.
\end{itemize} 
\end{thm}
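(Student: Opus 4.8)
The plan is to simply assemble Theorem~\ref{thm:CP-da} from the four propositions already proved, since each of its two bullet points is a direct restatement of one or more of them. First I would handle the Heisenberg case: Proposition~\ref{prop:heis} gives that $\n_\theta$ has a \CP\ and, together with Remark~\ref{rem:napomina-abelian}(2), that the \CP-ideals of $\n_\theta$ are exactly the intersections $\n_\theta\cap\ah_{\ap,\sf max}$ where $\ah_{\ap,\sf max}$ ranges over the maximal abelian ideals of $\be$ (parametrised by long simple roots). I would observe that the count matches — the number of \CP-ideals in $\n_\theta$ equals the number of long simple roots, which equals the number of maximal abelian ideals of $\be$ — so the correspondence $\ah\mapsto\n_\theta\cap\ah$ is a bijection between maximal abelian ideals of $\be$ and \CP-ideals of $\n_\theta$; this gives the ``and vice versa'' clause.

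Next I would dispose of the second bullet. Suppose $\ap\in\Pi_{\sf com}$ and $\n\subset\widetilde{\nap}$. If $\g\ne\slno$, Proposition~\ref{prop:3} provides $\check\ap\in\Phi(\Phi^{-1}(\ap))\cap\Delta(\n)$ with $\n\cap\n_{\{\check\ap\}}=\tilde\n\cap\n_{\{\check\ap\}}$ a \CP-ideal in $\n$, which is precisely the claimed statement. For $\g=\slno$, Proposition~\ref{prop:chain-&-An}(2) already asserts that every nilradical has a \CP, and the refinement about choosing the ``right'' $\ap$ is exactly Example~\ref{ex:sl-2}: for $\n=\n_T$ one takes $\bi$ as in~\eqref{eq:nomer-i}, and then $\nap$ is a \CP-ideal for any $\ap\in\{\ap_{\bi},\ap_{n+1-\bi}\}\cap T$. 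So there is nothing new to verify here beyond citing these results and noting that Proposition~\ref{prop:2} covers the optimal subcase (it is subsumed in Proposition~\ref{prop:3} except for the parenthetical ``in particular'' about $\nap\subset\widetilde{\nap}$, which follows from Proposition~\ref{prop:1}(2)).

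Since the theorem as stated is purely an ``if'' statement (the converse is deferred to Section~\ref{sect:ohne-CP}), there is genuinely no obstacle: the proof is one or two sentences invoking Propositions~\ref{prop:heis}, \ref{prop:1}, \ref{prop:2}, and~\ref{prop:3}. The only point requiring a word of care is the ``vice versa'' in the Heisenberg bullet, i.e.\ that \emph{every} \CP-ideal of $\n_\theta$ arises as $\n_\theta\cap\ah$ for a maximal abelian ideal $\ah$ of $\be$; this is where I would lean on the classification of abelian ideals from~\cite{imrn} as already recalled in Remark~\ref{rem:napomina-abelian}, matching the cardinality $\#\Pi_l$ on both sides. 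If I wanted to be thorough I might also remark that the two bullets are not mutually exclusive — e.g.\ in type $\GR{A}{n}$ the Heisenberg nilradical is also of the form in the second bullet — but this overlap causes no logical problem for an ``if'' statement and need not be belaboured.

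Thus the write-up would read, in essence: \emph{By Proposition~\ref{prop:heis} and Remark~\ref{rem:napomina-abelian}(2), the first condition implies $\n$ has a \CP\ with the stated description of \CP-ideals; by Propositions~\ref{prop:1}--\ref{prop:3} (and Proposition~\ref{prop:chain-&-An} together with Example~\ref{ex:sl-2} in the case $\g=\slno$), the second condition implies $\n$ has a \CP\ with the stated \CP-ideal.} I expect the main ``obstacle'' — really just a bookkeeping point — to be stating the bijection in the Heisenberg case cleanly, since that is the only assertion in the theorem not already written verbatim in an earlier proposition.
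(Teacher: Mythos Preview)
Your proposal is correct and matches the paper's approach exactly: the paper's entire proof is the single sentence ``Combining Propositions~\ref{prop:heis}, \ref{prop:1}, \ref{prop:2}, and~\ref{prop:3} yields the first part of our classification.'' Your additional care with the ``vice versa'' clause (invoking Remark~\ref{rem:napomina-abelian}(2) and the cardinality match with $\#\Pi_l$) is appropriate, since that detail is not literally contained in Proposition~\ref{prop:heis} alone and the paper's one-line proof tacitly relies on it.
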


\begin{rmk}    \label{rem:recover}
If the poset $(\eus K,\curle)$ is a chain and the minimal element $\beta_m\in\eus K$ has the property that 
$\Phi(\beta_m)\subset\Pi_{\sf com}$, then $\widetilde\nap=\ut$ for any $\ap\in \Phi(\beta_m)$. Therefore, in such a case any nilradical has a \CP. The Hasse diagrams in 
Section~\ref{sect:tables} show that these conditions are satisfied only for $\GR{A}{n}$ and
$\GR{C}{n}$. In this way, we recover a result of Elashvili--Ooms, see~\cite[Theorem\,6.2]{ag03}.
\end{rmk}

\section{On nilradicals without commutative polarisation} 
\label{sect:ohne-CP}

\noindent
In this section, we prove the converse to Theorem~\ref{thm:CP-da}.

\begin{thm}     \label{thm:CP-net}
If a nilradical $\n$ has a \CP, then $\n=\n_\theta$ or
there is an $\ap\in\Pi_{\sf com}$ such that $\n\subset\widetilde\nap$.
\end{thm}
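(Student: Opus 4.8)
The plan is to argue by contraposition and case analysis, using the optimisation device together with the structure of the marked cascade poset. Suppose $\n$ is a nilradical that is \emph{not} the Heisenberg nilradical $\n_\theta$ and that is \emph{not} contained in $\widetilde{\nap}$ for any $\ap\in\Pi_{\sf com}$; we must show $\bb(\n)$ cannot be realised by an abelian $\be$-ideal inside $\n$. Since $\bb(\n)=\bb(\tilde\n)$ (Corollary~\ref{cor:jos}) and $\eus K(\n)=\eus K(\tilde\n)$, and since an abelian ideal of $\n$ embeds in one of $\tilde\n$ of the same dimension, it suffices to treat the \emph{optimal} case $\n=\tilde\n=\bigoplus_{\beta_i\in\eus K(\n)}\h_i$. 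So assume $\n$ is optimal, $\eus K(\n)$ is the upper ideal $\{\beta_{i_1},\dots,\beta_{i_r}\}$ of $(\eus K,\curle)$, and $r=\ind\n$ by Corollary~\ref{cor:jos}, whence $\bb(\n)=(\dim\n+r)/2$.

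The first step is to handle the case where $\eus K(\n)$ is \emph{not a chain}. By Proposition~\ref{prop:1}(1), the hypothesis $\n\subset\widetilde{\nap}$ for some $\ap\in\Pi_{\sf com}$ would force $\eus K(\n)$ to be a chain; so if $\eus K(\n)$ is not a chain, the second alternative automatically fails, and we must show $\n$ has no \CP. Here $\eus K(\n)$ has at least two minimal elements $\beta_p,\beta_q$, and I would exploit the fact that in any abelian $\be$-ideal $\ca\subset\n$ the ``local'' contributions $\ca\cap\h_i$ are constrained: for a Heisenberg summand $\h_i$, an abelian subspace that is also a $\be$-submodule of $\h_i$ has dimension at most $(\dim\h_i+1)/2$, with equality giving $\bb(\h_i)$. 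The point is that if $\eus K(\n)$ branches, abelianness forces an interaction between the summands sitting above the branch point that kills the extra $+1$'s one would need: summing the local bounds gives only $(\dim\n + c)/2$ where $c$ is the number of \emph{chains} covering $\eus K(\n)$ from the top, and one shows $c<r$ when $\eus K(\n)$ is not a chain — or, more carefully, that abelianness across two incomparable maximal-below elements costs at least one dimension, so $\dim\ca<\bb(\n)$. This is where the combinatorics of which root spaces bracket nontrivially (controlled by Lemma~\ref{lm:K-svojstva}(2), comparability $\iff$ overlapping supports) does the work.

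The second step is the case where $\eus K(\n)$ \emph{is} a chain, say $\eus K(\n)=[\beta_k,\beta_1]_{\eus K}$, but its minimal element $\beta_k$ has $\Phi(\beta_k)\cap\Pi_{\sf com}=\varnothing$ — equivalently (since $\ap\in\Pi_{\sf com}\iff[\theta:\ap]=1$) every $\ap\in\Phi(\beta_k)$ has $[\theta:\ap]\ge 2$. If instead some $\ap\in\Phi(\beta_k)$ were commutative then, by Proposition~\ref{prop:chain-\&-An}(1)-style reasoning, $\tilde\n=\widetilde{\nap}$ and the second alternative would hold, contrary to assumption; so we may assume no such $\ap$ exists. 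Now I would show directly that no abelian $\be$-ideal in $\n=\bigoplus_{i=1}^k\h_i$ attains dimension $\bb(\n)=(\dim\n+k)/2$. The mechanism: because $[\beta_k:\ap]\ge 2$ for the relevant simple root(s), the Heisenberg summand $\h_k$ together with its position relative to $\h_1,\dots,\h_{k-1}$ forces that any $\be$-ideal $\ca\subset\n$ which is ``full'' in $\h_k$ (size $(\dim\h_k+1)/2$) must contain non-commuting pairs $e_\gamma,e_{\gamma'}$ with $\gamma\in\eus H_{\beta_i}$, $\gamma'\in\eus H_{\beta_j}$, $i\ne j$ — i.e. abelianness and fullness are incompatible, so $\dim\ca\le(\dim\n+k)/2-1<\bb(\n)$. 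Concretely, I expect to invoke~\eqref{eq:non-com}: a root $\gamma$ with $\gamma\curle[\theta/2]$ is non-commutative, $\ce\lg\gamma\rg$ is not abelian, and when $\beta_k$ has a coefficient $\ge 2$ one can locate such a $\gamma$ inside $\Delta(\n)$ forcing the obstruction. The remaining uniform inputs are the classification of $\Pi_{\sf com}$ and the Hasse diagrams of Section~\ref{sect:tables}, used to confirm that ``$\eus K(\n)$ a chain and $\Phi(\beta_k)\not\subset\Pi_{\sf com}$'' is exactly the complement (within the chain case) of ``$\n\subset\widetilde{\nap}$, $\ap\in\Pi_{\sf com}$''.

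The main obstacle, I expect, is the second step — proving the \emph{non-}existence of a \CP\ when $\eus K(\n)$ is a chain ending at a non-commutative node. For the chain case one has a clean formula $\bb(\n)=(\dim\n+k)/2$ and an obvious candidate ideal built summand-by-summand, so one must genuinely show that candidate (and every competitor) falls short of the bound; the Heisenberg bound $(\dim\h_i+1)/2$ is sharp per summand, so the deficit has to come from a \emph{cross}-summand non-commutation that is forced precisely by the coefficient-$\ge 2$ condition. Making that forcing precise and uniform across $\GR{B}{},\GR{D}{},\GR{E}{},\GR{F}{},\GR{G}{}$ — most cleanly via the non-commutative-root criterion~\eqref{eq:non-com} and Lemma~2.2 of~\cite{p20b} rather than by case-checking — is the technical heart. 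The branching case and the bookkeeping that the two stated alternatives exhaust the complement are comparatively routine given the tables.
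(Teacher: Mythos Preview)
Your reduction to the optimal case and the dichotomy ``$\eus K(\n)$ not a chain'' versus ``$\eus K(\n)$ a chain whose minimal node is not good'' is the right organising principle, but both steps contain a genuine gap, and the missing idea is the same in each.

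In Step~1 your proposed mechanism is wrong. If $\beta_p,\beta_q$ are two incomparable minimal elements of $\eus K(\n)$, then by the very Lemma~\ref{lm:K-svojstva}(2) you cite their supports are \emph{disjoint}; hence every root in $\eus H_{\beta_p}$ has support disjoint from every root in $\eus H_{\beta_q}$, so these root spaces \emph{commute}. There is no ``cross-branch non-commutation'' between incomparable summands, and your inequality $\dim\ca\le(\dim\n+c)/2$ with $c<r$ is not what happens: the per-summand Heisenberg bound $(\dim\h_i+1)/2$ really does sum to $\bb(\n)$ regardless of branching. The obstruction in the branched case is more delicate and global; in type $\GR{D}{}$ the paper has to invoke that $\ce\lg\ap_1\rg$ is a \emph{maximal} abelian ideal (so nothing from $\eus H_{\beta_3}$ can be adjoined), and separately the sharp bound $\binom{2n}{2}$ on all abelian ideals of $\be$ for $\GR{D}{2n}$.

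What you are missing is the paper's key local obstruction: since an abelian $\be$-ideal $\ah$ satisfies $\Delta(\ah)\subset\Delta^+_{\sf com}$ (every $\gamma\in\Delta(\ah)$ has $\ce\lg\gamma\rg\subset\ah$ abelian), the correct per-summand bound is not merely $(\dim\h_i+1)/2$ but
\[
\dim(\ah\cap\h_i)\ \le\ \#\bigl(\eus H_{\beta_i}\cap\Delta^+_{\sf com}\bigr),
\]
and a \CP\ forces equality in the Heisenberg bound for \emph{every} $i$, so one needs $\#(\eus H_{\beta_i}\cap\Delta^+_{\sf com})\ge(\dim\h_i+1)/2$ for each $\beta_i\in\eus K(\n)$. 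Call such $\beta_i$ \emph{admissible}. This is the precise ``counting'' version of your Step~2 intuition, and it does most of the work: a direct case-by-case computation using~\eqref{eq:non-com} shows that for every type except $\GR{D}{}$ the maximal admissible upper ideal of $\eus K$ is already a single good chain, which finishes both your Step~1 and Step~2 at once. Only for $\GR{D}{n}$ does the admissible locus branch (into two or three good chains), and there the extra arguments mentioned above are needed. So rather than two heuristic mechanisms, the proof runs on one sharp local criterion plus a short type-$\GR{D}{}$ addendum.
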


Recall that $\eus K(\n)=\eus K\cap\Delta(\n)$ is an upper ideal of $\eus K$ and $\eus K(\widetilde\nap)$
is a chain with minimal element $\beta=\Phi^{-1}(\ap)$ for any $\ap\in\Pi$. Given $\beta\in\eus K$, we 
say that the chain $[\beta,\beta_1]_{\eus K}$ is {\it good}, if $\Phi(\beta)\subset\Pi_{\sf com}$. Therefore, 
Theorem~\ref{thm:CP-net} is equivalent to the following statement.

\begin{thm}   \label{thm:CP-shtrikh}
Suppose that a nilradical $\n$ has a \CP.  Then $\eus K(\n)=\{\beta_1\}$ or $\eus K(\n)$ is contained 
in a good chain.
\end{thm}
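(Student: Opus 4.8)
The plan is to prove the contrapositive: if $\eus K(\n)\ne\{\beta_1\}$ and $\eus K(\n)$ is not contained in any good chain, then $\n$ has no \CP. Since $\bb(\n)=\bb(\tilde\n)$ and an abelian $\be$-ideal of $\n$ is also one of $\tilde\n$, it suffices to work with the optimal nilradical $\tilde\n=\bigoplus_{\beta_i\in\eus K(\n)}\h_i$; thus I may assume $\n$ is optimal. Recall from Corollary~\ref{cor:jos} that then $\ind\n=\#\eus K(\n)$, so $\bb(\n)=\tfrac12(\dim\n+\#\eus K(\n))$. The goal becomes: show that every abelian $\be$-ideal $\ah\lhd\n$ has $\dim\ah<\bb(\n)$.

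First I would reduce to the case where $\eus K(\n)$ is a single chain that is not good. Indeed, if $\eus K(\n)$ has at least two minimal elements $\beta,\beta'$, then (since $\eus H_\beta$ and $\eus H_{\beta'}$ sit in root subsystems with disjoint supports and $\ind\n$ is the number of cascade elements) I expect the "generic isotropic dimension" to split as a sum over the maximal chains, and it should be enough to bound $\dim\ah$ on each Heisenberg block $\h_i$ separately and observe that at least one block is "deficient." So the heart of the matter is a single non-good chain $\eus K(\n)=[\beta_k,\beta_1]_{\eus K}=\{\beta_1,\dots,\beta_k\}$ with $\Phi(\beta_k)\not\subset\Pi_{\sf com}$, i.e., some simple root in $\Phi(\beta_k)$ has coefficient $\ge 2$ in $\theta$; by~\eqref{eq:non-com} this means $\beta_k\curle[\theta/2]$ or, equivalently, $\beta_k$ is a non-commutative root, so the $\be$-ideal $\ce\lg\beta_k\rg$ is \emph{not} abelian.

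The key step is then to show: for such a chain, the maximal dimension of an abelian $\be$-ideal contained in $\n=\bigoplus_{i=1}^k\h_i$ is strictly less than $\tfrac12(\dim\n+k)$. I would do this by analysing, block by block, how large $\ah\cap\h_i$ can be while keeping $\ah$ abelian. On each Heisenberg block $\h_i$ (a Heisenberg Lie algebra of dimension $\dim\h_i$), a single-block abelian ideal has dimension at most $(\dim\h_i+1)/2$, and $\nap\cap\h_i$ realizes this only when $[\beta_i:\ap]=1$, i.e., only along a good chain. For the bottom block $\h_k$, because $\beta_k$ is non-commutative, I expect that \emph{no} choice of $\be$-ideal can achieve $(\dim\h_k+1)/2$ inside $\h_k$ while the ideal remains abelian and compatible with the higher blocks — this is where the commutative-roots description~\eqref{eq:non-com} and the results of \cite{p06,p20b} on abelian ideals and $\mathsf h^*$ enter. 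Concretely, I would invoke \cite[Lemma\,2.2]{p20b}-type bounds (abelian $\be$-ideals in $\n_\theta$ have dimension $\le\mathsf h^*-1$, with equality forcing a long-simple-root structure) together with the fact that $\eus H_{\beta_k}$, viewed inside $\Delta\lg k\rg$, is the Heisenberg subset of a subsystem whose cascade poset is a chain — and track how the abelianity constraint between $\h_k$ and $\h_{k-1},\dots,\h_1$ costs at least one dimension. Summing, $\dim\ah\le\sum_{i<k}\tfrac{\dim\h_i+1}{2}+\bigl(\tfrac{\dim\h_k+1}{2}-1\bigr)<\bb(\n)$.

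The main obstacle I anticipate is the block-interaction analysis in the last paragraph: it is easy to bound $\ah$ block-by-block, but a \CP-ideal of $\n$ need not respect the decomposition $\bigoplus\h_i$, so I must argue that the abelianity relations \emph{across} blocks together with the non-commutativity of $\beta_k$ genuinely force a dimension loss, rather than merely making the naive upper bound non-sharp on one block. I would handle this either by a uniform root-combinatorial argument using Lemma~\ref{lm:K-svojstva} (supports, the chain structure of $[\beta_k,\beta_1]_{\eus K}$, and part~(5): $\ap\in\Phi(\beta_i)\iff(\ap,\beta_i)>0$) to show that any root $\gamma$ in $\ah\cap\h_i$ for $i<k$ with $[\gamma:\check\ap]=0$ (where $\check\ap\in\Phi(\beta_k)\setminus\Pi_{\sf com}$) pairs nontrivially with some root in $\ah\cap\h_k$ — contradicting abelianity — or, failing a clean uniform argument, by falling back on the case-by-case inspection of the Hasse diagrams in Section~\ref{sect:tables}, checking the finitely many "minimal bad chains" (those $[\beta_k,\beta_1]_{\eus K}$ with $\beta_k$ a maximal non-commutative cascade element) in types $\GR{B}{},\GR{D}{},\GR{E}{6},\GR{E}{7}$ and the three exceptional types $\GR{G}{2},\GR{F}{4},\GR{E}{8}$ where $\Pi_{\sf com}=\varnothing$ and only $\eus K(\n)=\{\beta_1\}$ survives. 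I expect the final write-up to combine a general reduction with a short table-driven verification of the base cases.
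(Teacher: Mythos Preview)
Your proposal contains a genuine error in the ``key step.'' You claim that if $\Phi(\beta_k)\not\subset\Pi_{\sf com}$ then $\beta_k\curle[\theta/2]$, i.e., $\beta_k$ is non-commutative. This implication is false. Take $\g$ of type $\GR{B}{n}$ with $n\ge 4$: the chain $[\beta_3,\beta_1]_{\eus K}$ is not contained in any good chain (the only good chain is $[\beta_2,\beta_1]_{\eus K}$, and $\beta_2,\beta_3$ are incomparable), yet $\beta_3=\esi_3+\esi_4$ \emph{is} commutative, since $[\beta_3:\ap_4]=2>1=[[\theta/2]:\ap_4]$ so $\beta_3\not\curle[\theta/2]$. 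Thus $\ce\lg\beta_3\rg$ \emph{is} abelian, and your argument that ``the bottom block can't achieve $(\dim\h_k+1)/2$ because $\ce\lg\beta_k\rg$ is not abelian'' collapses. Similarly, your attempted reduction ``chain not good $\Rightarrow$ bottom cascade element non-commutative'' confuses ``$[\beta_k,\beta_1]_{\eus K}$ is itself not good'' with ``$[\beta_k,\beta_1]_{\eus K}$ is not contained in any good chain''; these differ (e.g.\ $[\beta_2,\beta_1]_{\eus K}$ in $\GR{E}{7}$ is not good but sits inside the good chain $[\beta_3,\beta_1]_{\eus K}$).

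The paper's argument avoids this pitfall by a sharper blockwise count. The point is not whether $\beta_j$ itself is commutative, but how many commutative roots lie in the whole Heisenberg block $\eus H_{\beta_j}$: since any abelian $\be$-ideal $\ah$ has $\Delta(\ah)\subset\Delta^+_{\sf com}$, one gets $\dim(\ah\cap\h_j)\le\#(\eus H_{\beta_j}\cap\Delta^+_{\sf com})$. Call $\beta_j$ \emph{admissible} if $\#(\eus H_{\beta_j}\cap\Delta^+_{\sf com})\ge(\dim\h_j+1)/2$; then a \CP\ forces every $\beta_j\in\eus K(\n)$ to be admissible. In type $\GR{B}{n}$ one computes $\#\eus H_{\beta_3}=4n-13$ but $\#(\eus H_{\beta_3}\cap\Delta^+_{\sf com})=2n-7<2n-6$, so $\beta_3$ is inadmissible even though it is a commutative root. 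A short case-by-case check then shows that the maximal admissible upper ideal $\eus J_{\sf adm}$ is a good chain for all types except $\GR{D}{n}$, where $\eus J_{\sf adm}$ is a union of good chains and one further ad~hoc argument (comparing $\bb(\n)$ with the known maximum dimension of abelian $\be$-ideals, and using maximality of $\ce\lg\ap_1\rg$) rules out $\eus K(\n)$ straddling two of them. Your proposed ``block-interaction'' analysis is aiming at the right phenomenon but with the wrong invariant; replacing ``$\beta_k$ non-commutative'' by ``$\beta_k$ inadmissible'' is what makes the argument go through.
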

 
\begin{cl}            
\label{cor:only-heis}   
If\/ $\Pi_{\sf com}=\varnothing$, then $\n_\theta=\h_1$ is the only nilradical with \CP. This happens 
exactly if\/ $\g$ is of type $\GR{G}{2}$, $\GR{F}{4}$, $\GR{E}{8}$.
\end{cl}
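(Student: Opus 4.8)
The plan is to deduce Corollary~\ref{cor:only-heis} directly from Theorem~\ref{thm:CP-shtrikh} (equivalently Theorem~\ref{thm:CP-net}) together with the description of $\Pi_{\sf com}$ already recorded in the text. Recall that $\ap\in\Pi_{\sf com}$ if and only if $[\theta:\ap]=1$, and that $\Pi_{\sf com}=\varnothing$ precisely for $\GR{G}{2}$, $\GR{F}{4}$, $\GR{E}{8}$ (this is stated right before Proposition~\ref{prop:1}). So the first step is: if $\Pi_{\sf com}=\varnothing$, then there is no $\ap\in\Pi_{\sf com}$ and hence no good chain at all, so Theorem~\ref{thm:CP-shtrikh} forces $\eus K(\n)=\{\beta_1\}$ for any nilradical $\n$ with a \CP. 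Since $\eus K(\n)$ is an upper ideal of $(\eus K,\curle)$ whose unique maximal element is $\beta_1=\theta$, having $\eus K(\n)=\{\beta_1\}$ means $\Delta(\n)$ meets $\eus K$ only in $\theta$.

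The second step is to upgrade ``$\eus K(\n)=\{\beta_1\}$'' to ``$\n=\n_\theta$''. For this I would argue that $\eus K(\n)=\{\beta_1\}=\eus K(\n_\theta)$, and invoke the optimisation machinery of Section~\ref{subs:optim}: $\n_\theta=\h_1$ is optimal and is the maximal element of $\{\n'\mid \Delta(\n')\cap\eus K=\{\theta\}\}$, so $\n\subset\widetilde{\n}=\n_\theta$. It remains to see the reverse inclusion, i.e.\ that no \emph{proper} sub-nilradical of $\n_\theta$ has a \CP. Here one uses $\bb(\n)=\bb(\widetilde{\n})=\bb(\n_\theta)=\mathsf{h}^*-1$ (Corollary~\ref{cor:jos} and the computation $\dim\n_\theta=2\mathsf{h}^*-3$, $\ind\n_\theta=1$ recalled before Proposition~\ref{prop:heis}). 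If $\ah\lhd\be$ is a \CP-ideal of $\n$, then $\ah$ is an abelian $\be$-ideal of dimension $\mathsf{h}^*-1$; but by \cite[Lemma\,2.2]{p20b} (cited in the proof of Proposition~\ref{prop:heis}), every $\be$-ideal contained in $\n_\theta$ of dimension $\le\mathsf{h}^*-1$ is automatically abelian, and conversely such an $\ah$ must live inside the largest $\be$-ideal of $\ut$ on which all brackets land in $\g_\theta$, which is exactly $\n_\theta$; so $\ah\subset\n_\theta$ and $\dim\ah=\mathsf{h}^*-1=\bb(\n_\theta)$. Since $\ah\subset\n\subset\n_\theta$ and $\dim\ah=\dim\n_\theta$ would already force $\n=\n_\theta$ (as $\ah\subset\n$), the only way $\n$ can carry a \CP\ is $\n=\n_\theta$.

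The third step is the arithmetic identification of the three types: I would simply point to the earlier sentence stating $\Pi_{\sf com}=\varnothing$ iff $\g$ is of type $\GR{G}{2}$, $\GR{F}{4}$, $\GR{E}{8}$ — this is because $\Pi_{\sf com}=\{\ap\in\Pi\mid[\theta:\ap]=1\}$ is empty exactly when every simple root occurs in $\theta$ with coefficient $\ge 2$, which one reads off the highest-root expansions, and these are precisely the three Dynkin types with no minuscule/cominuscule node. Combined with the converse implication (each of these three $\g$ does have the Heisenberg nilradical, which has a \CP\ by Proposition~\ref{prop:heis}), we get the ``exactly if'' assertion.

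The only genuinely delicate point is making the passage ``$\eus K(\n)=\{\theta\}\Rightarrow\n=\n_\theta$'' airtight, i.e.\ ruling out proper sub-nilradicals of $\n_\theta$ with a \CP; everything else is bookkeeping. Concretely one must check that a \CP-ideal of such an $\n$ has dimension $\mathsf{h}^*-1$ and is forced into $\n_\theta$, which follows from the dimension count plus the fact (used already in Proposition~\ref{prop:heis}) that abelian $\be$-ideals of that dimension are classified via \cite{imrn} and all lie in $\n_\theta$. So the corollary is a short formal consequence of Theorems~\ref{thm:CP-da} and \ref{thm:CP-net}; I would keep the proof to a couple of lines: ``If $\Pi_{\sf com}=\varnothing$, Theorem~\ref{thm:CP-net} leaves only $\n=\n_\theta$; conversely $\n_\theta$ always has a \CP\ by Proposition~\ref{prop:heis}. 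The equality $\Pi_{\sf com}=\varnothing$ holds precisely for $\GR{G}{2},\GR{F}{4},\GR{E}{8}$.''
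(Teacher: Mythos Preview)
Your final two-line summary is exactly right and is how the paper intends the corollary: Theorem~\ref{thm:CP-net} says that a nilradical with \CP\ is either $\n_\theta$ or sits inside $\widetilde{\nap}$ for some $\ap\in\Pi_{\sf com}$; if $\Pi_{\sf com}=\varnothing$ the second alternative is vacuous, so $\n=\n_\theta$. Together with Proposition~\ref{prop:heis} and the already-recorded fact that $\Pi_{\sf com}=\varnothing$ precisely for $\GR{G}{2},\GR{F}{4},\GR{E}{8}$, this gives the corollary.

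However, your longer ``second step'' is both an unnecessary detour and contains a genuine error. It is a detour because Theorem~\ref{thm:CP-net} already yields $\n=\n_\theta$ directly; there is no need to pass through Theorem~\ref{thm:CP-shtrikh}, obtain only $\eus K(\n)=\{\beta_1\}$, and then separately upgrade this to $\n=\n_\theta$. The error is in your dimension argument: a \CP-ideal $\ah$ of $\n$ has $\dim\ah=\bb(\n_\theta)=\mathsf h^*-1$, whereas $\dim\n_\theta=2\mathsf h^*-3$; these are not equal (for $\mathsf h^*>2$), so the clause ``$\dim\ah=\dim\n_\theta$ would already force $\n=\n_\theta$'' never fires and proves nothing. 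If you really wanted to argue via $\eus K(\n)=\{\beta_1\}$, the correct and much shorter reason is structural, not numerical: since $\Pi_{\sf com}=\varnothing$ excludes type $\GR{A}{n}$, the highest root $\theta$ is a multiple of a single fundamental weight, so $\Phi(\theta)$ is a single simple root and $\n_\theta$ is the nilradical of a \emph{maximal} parabolic. Hence the only nilradicals contained in $\n_\theta$ are $0$ and $\n_\theta$ itself, and $\n\subset\tilde\n=\n_\theta$ forces $\n=\n_\theta$.
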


The following simple but important observation follows from the equality $\bb(\n)=\bb(\tilde\n)$.
 
\begin{lm}        \label{lm:optim2}
If\/ a nilradical $\n$ has a \CP, then so does the optimisation $\tilde\n$. If 
$\ah$ is a \CP-ideal in $\n$, then it is also a \CP-ideal in $\tilde\n$.
\end{lm}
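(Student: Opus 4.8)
The plan is to exploit the single identity $\bb(\n)=\bb(\tilde\n)$ (Corollary~\ref{cor:jos}) together with the fact that $\n\subset\tilde\n$, and to compare the relevant isotropic subspaces directly. First I would recall that if $\ah$ is a \CP\ in $\n$, then $\ah$ is an abelian $\be$-ideal contained in $\n$ with $\dim\ah=\bb(\n)$; in particular $\ah$ is an abelian $\be$-ideal contained in $\tilde\n$, since $\n\subset\tilde\n$. So $\ah$ is automatically a candidate \CP-ideal for $\tilde\n$: it is abelian, it is a $\be$-ideal, and it sits inside $\tilde\n$. The only thing to check is the dimension: we need $\dim\ah=\bb(\tilde\n)$. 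But $\dim\ah=\bb(\n)=\bb(\tilde\n)$ by Corollary~\ref{cor:jos}. Hence $\ah$ is a maximal isotropic subspace of $\tilde\n$ for the Kirillov form of a regular functional on $\tilde\n$, i.e.\ a \CP\ in $\tilde\n$, and being a $\be$-ideal it is a \CP-ideal in $\tilde\n$. This already proves both assertions simultaneously, since the existence of the \CP-ideal $\ah$ in $\tilde\n$ shows in particular that $\tilde\n$ has a \CP.

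Let me spell out the one subtlety: a \CP\ in a Lie algebra $\q$ is by definition an abelian subalgebra of dimension exactly $\bb(\q)$, which is the maximum possible dimension of an abelian subalgebra (by the general bound $\dim\ah\le\bb(\q)$ recalled in the Introduction). So to say ``$\ah$ is a \CP\ in $\tilde\n$'' I need (i) $\ah$ abelian, (ii) $\ah\subseteq\tilde\n$ a subalgebra, (iii) $\dim\ah=\bb(\tilde\n)$. Conditions (i) and (ii) are inherited verbatim from $\n$ once we note $\n\subseteq\tilde\n$ (as subalgebras of $\ut$, with $\tilde\n$ a $\be$-ideal containing the $\be$-ideal $\n$); condition (iii) is the content of $\bb(\n)=\bb(\tilde\n)$. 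Finally, because $\ah$ was an abelian $\be$-ideal inside $\n$, it is in particular an abelian $\be$-ideal inside $\tilde\n$, so it qualifies as a \CP-\emph{ideal} of $\tilde\n$ in the sense fixed in the Introduction.

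There is really no main obstacle here; the lemma is a formal consequence of the invariance of $\bb$ under optimisation. The only point one must be slightly careful about is not to confuse the abstract notion of \CP\ (abelian subalgebra of maximal dimension $\bb(\q)$) with the ambient-dependent notion of \CP-ideal (an abelian $\be$-ideal of that dimension); both transfer from $\n$ to $\tilde\n$ because $\n$ and $\tilde\n$ share the same Borel $\be$ and $\n\lhd\tilde\n\lhd\be$. I would therefore write the proof in three short lines: $\ah$ abelian $\be$-ideal, $\ah\subset\n\subset\tilde\n$, $\dim\ah=\bb(\n)=\bb(\tilde\n)$, hence $\ah$ is a \CP-ideal of $\tilde\n$ and a fortiori $\tilde\n$ has a \CP.
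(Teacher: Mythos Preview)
Your proposal is correct and matches the paper's approach exactly: the paper simply states that the lemma ``follows from the equality $\bb(\n)=\bb(\tilde\n)$'' (Corollary~\ref{cor:jos}), which is precisely the argument you spell out. One small wording slip: in your opening line you say that a \CP\ in $\n$ is automatically an abelian $\be$-ideal, but by definition a \CP\ is only an abelian \emph{subalgebra}; this does not affect your argument, since for the first assertion no $\be$-ideal structure is needed (an abelian subalgebra of $\n$ of dimension $\bb(\n)=\bb(\tilde\n)$ is already a \CP\ in $\tilde\n$), and for the second assertion the hypothesis is that $\ah$ is a \CP-ideal.
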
   

Consequently, it suffices to prove that the {\bf optimal} nilradicals that are not covered by 
Theorem~\ref{thm:CP-da} do not have a \CP. For this reason, we stick below to optimal nilradicals.
In the rest of this section, we assume that $\n=\tilde\n$ and $\n\ne\n_\theta$. The latter is equivalent to 
that $\#\eus K(\n)\ge 2$. 

Our first goal is to realise what constraints on $\n$ imposes the presence of a \CP. Let 
$\eus K(\n)=\{\beta_1,\dots,\beta_k\}$ for $k\ge 2$.  Then $\n=\tilde\n=\bigoplus_{i=1}^k\h_i$, where 
$\h_1=\n_\theta$. If $\ah\lhd\n$ is an abelian ideal of $\be$, then $\ah_i=\h_i\cap\ah$ is an abelian Lie
algebra for each $i$ and hence $\dim\ah_i\le (\dim\h_i +1)/2=\bb(\h_i)$. Then 
\[
   \dim\ah\le \sum_{i=1}^k \frac{\dim\h_i +1}{2}=\frac{\dim\n+k}{2}=\bb(\n) .
\]
Therefore, if $\ah$ is a \CP-ideal, then $\dim\ah_i= (\dim\h_i +1)/2$ for all $i$. Of course, one can always
pick an abelian ideal $\ah_i\lhd \h_i$ of required dimension for each $i$.  The question is, whether it is 
possible to choose all $\ah_i$'s in a compatible way (so that the whole of $\ah$ be abelian).

If $\n\subset \widetilde\nap$ and $\ap\in\Pi_{\sf com}$, then a compatible choice is achieved by letting
$\ah_i=\h_i\cap\nap$, cf{.} the proof of Proposition~\ref{prop:1}. Hence our task is to prove that if
$k=\# \eus K(\n)\ge 2$ and $\eus K(\n)$ is not contained in an interval $[\beta,\beta_1]_{\eus K}$ 
for some $\beta\in\Phi^{-1}(\Pi_{\sf com})$, then different abelian pieces $\{\ah_i\}_{i=1}^k$ cannot 
compatibly be matched.

To this end, we use a simple counting argument. Clearly, if $\ah\lhd\n$ is an abelian $\be$-ideal, then 
$\Delta(\ah)\subset \Delta^+_{\sf com}$. Moreover, for each $i$, the roots in 
$\eus H_{\beta_i}\cap \Delta(\ah)=\Delta(\ah_i)$ are commutative in $\Delta^+$. Therefore, if $\ah$ is 
a \CP-ideal in an optimal nilradical $\n$, then 
\beq        \label{eq:admissible}
    \# (\eus H_{\beta_i}\cap \Delta^+_{\sf com})\ge (\dim\h_i +1)/2=(\#\eus H_{\beta_i}+1)/2 
\eeq
for $i=1,\dots,k$. Therefore, if $\n$ has a \CP\ and $\beta_j\in\eus K$ does not 
satisfy~\eqref{eq:admissible}, then $\beta_j$ cannot occur in $\eus K(\n)$.
Let us say that $\beta_j\in\eus K$ is {\it admissible}, if~\eqref{eq:admissible}
holds for $\beta_j$. Note that if $\beta_j$ is admissible, then $\beta_j\in\Delta^+_{\sf com}$.
An upper ideal $\eus J\subset \eus K$ is said to be {\it admissible}, if {\bf all} elements of
$\eus J$ are admissible. If $\eus J_1$ and $\eus J_2$ are admissible, then so is 
$\eus J_1\cup\eus J_2$. Therefore, there is a unique maximal admissible upper ideal 
$\eus J_{\sf adm}\subset \eus K$. It also follows from the preceding argument that if $\n$ has a \CP, 
then $\eus K(\n)$ is admissible and thereby $\eus K(\n)\subset \eus J_{\sf adm}$. 

Since the elements of $\eus K$ and $\Delta^+_{\sf com}$ are known, it is not hard to determine 
$\eus J_{\sf adm}$ for all simple $\g$. If $\eus J_{\sf adm}$ appears to be a good chain, then this
immediately proves Theorem~\ref{thm:CP-shtrikh} for the corresponding $\g$. If $\eus J_{\sf adm}$ is 
not a sole chain in $\eus K$, then some extra argument is needed. In our case-by-case proof below, 
we use the numbering of $\{\beta_i\}$ given in Section~\ref{sect:tables}. 
The interested reader should always consult Figures~\ref{fig:An}--\ref{fig:En}, 
where the Hasse diagrams of the {\sf MCP} $(\eus K,\curle, \Phi)$ are depicted.

\begin{proof}[Proof of Theorem~\ref{thm:CP-shtrikh}]
\leavevmode\par
{\it\bfseries I.} \ For $\GR{A}{n}$ and $\GR{C}{n}$, all nilradicals have a \CP. 
Therefore, we omit these series.
Actually, it follows from Remark~\ref{rem:recover} that here $\eus J_{\sf adm}=\eus K$, which is a good chain in both cases.

{\it\bfseries II.}  \ Let $\g$ be exceptional. Then all commutative roots can be found via the use of 
\eqref{eq:non-com}, cf. also~\cite[Appendix\,A]{p06}.
\begin{itemize}
\item 
For $\GR{G}{2}$, $\GR{F}{4}$, and $\GR{E}{8}$, the highest root $\beta_1=\theta$ has the unique 
descendant $\beta_2$. For $\GR{F}{4}$ and $\GR{E}{8}$, one has $\beta_2\in\Delta^+_{\sf com}$.
But in all three cases
$(\#\eus H_{\beta_2}+1)/2 > \# (\eus H_{\beta_2}\cap \Delta^+_{\sf com})$. The relevant numbers
$\bigl(\#\eus H_{\beta_2}, \# (\eus H_{\beta_2}\cap \Delta^+_{\sf com})\bigr)$ are $(1,0),\,(5,1),\,(33,8)$,  
respectively.
Hence $\eus J_{\sf adm}=\{\beta_1\}$ and $\h_1=\n_\theta$ is the only nilradical having a \CP.

\item For $\GR{E}{7}$, one has $\beta_4\curle [\theta/2]$, hence $\beta_4\not\in\Delta^+_{\sf com}$,
cf.~\eqref{eq:non-com}. Then
$\eus J_{\sf adm}=[\beta_3,\beta_1]_{\eus K}$, which is a good chain, see Figure~\ref{fig:En}.

\item For $\GR{E}{6}$, one similarly has $\beta_3\not\in\Delta^+_{\sf com}$. Hence
$\eus J_{\sf adm}=[\beta_2,\beta_1]_{\eus K}$, which is a good chain.
\end{itemize}
This proves Theorem~\ref{thm:CP-shtrikh} for the exceptional Lie algebras.

{\it\bfseries III.}  \ Let $\g=\son$ be an orthogonal Lie algebra. We use below the explicit description of 
$\Delta^+_{\sf com}$ via $\{\esi_i\}$'s, which is given in the proof of Theorem\,4.4 in~\cite{p06}.
\begin{itemize}
\item For $\GR{B}{n}$ with $n\ge 4$, one has $\beta_3=\esi_3+\esi_4$ and 
\[
   \eus H_{\beta_3}=\{\esi_3\pm\esi_j \mid j=5,\dots,n\}\cup\{\esi_4\pm\esi_j \mid j=5,\dots,n\}
   \cup\{\esi_3,\esi_4, \beta_3\} .
\]
Hence $\# \eus H_{\beta_3}=4(n-4)+3=4n-13$. On the other hand,
\[
   \eus H_{\beta_3}\cap\Delta^+_{\sf com}=\{\esi_3+\esi_j \mid j=5,\dots,n\}\cup\{\esi_4+\esi_j \mid j=5,\dots,n\}\cup\{\beta_3\} ,
\]
and $\# (\eus H_{\beta_3}\cap \Delta^+_{\sf com})=2(n-4)+1=2n-7$. Therefore, \eqref{eq:admissible}
does not hold for $\beta_3$  
and  $\eus J_{\sf adm}=\{\beta_1,\beta_2\}=[\beta_2,\beta_1]_\eus K$, which 
is a good chain (see Fig.~\ref{fig:Bn}).

\item For $\GR{B}{3}$, one has $\eus H_{\beta_3}=\{\beta_3\}=\{\esi_3\}$ and $\esi_3\not\in
\Delta^+_{\sf com}$. Here again $\eus J_{\sf adm}=\{\beta_1,\beta_2\}$, see Figure~\ref{fig:D4,B3,G2}.
This proves Theorem~\ref{thm:CP-shtrikh} for series $\GR{B}{n}$.

\item For $\GR{D}{n}$ with $n\ge 4$, we have $\eus H_{\beta_{2k}}=\{\beta_{2k}\}$ and
$\beta_{2k}=\esi_{2k-1}-\esi_{2k}=\ap_{2k-1}\not\in \Delta^+_{\sf com}$ for $4\le 2k< n$. On the hand,
condition~\eqref{eq:admissible} holds for all other $\beta_i\in\eus K$.
Therefore, the minimal elements of $\eus J_{\sf adm}$ are 
$\left\{\begin{array}{rcl} \beta_{2}, \beta_{2n-1}, \beta_{2n} & , & \text{ for } \  \g=\GR{D}{2n} \\
\beta_{2}, \beta_{2n-1} & , & \text{ for } \  \g=\GR{D}{2n+1} \end{array}\right.$ and $\eus J_{\sf adm}$ is 
a union of several good chains (three for $\GR{D}{2n}$ and two for $\GR{D}{2n+1}$), see 
Figure~\ref{fig:Dn}. Hence an additional argument is needed, which is supplied below.
\end{itemize} 

\begin{prop}    
\label{prop:extra-arg}
Let $\g$ be of type $\GR{D}{n}$ ($n\ge 4$) and\/ $\n$ an optimal nilradical. 
\begin{enumerate}
\item If \ $\beta_2,\beta_3\in\eus K(\n)$, then $\n$ has no \CP;
\item if \ $\beta_{2n-1},\beta_{2n}\in\eus K(\n)$ for\/ $\GR{D}{2n}$, then $\n$ has no \CP.
\end{enumerate}
\end{prop}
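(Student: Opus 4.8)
The plan is to follow the same counting philosophy already used to define $\eus J_{\sf adm}$, but now applied to \emph{pairs} of incomparable cascade elements rather than to individual ones. The point is that even though each of $\beta_2,\beta_3$ (and similarly $\beta_{2n-1},\beta_{2n}$ in type $\GR{D}{2n}$) is admissible on its own, their \emph{simultaneous} presence in $\eus K(\n)$ forces constraints on the abelian pieces $\ah_i=\h_i\cap\ah$ that cannot be met together. More precisely, recall from the discussion preceding the proof that if $\ah$ is a \CP-ideal in the optimal nilradical $\n=\bigoplus_{i=1}^k\h_i$, then for \emph{every} $i$ with $\beta_i\in\eus K(\n)$ we need $\dim\ah_i=(\dim\h_i+1)/2$, and $\Delta(\ah_i)=\eus H_{\beta_i}\cap\Delta(\ah)$ must consist of commutative roots forming an abelian piece \emph{inside} the single abelian ideal $\ah$. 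So the obstruction I want is: there is no way to pick abelian half-dimensional ideals $\ah_2\lhd\h_2$ and $\ah_3\lhd\h_3$ (each made of commutative roots) such that $\ah_2\oplus\ah_3$ extends to an abelian $\be$-ideal; equivalently, any choice of such $\ah_2$ and $\ah_3$ contains roots $\gamma\in\Delta(\ah_2)$, $\delta\in\Delta(\ah_3)$ with $\gamma+\delta\in\Delta$, so $[e_\gamma,e_\delta]\ne 0$ and the sum is not abelian.

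Concretely, I would work with the $\esi_i$-realisation of $\GR{D}{n}$ used in Part III: $\Delta^+=\{\esi_i\pm\esi_j\mid i<j\}$, $\theta=\beta_1=\esi_1+\esi_2$, $\beta_2=\esi_1-\esi_2$ (no, rather) — here one must be careful and read off from Figure~\ref{fig:Dn} the correct $\beta_i$: $\beta_1=\esi_1+\esi_2$, $\beta_2=\esi_1-\esi_2$ is \emph{not} in $\eus K$; instead the descendants of $\theta$ give $\beta_2=\esi_3+\esi_4$ and $\beta_3=\esi_3-\esi_4$ after re-centring, or (depending on the numbering convention) $\beta_2,\beta_3$ are the two ``short-support'' roots $\esi_1-\esi_2$-type and $\esi_1+\esi_2$-type that sit at the top of the two-pronged tail; in any case the key structural fact is that $\supp(\beta_2)\cap\supp(\beta_3)$ is nonempty (they are comparable only if one support contains the other, so actually $\beta_2,\beta_3$ are \emph{incomparable} with disjoint supports and are both descendants of a common ancestor). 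For the first statement, I expect $\eus H_{\beta_2}$ and $\eus H_{\beta_3}$ to live in overlapping coordinate ranges so that the commutative roots available in $\eus H_{\beta_2}$ are of the form $\esi_a+\esi_c$ and those in $\eus H_{\beta_3}$ of the form $\esi_b+\esi_c$ (sharing a tail index), and then a root of the type $\esi_a-\esi_b$ lies in $\Delta$ and makes the pieces non-commuting; counting shows that in fact \emph{every} half-dimensional abelian choice in $\h_2$ necessarily contains such an $\esi_a+\esi_c$, and likewise in $\h_3$, so no compatible pair exists. For the second statement (type $\GR{D}{2n}$, the two ``fork'' roots $\beta_{2n-1}=\esi_{2n-1}+\esi_{2n}$ and $\beta_{2n}=\esi_{2n-1}-\esi_{2n}$ at the bottom), each $\h_i$ is itself a Heisenberg algebra on a small set of roots, and I would check directly that the only half-dimensional abelian ideal of $\h_{2n-1}$ uses $\esi_{2n-1}+\esi_{2n}$-side roots while the only one of $\h_{2n}$ uses $\esi_{2n-1}-\esi_{2n}$-side roots, and that these two small root sets fail strong orthogonality (their difference or sum lands in $\Delta$), hence cannot coexist in one abelian ideal.

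The cleanest way to package both cases uniformly is: suppose $\ah$ is a \CP-ideal in $\n$. Then $\Delta(\ah)\subset\Delta^+_{\sf com}$, and $\Delta(\ah)\cap\eus H_{\beta_i}=\Delta(\ah_i)$ with $\#\Delta(\ah_i)=(\#\eus H_{\beta_i}+1)/2$ for $i=2,3$ (resp.\ $i=2n-1,2n$). Now I pin down exactly which subsets of $\eus H_{\beta_i}\cap\Delta^+_{\sf com}$ of that cardinality are \emph{abelian} (i.e.\ closed under the condition that no two sum to a root) — in these small Heisenberg pieces there is essentially a unique such subset up to an obvious symmetry, determined by fixing the ``positive'' half of each complementary pair $\{\beta_i-\gamma,\gamma\}$ lying in $\Delta^+_{\sf com}$. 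Having identified $\Delta(\ah_2)$ and $\Delta(\ah_3)$ up to that ambiguity, I exhibit explicit $\gamma\in\Delta(\ah_2)$ and $\delta\in\Delta(\ah_3)$ with $\gamma+\delta\in\Delta^+$ for \emph{every} admissible choice, contradicting that $\ah_2\oplus\ah_3\subset\ah$ is abelian. This is exactly where the $\GR{D}{n}$-specific root combinatorics does the work: the two branches of the Dynkin fork create roots $\esi_a\pm\esi_b$ linking the two Heisenberg pieces. The main obstacle I anticipate is bookkeeping — getting the indexing of $\beta_2,\beta_3$ (and of the tail-pair in $\GR{D}{2n}$) to match Figure~\ref{fig:Dn}, and verifying that the ``unique abelian half'' claim really is forced rather than merely generic; once that is nailed down, the non-commutation is a one-line check. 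A possible shortcut, which I would try first, is to invoke the relation $\supp(\beta_2)=\supp(\beta_3)$ being impossible while $\supp(\beta_2)\cup\supp(\beta_3)$ still sitting inside a type-$\GR{D}{m}$ subdiagram ($m\le n$): then the pair $(\h_2,\h_3)$ is, up to the ambient rank, the \emph{same} configuration as the $\GR{D}{4}$ case $(\beta_2,\beta_3)$ with $\eus H_{\beta_2}=\{\esi_1-\esi_2\}$-line and $\eus H_{\beta_3}=\{\esi_1+\esi_2\}$-line, reducing (1) and (2) to a single finite check in $\GR{D}{4}$.
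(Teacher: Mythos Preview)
Your plan has a genuine gap: the direct ``pairwise non-commutation'' between $\ah_i$ and $\ah_j$ that you are looking for simply does not exist in either case.

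For part~(1), the paper's numbering gives $\beta_2=\esi_1-\esi_2=\ap_1$ and $\beta_3=\esi_3+\esi_4$ (these are the two descendants of $\theta$; you got tangled here). Thus $\eus H_{\beta_2}=\{\ap_1\}$ is a \emph{single} root, so necessarily $\Delta(\ah_2)=\{\esi_1-\esi_2\}$. Every $\delta\in\eus H_{\beta_3}$ is of the form $\esi_i\pm\esi_j$ with $i,j\ge 3$, so $(\esi_1-\esi_2)+\delta$ has four nonzero coordinates and is \emph{never} a root. Hence $[\ah_2,\ah_3]=0$ automatically, and your scheme of exhibiting $\gamma+\delta\in\Delta$ with $\gamma\in\ah_2$, $\delta\in\ah_3$ cannot succeed. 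The actual obstruction lives elsewhere: since $\ah$ is a $\be$-ideal and $\ap_1\in\Delta(\ah)$, the entire upper ideal $I\langle\ap_1\rangle=\{\esi_1\pm\esi_j\mid j\ge 2\}$ is forced into $\Delta(\ah)$. This is already a \emph{maximal} abelian ideal of $\be$ (it is $\n_{\{\ap_1\}}$ for the commutative simple root $\ap_1$), so no root of $\eus H_{\beta_3}$ can be adjoined without destroying abelianness. The clash is between $\ah_1\subset\h_1$ and $\ah_3$, mediated by the $\be$-ideal closure --- not between $\ah_2$ and $\ah_3$ as you propose.

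For part~(2) the same failure occurs: $\eus H_{\beta_{2n-1}}=\{\esi_{2n-1}+\esi_{2n}\}$ and $\eus H_{\beta_{2n}}=\{\esi_{2n-1}-\esi_{2n}\}$ are again singletons, and their sum $2\esi_{2n-1}$ is not a root of $\GR{D}{2n}$. So $[\ah_{2n-1},\ah_{2n}]=0$ and your local argument gives nothing. The paper instead reduces (using part~(1) and inadmissibility of $\beta_4,\dots,\beta_{2n-2}$) to the unique optimal $\n$ with $\eus K(\n)=\{\beta_1,\beta_3,\dots,\beta_{2n-3},\beta_{2n-1},\beta_{2n}\}$, computes $\bb(\n)=2n^2-n+1$, and observes that this exceeds $\binom{2n}{2}=2n^2-n$, the maximal dimension of \emph{any} abelian ideal of $\be$ in type $\GR{D}{2n}$. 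This is a global dimension count, not a local root-sum check.
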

\begin{proof}
(1) Assume that $\ah\lhd\n$ is a \CP-ideal. Since $\eus H_{\beta_2}=\{\beta_2\}=\{\ap_1\}$, $\Delta(\ah)$ 
must contain $\ap_1=\esi_1-\esi_2$. Hence $\ah$ contains the abelian ideal $\ce\lg\ap_1\rg$, i.e., 
$\Delta(\ah)$ contains $I\lg\ap_1\rg=\{ \esi_1\pm\esi_j\mid j=2,\dots, n\}$, which is an upper ideal of 
$\Delta^+$. Here $I\lg\ap_1\rg\setminus\{\ap_1\}\subset\eus H_{\beta_1}$. 
Since $\dim(\ah\cap\h_3)=(\dim\h_3+1)/2$, we must also add some roots from $\eus H_{\beta_3}$ to
$I\lg\ap_1\rg$. But it is easily seen that $\ce\lg\ap_1\rg$ is a maximal abelian ideal of $\be$, hence no 
roots from $\eus H_{\beta_3}$ can be added to $I\lg\ap_1\rg$. A contradiction!

(2) Here $\beta_{2n-1}=\ap_{2n-1}=\esi_{2n-1}+\esi_{2n}$ and 
$\beta_{2n}=\ap_{2n}=\esi_{2n-1}-\esi_{2n}$. If $\n$ is optimal and 
$\beta_{2n-1},\beta_{2n}\in\eus K(\n)$, then the Hasse diagram of $\eus K$ for type $\GR{D}{2n}$ (Fig.~\ref{fig:Dn}) shows that $\beta_3\in\eus K(\n)$ and hence $\ap_1=\beta_2\not\in \eus K(\n)$.
Then $\n=\n_T$, where 
$T=\Pi\setminus \{\ap_1,\ap_3,\dots,\ap_{2n-3}\}$. Therefore,
\[
   \dim\n=\dim\ut-(n-1)=4n^2-3n+1, \quad \ind\n=\#T=n+1  .
\] 
Hence $\bb(\n)=2n^2-n+1$. On the other hand, for $\g$ of type $\GR{D}{2n}$, the maximal dimension 
of an abelian ideal of $\be$ equals $\genfrac{(}{)}{0pt}{}{2n}{2}=2n^2-n$, see e.g.~\cite[Section\,9]{cp3}.
\end{proof}

It follows from Proposition~\ref{prop:extra-arg} that if $\n$ is optimal and has a \CP, then
$\eus K(\n)$ is contained in a good chain whose minimal element is:

{\bf --} \  $\beta_2$ or $\beta_{2n-1}$ or  $\beta_{2n}$ for $\GR{D}{2n}$; 

{\bf --} \  $\beta_2$ or $\beta_{2n-1}$ for $\GR{D}{2n+1}$.
\\
This completes our case-by-case verification if $\g$ is of type $\GR{D}{n}$. Thus,
Theorem~\ref{thm:CP-shtrikh} is proved.  
\end{proof}

Theorems~\ref{thm:CP-da} and \ref{thm:CP-shtrikh} provide a complete characterisation of the nilradicals
with \CP.
\begin{rmk}    \label{rem:by-prod}
It follows from our classification that, for a nilradical $\n$,  the property of having \CP\ depends only 
on $\eus K(\n)$. In particular, $\n$ has a \CP\ if and only if $\tilde\n$ does. Hence the 
converse of Lemma~\ref{lm:optim2} is also true, and it might be useful to find a direct proof for it.
However, this equivalence does not mean that every \CP-ideal for $\tilde\n$ is necessarily a \CP-ideal for $\n$, see Example~\ref{ex:sln}(2).
 
\end{rmk}

\begin{ex}        \label{ex:small-rk} 
Using two our theorems and Hasse diagrams from Section~\ref{sect:tables}, we give
the complete list of nilradicals with \CP\ for some small rank cases.
First, we assume that $\n=\n_T$ is optimal and point out the possible subsets $T\subset\Pi$.
\begin{description}
\item[$\GR{D}{4}$] \  $\{\ap_2\}$, $\{\ap_2,\ap_1\}$, $\{\ap_2,\ap_3\}$, $\{\ap_2,\ap_4\}$;
\item[$\GR{D}{5}$] \  $\{\ap_2\}$, $\{\ap_2,\ap_1\}$, $\{\ap_2,\ap_4,\ap_5\}$;
\item[$\GR{E}{6}$] \  $\{\ap_6\}$, $\{\ap_6,\ap_1, \ap_5\}$;
\item[$\GR{E}{7}$] \  $\{\ap_6\}$, $\{\ap_6,\ap_2\}$, $\{\ap_6,\ap_2, \ap_1\}$.
\end{description}
Then an arbitrary nilradical with \CP\ corresponds to a subset of any of the above sets.
\end{ex}

\begin{ex}[Continuation of \ref{ex:sl-2}]    \label{ex:sl-contin}
For $T=\{\ap_{i_1}, \dots, \ap_{i_k} \}$ and $\n_T\subset\slno$,  the index $\bi$ is defined by 
Eq.~\eqref{eq:nomer-i}. If both $\ap_\bi$ and $\ap_{n+1-\bi}$ belong to $T$, then $\n_T\cap\n_{\{\ap_j\}}$ 
is a \CP-ideal of $\n_T$ for each $j \in [\bi,\dots, n+1-\bi]$. But if only one of them, say $\ap_\bi$, belongs to $T$, then $\n_{\{\ap_\bi\}}$ is the only \CP-ideal in $\n_T$.
\\ \indent
For instance, if $T=\{\ap_1,\ap_n\}$, then $\n_T=\h_1$ is the Heisenberg nilradical and $\bi=1$. Here 
$\h_1\cap\nap$ is a \CP-ideal for {\bf any} $\ap\in\Pi$, which is a manifestation of 
Proposition~\ref{prop:heis} for $\slno$.
\end{ex}

\section{Some complements}
\label{sect:compl}

In this section, we obtain some extra results related to the optimisation and optimal nilradicals. We use 
capital Latin letters for connected subgroups of $G$ corresponding to algebraic Lie subalgebras of $\g$.
For instance, the chain of Lie algebras $\n\subset\be\subset\p$ gives rise to the chain of groups $N\subset B\subset P$.

Let $\n=\p^{\sf nil}$ be an arbitrary standard nilradical and $\n^-=(\p^-)^{\sf nil}$  the opposite nilradical, 
i.e., $\Delta(\n^-)=-\Delta(\n)$. Using the direct sum of vector spaces $\g=\p\oplus\n^-$ and the 
$P$-module isomorphism $\n^*\simeq \g/\p$, we identify $\n^*$ with $\n^-$ and thereby regard $\n^-$ as 
$P$-module. In terms of $\n^-$, the $\p$-action on $\n^*$ is given by the Lie bracket in $\g$, with the 
subsequent projection to $\n^-$. In particular, the coadjoint representation of $\n$ has the following 
interpretation. If $x\in\n$ and $\xi\in\n^-$, then $(\ads_\n x){\cdot}\xi=\mathsf{pr}_{\n^-} ([x,\xi])$, where 
$\mathsf{pr}_{\n^-}: \g\to\n^-$ is the projection with kernel $\p$. The same principle applies below to Lie 
algebras $\q$ such that $\n\subset\q\subset\p$.

Recall that the set of {\it regular\/} elements of $\n^*$ (w.r.t. $\ads_\n$), denoted $\n^*_{\sf reg}$, 
consists of all $\xi$ such that the coadjoint orbit 
$N{\cdot}\xi:=\Ad^*_N(N){\cdot}\xi\subset\n^*$ has the maximal dimension. 
Set $\bxi=\sum_{\beta\in\eus K(\n)}e_{-\beta}\in \n^-\simeq \n^*$. By~\cite[2.4]{jos77}, if $\bxi$ is 
regarded as element of $\n^*$, then $\bxi\in\n^*_{\sf reg}$ and $B{\cdot}\bxi$ is the dense $B$-orbit 
in $\n^*$. Furthermore, if $\n$ is optimal, then  
the stabiliser of $\bxi$ in $\n$, $\n^{\bxi}$, equals \ $\bigoplus_{\beta\in\eus K(\n)}\g_\beta$, which also 
shows that  $\ind\n=\#\eus K(\n)$.
We say that $\bxi$ is a {\it cascade point\/} in $\n^-$. 

For a nilradical $\n$, consider the subalgebra
$\te_{\n}=\bigoplus_{\beta\in\eus K(\n)} [\g_\beta,\g_{-\beta}]\subset\te$ and the
solvable Lie algebra $\ff_{\n}=\tilde\n\oplus \te_{\n}\subset\be$. Note that 
$\te_\n=\te_{\tilde\n}$, $\ff_\n=\ff_{\tilde\n}$, and $\ff_\n$ is a non-abelian ideal of $\be$. 

\begin{prop}    \label{prop:c-Frob}
For any nilradical\/ $\n$, the Lie algebra $\ff_\n$ is Frobenius, i.e., $\ind\ff_\n=0$.
\end{prop}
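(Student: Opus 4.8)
The plan is to compute $\ind \ff_\n$ directly from the structure of $\ff_\n = \tilde\n \oplus \te_\n$ by exhibiting a regular functional with trivial stabiliser. Since $\ff_\n = \ff_{\tilde\n}$, I may assume $\n = \tilde\n$ is optimal, so $\ff_\n = \n \oplus \te_\n$ with $\te_\n = \bigoplus_{\beta \in \eus K(\n)}[\g_\beta,\g_{-\beta}]$, and $\dim\ff_\n = \dim\n + \#\eus K(\n) = \dim\n + \ind\n$ by Proposition~\ref{prop:jos}. The natural candidate for a regular point is a lift of the cascade point: take $\xi = \bxi + \zeta \in \ff_\n^*$, where $\bxi$ restricts to the cascade point on $\n$ (i.e.\ $\xi(e_\gamma) = 1$ iff $\gamma \in \eus K(\n)$ for $\gamma \in \Delta(\n)$) and $\zeta$ is a suitable generic functional on $\te_\n$. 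The claim $\ind\ff_\n = 0$ is equivalent to $\dim\ff_\n$ being even and $\xi$ having trivial stabiliser, i.e.\ the Kirillov form $B_\xi(x,y) = \xi([x,y])$ on $\ff_\n$ being nondegenerate for suitable $\xi$.

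First I would analyse $\ff_\n^\xi \cap \n$. For $x \in \n$, write $x = \sum_{\gamma \in \Delta(\n)} c_\gamma e_\gamma$. From $\xi([x,\n]) = 0$ and the fact that $\bxi$ is a regular point for the $\n$-action whose $\n$-stabiliser (since $\n$ is optimal) is exactly $\bigoplus_{\beta \in \eus K(\n)}\g_\beta$, one gets that $x$ must lie in $\bigoplus_{\beta \in \eus K(\n)}\g_\beta + (\text{correction from } \te_\n)$. Then the condition $\xi([x,\te_\n]) = 0$ must kill the remaining coordinates $c_{\beta}$, $\beta \in \eus K(\n)$: indeed $[h, e_\beta] = \beta(h)e_\beta$ for $h \in \te_\n$, so $\xi([x, h]) = \sum_{\beta \in \eus K(\n)} c_\beta \beta(h)$, and since the roots in $\eus K(\n)$ are strongly orthogonal they are in particular linearly independent on $\te_\n$ once $\te_\n = \bigoplus_{\beta\in\eus K(\n)}\BC\beta^\vee$ — so these functionals $h \mapsto \beta(h)$ on $\te_\n$ are linearly independent (the Cartan matrix of a system of strongly orthogonal roots restricted to the span of their coroots is diagonal with entries $2$, hence invertible). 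Thus for generic $\zeta$ on $\te_\n$ we force all $c_\beta = 0$, giving $\ff_\n^\xi \cap \n = 0$. Second, for $h \in \te_\n$ with $h + x \in \ff_\n^\xi$ (some $x \in \n$), pairing against $\n$ and using that $[h, e_\gamma] = \gamma(h)e_\gamma$ contributes to the $\n$-directions of the bracket, a parallel computation shows $h$ must be orthogonal (under the pairing induced by $\bxi$ through the cascade roots) to enough directions to force $h = 0$. Combining, $\ff_\n^\xi = 0$, so $\ind\ff_\n = 0$.

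The main obstacle I anticipate is making the interplay between the $\n$-part and the $\te_\n$-part of the bracket fully rigorous — in particular verifying that for $h \in \te_\n$ the bracket $[h, \n] \subset \n$ is "seen" by $\xi$ in a way that is transverse to the directions already used to control $\ff_\n^\xi \cap \n$. The cleanest way is probably a triangularity/grading argument: grade $\ff_\n$ by a generic dominant coweight so that $\te_\n$ sits in degree $0$, and observe that $B_\xi$ respects an appropriate pairing between opposite graded pieces, with the degree-$0$ block being $B_\zeta$ on $\te_\n$ (nondegenerate for generic $\zeta$ since $\te_\n$ is abelian of dimension equal to the number of cascade roots, and pairs with the $\g_\beta$-directions via $\zeta(\beta^\vee)$) and the off-diagonal blocks governed by the already-established regularity of $\bxi$ on $\n$. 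I would also need the parity statement $\dim\ff_\n = \dim\n + \ind\n$ to be even: this follows because $\ind\ff_\n$ and $\dim\ff_\n$ have the same parity for any Lie algebra (the generic stabiliser on a coadjoint orbit has even codimension), so once $\ff_\n^\xi = 0$ is shown for one $\xi$, Frobenius-ness is automatic; alternatively one notes $\dim\ff_\n = 2\bb(\n)$ once $\bb(\ff_\n) = \bb(\n)$ is known, but that is proved later in the paper, so I would avoid circularity and rely on the direct stabiliser computation.
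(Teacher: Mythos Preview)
Your overall strategy---computing the stabiliser of the cascade point directly in $\ff_\n^*$---is sound and is precisely the alternative argument the paper mentions in the Remark immediately following its proof. The paper's \emph{main} proof, however, proceeds differently: it invokes the general inequality $\ind\ff+\ind\tilde\n\le 2\,\ind(\ff,\tilde\n)+\dim(\ff/\tilde\n)$ from~\cite[Theorem\,1.4]{cambr} for the pair $(\ff,\tilde\n)$, notes that $\ind\tilde\n=\dim(\ff/\tilde\n)$ by Corollary~\ref{cor:jos}, and then reduces the question to showing that $F$ has a dense orbit in $\tilde\n^*$. This last step is a short tangent-space computation at the cascade point $\bxi$: one has $\tilde\n{\cdot}\bxi=(\tilde\n^{\bxi})^\perp$ and, by strong orthogonality, $\te_\n{\cdot}\bxi=\bigoplus_{\beta\in\eus K(\n)}\g_{-\beta}$, so $\ff{\cdot}\bxi=\tilde\n^-$. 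The paper's route thus avoids any direct analysis of the Kirillov form on $\ff$.

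Your execution, on the other hand, contains a genuine confusion about the role of the auxiliary functional $\zeta$ on $\te_\n$. In fact $\zeta$ is entirely irrelevant and you may take $\zeta=0$. Concretely: since $[\n,\te_\n]\subset\n$ and $\zeta$ vanishes on $\n$, the pairing $\xi([x_1,h])$ never sees $\zeta$; and since $\te_\n$ is abelian, the ``degree-$0$ block'' $B_\xi\vert_{\te_\n\times\te_\n}$ is identically zero for \emph{every} $\zeta$, so it cannot be made nondegenerate. The step you label as an obstacle---killing the $\te_\n$-component $x_0$ of a stabiliser element---is handled not by $\zeta$ but by pairing against $e_\beta$ for $\beta\in\eus K(\n)$: strong orthogonality gives $\bxi([x_1,e_\beta])=0$ (because $\beta'-\beta\notin\Delta$ for distinct $\beta,\beta'\in\eus K$), whence $\xi([x_0+x_1,e_\beta])=\beta(x_0)$. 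The restrictions $\beta\vert_{\te_\n}$ are linearly independent (indeed $\beta_i(\beta_j^\vee)=2\delta_{ij}$), so $x_0=0$. Once $x_0=0$ and the earlier step gives $c_\beta=0$ for $\beta\in\eus K(\n)$, the remaining condition $\bxi([x_1,\n])=0$ forces $x_1\in\n^{\bxi}=\bigoplus_{\beta\in\eus K(\n)}\g_\beta$, hence $x_1=0$. With this correction your direct argument goes through cleanly, with $\xi=\bxi$ and no genericity needed.
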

\begin{proof}   We write $\ff$ for $\ff_\n$ in the proof. 
Since $\tilde\n$ is an ideal of $\ff$, the group $F\subset B$ acts on both $\tilde\n$ and $\tilde\n^*$. Let
$\ind(\ff,\tilde\n)$ denote the minimal codimension of the $F$-orbits in $\tilde\n^*$.
In~\cite[Theorem\,1.4]{cambr}, an upper bound is given on the sum of indices of a Lie algebra and an
ideal in it. For the pair $(\ff,\tilde\n)$, it reads
\[
  \ind\ff+\ind\tilde\n \le 2\,\ind(\ff,\tilde\n) +\dim(\ff/\tilde\n) .
\]
In our case, $\ind\tilde\n=\dim(\ff/\tilde\n)$ (Corollary~\ref{cor:jos}). Hence it suffices to prove that
$\ind(\ff,\tilde\n)=0$, i.e., $F$ has a dense orbit in $\tilde\n^*$. To this end, we use the cascade
point $\bxi\in\tilde\n^*$. Since $\tilde\n^{\bxi}=\bigoplus_{\gamma\in\eus K(\n)}\g_\gamma$, we have 
$\tilde\n{\cdot}\bxi=(\tilde\n^{\bxi})^\perp\simeq\bigoplus_\gamma \g_{-\gamma}$, where $\gamma$ 
runs over $\Delta(\tilde\n)\setminus \eus K(\n)$. On the other hand, the roots in $\eus K(\n)$ are 
strongly orthogonal. Hence $\te_\n{\cdot}\bxi=\bigoplus_{\gamma\in\eus K(\n)} \g_{-\gamma}$ and 
thereby $\ff{\cdot}\bxi=(\tilde\n\oplus\te_\n){\cdot}\bxi=\tilde\n^-$.
\end{proof}

{\bf Remark.} Considering $\bxi$ as element of $\ff^*$, one can prove directly that the stabiliser of ${\bxi}$ in $\ff$  is trivial, hence $\ff{\cdot}\bxi=\ff^*$ and the orbit $F{\cdot}\bxi$ is dense in $\ff^*$.

The Lie algebra $\ff_\n$ is said to be the {\it Frobenius envelope\/} of $\n$. Note that the nilradical of 
$\ff_\n$ is $\tilde\n$. For  $\n=\ut$, the Frobenius envelope $\ff_\ut\subset \be$ is the Lie subalgebra of 
dimension $\dim\ut+\#\eus K$. Hence \ \ 
{$\ff_\ut=\be$ \ {\sl if and only\/} \ if $\#\eus K=\#\Pi$ \ {\sl if and only if\/} \ $\be$ is 
Frobenius.}

\begin{prop}      \label{prop:bb=ravno}
For any $\n$, one has\/ $\bb(\ff_\n)=\bb(\n)$ and\/ $\ff_\n$ has a \CP\ if and only if\/ $\n$ does.
\end{prop}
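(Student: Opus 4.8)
The plan is to establish the two assertions in turn, using the Frobenius property of $\ff_\n$ from Proposition~\ref{prop:c-Frob} and the optimisation formula from Proposition~\ref{prop:jos} and Corollary~\ref{cor:jos}. For the first claim, write $\ff=\ff_\n$. By Proposition~\ref{prop:c-Frob}, $\ind\ff=0$, so $\bb(\ff)=\dim\ff/2$. On the other hand $\dim\ff=\dim\tilde\n+\#\eus K(\n)$ by construction of $\ff$, and Proposition~\ref{prop:jos} gives $\dim\tilde\n+\#\eus K(\n)=\dim\n+\ind\n$. Hence
\[
   \bb(\ff)=\tfrac{1}{2}\dim\ff=\tfrac{1}{2}\bigl(\dim\n+\ind\n\bigr)=\bb(\n).
\]
This settles $\bb(\ff_\n)=\bb(\n)$ with no real difficulty.

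For the equivalence concerning \CP, I would argue that $\n$ has a \CP\ if and only if $\ff$ does, by exhibiting in each direction a \CP\ of one algebra inside the other and invoking the equality $\bb(\ff)=\bb(\n)$ just proved. For the easy direction: suppose $\ah$ is a \CP-ideal in $\n$. Since $\n$ has a \CP, so does its optimisation $\tilde\n$ (Lemma~\ref{lm:optim2}), and $\ah$ is a \CP-ideal in $\tilde\n$; in fact (Remark~\ref{rem:by-prod}) the property of having a \CP\ depends only on $\eus K(\n)=\eus K(\tilde\n)$. So it suffices to treat $\n=\tilde\n$, and by Theorems~\ref{thm:CP-da} and~\ref{thm:CP-shtrikh} we may take $\ah$ to be of the standard form $\n\cap\n_{\{\check\ap\}}$ (or a \CP-ideal of the Heisenberg nilradical). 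In either case $\ah$ is an abelian $\be$-ideal contained in $\tilde\n\subset\ff$; since $\ff$ is a Lie algebra containing $\tilde\n$ as an ideal and $\ah\lhd\tilde\n$ is in fact a $\be$-ideal hence an $\ff$-ideal, $\ah$ is an abelian ideal of $\ff$ with $\dim\ah=\bb(\n)=\bb(\ff)$, i.e.\ a \CP-ideal of $\ff$. Conversely, suppose $\ff$ has a \CP, i.e.\ a maximal isotropic subspace of dimension $\bb(\ff)$; since $\ff$ is solvable with nilradical $\tilde\n$, one argues (as in~\cite[Theorem\,4.1]{ag03}, cf.\ the discussion in the Introduction) that $\ff$ then contains an abelian ideal $\ah$ with $\dim\ah=\bb(\ff)$, and that $\ah$ may be taken inside $\tilde\n$: indeed $\ah\cap\tilde\n$ has codimension at most $\dim(\ff/\tilde\n)=\ind\tilde\n=\ind\tilde\n$ in $\ah$, and a dimension count using $\bb(\ff)=\bb(\tilde\n)$ forces $\ah\subset\tilde\n$ after an adjustment (replacing $\ah$ by a suitable abelian ideal of the same dimension sitting in $\tilde\n$). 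Then $\ah$ is a \CP-ideal of $\tilde\n$, whence $\tilde\n$ — and therefore $\n$, by Remark~\ref{rem:by-prod} — has a \CP.

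The main obstacle I anticipate is the converse direction: moving a \CP\ of $\ff$ \emph{down} into $\tilde\n$. The subtlety is that $\ff\setminus\tilde\n$ contains toral elements, and an abelian ideal of $\ff$ realising $\bb(\ff)$ need not a priori lie in the nilradical; one has to use that $\te_\n$ acts on $\tilde\n^-$ with the cascade point $\bxi$ in a dense orbit (from the proof of Proposition~\ref{prop:c-Frob}) to control how much of the torus can be absorbed, and then re-run the standard "abelian subalgebra $\Rightarrow$ abelian ideal" reduction relative to $\be$. An alternative, cleaner route — which I would actually prefer to write up — bypasses this entirely: by Remark~\ref{rem:by-prod} the \CP\ property of $\n$ depends only on $\eus K(\n)$, and by the analysis in Section~\ref{sect:ohne-CP} (the admissibility criterion~\eqref{eq:admissible}, which only involves $\eus K(\n)$ and the Heisenberg subsets $\eus H_{\beta_i}$, all intrinsic to the cascade) the same criterion applies verbatim to $\ff_\n$, whose nilradical is $\tilde\n$ with $\eus K(\tilde\n)=\eus K(\n)$. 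Hence $\ff_\n$ satisfies the hypotheses of Theorem~\ref{thm:CP-da}/\ref{thm:CP-shtrikh} (suitably read for the solvable algebra $\ff$) exactly when $\n$ does, and one concludes $\ff_\n$ has a \CP\ $\iff$ $\eus K(\n)$ is $\{\beta_1\}$ or lies in a good chain $\iff$ $\n$ has a \CP. I would present the easy direction explicitly and then invoke the cascade-intrinsic characterisation for the converse.
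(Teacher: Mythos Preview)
Your computation of $\bb(\ff_\n)=\bb(\n)$ is correct and matches the paper's one-line argument.

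For the equivalence, the paper's proof of the converse direction ($\ff_\n$ has a \CP\ $\Rightarrow$ $\n$ has a \CP) is much shorter than what you propose, and the step you flag as the main obstacle dissolves once you use the right ambient algebra. The point is that $\ff_\n$ is an ideal of $\be$ (this is stated just before Proposition~\ref{prop:c-Frob}), so the ``abelian subalgebra $\Rightarrow$ abelian $\be$-ideal of the same dimension'' reduction from the Introduction applies verbatim with $\ff_\n$ in place of a nilradical: one obtains an abelian $\be$-ideal $\ah\subset\ff_\n$ with $\dim\ah=\bb(\ff_\n)$. But every abelian ideal of $\be$ lies in $\ut=[\be,\be]$ (an elementary fact: a nonzero toral component would force, via $[\ut,\te]$, root vectors into $\ah$ that fail to commute with it). Hence $\ah\subset\ut\cap\ff_\n=\tilde\n$, so $\ah$ is a \CP-ideal of $\tilde\n$, and Remark~\ref{rem:by-prod} finishes. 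Your first attempt reduces only to an abelian \emph{$\ff$-ideal}, which is why you then struggle to exclude a toral part; reducing instead to a \emph{$\be$-ideal} kills that possibility immediately.

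Your preferred ``cleaner route'' --- reading Theorems~\ref{thm:CP-da}/\ref{thm:CP-shtrikh} for $\ff_\n$ --- does not work as stated: those theorems are about nilradicals of parabolic subalgebras, and $\ff_\n$ is not one. The admissibility criterion~\eqref{eq:admissible} is formulated for roots of an optimal nilradical, and there is no a priori reason the same numerology governs \CP's of the larger solvable algebra $\ff_\n$. So this route is circular unless you first prove exactly the reduction $\ah\subset\tilde\n$ that the paper supplies directly.

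For the forward direction, your argument is correct but unnecessarily heavy: you do not need the classification. If $\n$ has a \CP\ it has a \CP-ideal $\ah\subset\n\subset\ff_\n$, and $\dim\ah=\bb(\n)=\bb(\ff_\n)$ already makes $\ah$ a \CP\ of $\ff_\n$.
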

\begin{proof} 
Since $\dim\ff_\n=\dim\n+\ind\n$~\cite[Prop.\,2.6]{jos77} and $\ind\ff_\n=0$, we obtain the first assertion.

If $\ah\lhd\ff_\n$ is a \CP-ideal, then it is an abelian $\be$-ideal, hence $\ah\subset\ut$. Then 
$\ah\subset\ut\cap\ff_n=\tilde\n$ and $\ah$ is a \CP-ideal in $\tilde\n$. Then $\n$ also has a \CP, see 
Remark~\ref{rem:by-prod}.
\end{proof}

For an arbitrary nilradical $\n$, Joseph's formula $\ind\n=\dim\ff_\n-\dim\n$ shows that $\ind\n$ can be 
considerably larger than $\rk\g$. On the other hand, if $\n$ is optimal, then one always has 
$\ind\n=\#\eus K(\n)\le\rk\g$. In fact, there is a more informative assertion.

\begin{thm}                   \label{thm:summa-opt}
If\/ $\p$ is an optimal parabolic subalgebra and\/ $\n=\p^{\sf nil}$, then $\ind\p+\ind\n=\rk\g$. 
\end{thm}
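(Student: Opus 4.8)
The plan is to compute $\ind\p$ and $\ind\n$ separately and add them. For the nilradical part, optimality gives $\ind\n=\#\eus K(\n)=\#\eus K$ directly by Corollary~\ref{cor:jos} (since $\theta\in\eus K(\n)$ for any proper parabolic, but more relevantly, for an optimal $\p$ the set $\Delta(\n)$ contains all of $\eus K$ it can — actually we just use $\ind\n=\#\eus K(\n)$). So the substance of the theorem is the claim $\ind\p=\rk\g-\#\eus K(\n)$. First I would recall the standard description of $\p^*$: using $\g=\p\oplus\n^-$ as $P$-modules one has $\p^*\simeq\g/\n^-\simeq\el\oplus\n$ where $\el$ is the Levi and $\n=\p^{\sf nil}$, and the coadjoint action is the natural one. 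Thus computing $\ind\p$ amounts to finding the generic stabiliser dimension for the $P$-action on $\el\oplus\n$ (equivalently on $\el^*\oplus\n^*$, which in root terms is $\el\oplus\n^-$).

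The key step is to produce an explicit generic point and compute its stabiliser. The natural candidate is $\eta=h+\bxi$ where $h$ is a generic semisimple element of $\te\subset\el$ (regular in $\el$) and $\bxi=\sum_{\beta\in\eus K(\n)}e_{-\beta}$ is the cascade point in $\n^-$, as introduced just before Proposition~\ref{prop:c-Frob}. The idea is that the $\el$-component forces the stabiliser to lie in (a conjugate of) a Cartan-type subalgebra direction, while the cascade point, being regular in $\n^*$ with stabiliser $\bigoplus_{\beta\in\eus K(\n)}\g_\beta$ (by \cite[2.4]{jos77} and optimality), cuts things down further. Concretely: if $x=x_\el+x_\n\in\p^\eta$ with $x_\el\in\el$, $x_\n\in\n$, then the $\el^*$-component of $(\ads x)\eta$ vanishing forces $[x_\el,h]=0$ modulo contributions; choosing $h$ regular semisimple in $\el$ gives $x_\el\in\te_\el$ (the part of $\te$ in $\el$). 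Then the $\n^*$-component condition, $\mathsf{pr}_{\n^-}([x_\el+x_\n,\,h+\bxi])=0$, combined with $x_\el\in\te$ acting diagonally on $\bxi$ and $[x_\n,h]\in\n^-$ being nonzero unless $x_\n$ is supported on roots killed by $h$, forces $x_\n\in\bigoplus_{\beta\in\eus K(\n)}\g_\beta$ and then the $\te_\el$-part of $x_\el$ to annihilate $\bxi$. Since the roots of $\eus K(\n)$ are strongly orthogonal and $\el$ is the standard Levi of the optimal $\p$, one checks (using $\eus K(\el)\subset\eus K(\g)$, the optimality criterion of Remark~\ref{rem:reduct-g}) that the linear span of the coroots $\{\beta^\vee:\beta\in\eus K(\n)\}$ together with $\te_\el$ spans $\te$; hence the conditions pin the stabiliser down to dimension $\rk\g-\#\eus K(\n)=\rk\g-\ind\n$, giving $\ind\p\le\rk\g-\ind\n$, and exhibiting a stabiliser of exactly that dimension gives equality.

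The main obstacle I anticipate is the bookkeeping that shows the $\n$-component $x_\n$ of a stabilising element is genuinely forced to lie in $\bigoplus_{\beta\in\eus K(\n)}\g_\beta$ and contributes no new directions: one must rule out ``mixed'' cancellations between $[x_\el,\bxi]$ and $[x_\n,h]$ in $\n^-$. This is where the strong orthogonality of the cascade roots and the fact that $\bxi$ is $B$-regular (so $\tilde\n\cdot\bxi=(\tilde\n^{\bxi})^\perp$, as used in the proof of Proposition~\ref{prop:c-Frob}) do the work: since $[x_\el,\bxi]$ lies in $\bigoplus_{\beta\in\eus K(\n)}\g_{-\beta}$ while, for $x_\n$ not supported on $\eus K(\n)$, $\mathsf{pr}_{\n^-}[x_\n,h]$ has a component outside $\bigoplus_{\beta\in\eus K(\n)}\g_{-\beta}$ (generic $h$), these cannot cancel. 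An alternative, cleaner route — which I would present if the direct computation gets unwieldy — is to invoke the inequality of \cite[Theorem\,1.4]{cambr} for the ideal $\n\lhd\p$: $\ind\p+\ind\n\le 2\,\ind(\p,\n)+\dim(\p/\n)=2\,\ind(\p,\n)+\dim\el$, combined with the reverse bound $\ind\p\ge\rk\el-?$; but since the precise form needed is $\ind\p+\ind\n=\rk\g$ exactly, I expect the explicit generic point $h+\bxi$ to be the decisive ingredient, with Joseph's description of $\bxi$ and the optimality of $\p$ supplying both inequalities.
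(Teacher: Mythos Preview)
Your approach is in the same spirit as the paper's --- produce an explicit generic point of the form ``something from the Levi side plus the cascade point $\bxi$'' and compute its stabiliser --- but your execution has real gaps, and the paper's key technical idea is precisely what you are missing.

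First, two slips. You write $\p^*\simeq\g/\n^-$, but in fact $\p^*\simeq\g/\n\simeq\p^-$ (the annihilator of $\p$ under the Killing form is $\n$, not $\n^-$). More seriously, you claim $[x_\n,h]\in\n^-$; but $x_\n\in\n$ and $h\in\te$, so $[x_\n,h]\in\n$, and this term vanishes after projection to $\p^-$. Hence your proposed mechanism for forcing $x_\n$ into $\bigoplus_{\beta\in\eus K(\n)}\g_\beta$ via the $\n^-$-component does not work as stated. What actually survives in the $\el$-component is the cross term $\mathsf{pr}_\el[x_\n,\bxi]$, which you never control; so you cannot conclude $x_\el\in\te$ from $[x_\el,h]+\mathsf{pr}_\el[x_\n,\bxi]=0$. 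You also do not argue that $h+\bxi$ is $P$-generic; exhibiting one stabiliser is not enough.

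The paper avoids all of this by replacing your decomposition $\p=\el\oplus\n$ with $\p=\es\oplus\ff$, where $\ff=\ff_\n=\tilde\n\oplus\te_\n$ is the Frobenius envelope (Proposition~\ref{prop:c-Frob}) and $\es$ is the reductive complement with $\es\cap\te=(\te_\n)^\perp$, so that $\rk\es=\rk\g-\#\eus K(\n)$. This splitting is tailored to the cascade: one has $[\es,\bxi]=0$ (since $\es\cap\te$ annihilates every $\beta\in\eus K(\n)$ and the root part of $\es$ commutes with $\bxi$ by optimality) and $[\ff,\es]\subset\n$, which is killed by the projection to $\p^-$. These two facts make the computation of $\p\cdot(\eta'+\bxi)$ decouple completely into $\es\cdot\eta'\oplus\ff\cdot\bxi$, and the Frobenius property of $\ff$ then gives $\dim(\ff\cdot\bxi)=\dim\ff$ with trivial intersection with $\es$. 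Genericity of $\eta'+\bxi$ is handled beforehand: since $F$ has an open orbit in $\ff^-\simeq\ff^*$ containing $\bxi$, any $P$-generic point can be moved by $F\subset P$ to one of the form $\eta'+\bxi$.

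So the missing idea is not a minor bookkeeping fix but the passage from $(\el,\n)$ to $(\es,\ff)$; once you have that, the cross terms you worry about simply are not there.
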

\begin{proof}
Because $\ind\n=\#\eus K(\n)$, we have to prove that $\ind\p=\rk\g-\#\eus K(\n)$. For simplicity, we again 
write $\ff$ for the Frobenius envelope of the optimal nilradical $\n$.
We have 
\centerline{$\p=\el\oplus\n=\es\oplus\te_\n\oplus\n=\es\oplus\ff$,}
\\[.6ex] 
where $\es$ is a reductive Lie algebra such that $[\el,\el]\subset\es \subset\el$ and 
$\es\cap\te=(\te_\n)^\perp$. Therefore $\rk\es=\rk\g-\#\eus K(\n)$. 
Accordingly, $\p^-=\es\oplus\ff^-$. Since $\p^*\simeq \g/\n$ as $\p$-module, we identify $\p^*$ with 
$\p^-$ and regard the cascade point $\bxi=\sum_{\beta\in\eus K(\n)}e_{-\beta}$ as element of  
$\ff^-\subset\p^-\simeq\p^*$. 

Let $\ads_\p: \p\to\mathfrak{gl}(\p^-)$ (resp. $\Ad^*_P: P\to GL(\p^-)$) denote the coadjoint 
representation of $\p$ (resp. $P$). Since $\ff\lhd\p$ and $\p^-=\p^*\to \ff^*=\ff^-$ is a surjective
homomorphism of $F$-modules, one easily verifies for any $g\in F$ that
\begin{itemize}
\item[\sf (1)] \  $\Ad^*_P(g){\cdot}\eta=\eta$ for any $\eta\in\es$;
\item[\sf (2)] \  $\Ad^*_P(g){\cdot}\xi-\Ad^*_F(g){\cdot}\xi\in\es$  for any $\xi\in\ff^-$. 
In other words, $\Ad^*_P(g){\cdot}\xi=\Ad^*_F(g){\cdot}\xi+\tilde\eta$ for some $\tilde\eta\in\es$.
\end{itemize}

\noindent
Since $\ff$ is Frobenius (Prop.~\ref{prop:c-Frob}), $F$ has the open orbit in $\ff^*$.
If $\Omega\subset\ff^*$ is this $F$-orbit, then we identify $\Omega$ with the open subset of $\ff^-$
that contains $\bxi$.
Let $\mu=\eta+\xi\in\p^-$ be a $P$-generic point, where $\eta\in\es$ and $\xi\in\ff^-$. Without loss of 
generality, we may assume that $\xi\in\Omega$. It follows from {\sf (2)} above that there is $g_0\in F$ 
such that $\Ad^*_P(g_0){\cdot}\xi=\bxi+\tilde\eta$. Hence 
$\Ad^*_P(g_0){\cdot}(\xi+\eta)=\bxi+\eta+\tilde\eta=\bxi+\eta'$. Therefore, a $P$-generic point in $\p^-$ 
can be taken as a sum of $\bxi$ and an $S$-generic point $\eta'\in\es$.

Our next goal is to compute the dimension of 
$\ads_\p(\p){\cdot}(\eta'+\bxi)=:\p{\cdot}(\eta'+\bxi)$. We have
\[
   \p{\cdot}(\eta'+\bxi)=(\es+\ff){\cdot}(\eta'+\bxi)=\es{\cdot}(\eta'+\bxi)+\ff{\cdot}(\eta'+\bxi) .
\]
Note that $[\ff,\es]\subset\n$, which it is zero in $\p^-=\p^*$. Hence $\ff{\cdot}\eta'=0$ for any $\eta'\in\es$. 
On the other hand, the construction of $\es$ via
$\eus K(\n)$ and properties of $\eus K(\n)$ show that  $[\es,\bxi]=0$.

Therefore, $\p{\cdot}(\eta'+\bxi)=\es{\cdot}\eta'+\ff{\cdot}\bxi$.
Here $\es{\cdot}\eta'\subset \es\subset\p^-$ and $\dim\es{\cdot}\eta'=\dim\es-\rk\es$. The elements of 
$\ff{\cdot}\bxi$ can be written  as
$\ads_\p (x){\cdot}\bxi=\xi_1+\xi_2$, $x\in\ff$, where $\xi_1\in\ff^-$ and $\xi_2\in\es$. The Lie algebra analogue of {\sf (2)} shows that $\xi_1=\ad_\ff^*(x){\cdot}\bxi$. Since $\ff$ is Frobenius and
$\bxi$ is an element of the open $F$-orbit in $\ff^*$, we have $\xi_1\ne 0$ for any $x\ne 0$. Hence
$\es{\cdot}\eta'\cap\ff{\cdot}\bxi=\{0\}$ and thereby 
\[
    \dim(\es{\cdot}\eta'+\ff{\cdot}\bxi)=(\dim\es -\rk\es)+\dim\ff=\dim\p - (\rk\g- \#\eus K(\n)) .
\]
Thus, $\ind\p= \rk\g -\#\eus K(\n)$, as required.
\end{proof}

\begin{rmk}    \label{rem:history}
(1) The fact that $\ind\be+\ind\ut=\rk\g$ is mentioned in \cite[Remark~1.5.1]{cambr} together with some hints 
on a proof. It is also proved in~\cite[Corollary\,1.5]{cambr} that $\ind\p+\ind\p^{\sf nil}\le \dim\el$, where 
$\el$ is a Levi subalgebra of $\p$. On the other hand, I conjectured in \cite[Section\,6]{cambr} that
$\ind\p+\ind\p^{\sf nil}\ge\rk\g$ for any parabolic $\p$. This has been proved afterwards by 
R.~Yu~\cite{yu} via the use of a general formula for the index of a parabolic subalgebra. Yu also points 
out the cases, where $\ind\p+\ind\p^{\sf nil}=\rk\g$. However, the special case of the optimal parabolic subalgebras, where the proof is simpler and shorter, deserves to be presented, too.

(2) One can similarly prove that if $\p$ is an {\sl optimal\/} parabolic subalgebra and $\rr:=\es\oplus\n$, then $\ind\rr=\rk\g$. Therefore, associated with such $\p$, we obtain two vector
space decompositions of $\g$ with an interesting property:
\[ 
  \g=\p\oplus\n^-  \quad \text{and} \quad 
  \g=\rr\oplus \ff^-_{\n} .
\] 
In both cases, the sum of indices of summands equals the index of $\g$ ($=\rk\g$).
\end{rmk}

We have already encountered several instances, where $\bb(\q')=\bb(\q)$ for different Lie algebras
$\q'\subset \q$ (see Corollary~\ref{cor:jos} and Proposition~\ref{prop:bb=ravno}). 
We conclude this section with a simple observation that may have interesting
invariant-theoretic applications. Recall that the algebra of symmetric invariants, $\gS(\q)^\q$, is also the Poisson centre of $\gS(\q)$.

\begin{prop}     \label{prop:inv-th}
\leavevmode\par
\begin{itemize}
\item[\sf (i)] \ If\/ $\q'\subset\q$ \ are Lie algebras and $\bb(\q')=\bb(\q)$, then $\gS(\q)^\q\subset \gS(\q')$.
\item[\sf (ii)] \ In particular, if\/ $\ah\subset\q$ is a \CP, then $\gS(\q)^\q\subset \gS(\ah)$.
\item[\sf (iii)] \ Moreover, if\/ $\ah_1,\dots,\ah_k$ are different \CP\ in $\q$, then
$\gS(\q)^\q\subset \gS(\bigcap_{i=1}^k\ah_i)$.
\end{itemize}
\end{prop}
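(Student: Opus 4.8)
The plan is to prove (i) by a direct computation with the Kirillov forms on $\q$, and then to deduce (ii) and (iii) formally.

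For the reduction I would use two standard facts. For $f\in\gS(\q)^\q$, invariance says precisely that the differential $d_\xi f$, regarded as an element of $\q^{**}=\q$, lies in the stabiliser $\q^\xi$ for every $\xi\in\q^*$. And, fixing a vector-space complement $\q=\q'\oplus\mathfrak m$ so that $\gS(\q)=\gS(\q')\otimes\gS(\mathfrak m)$, one has $f\in\gS(\q')$ if and only if the $\mathfrak m$-component of $d_\xi f$ vanishes for every $\xi$, i.e.\ $d_\xi f\in\q'$ for all $\xi$. Hence (i) reduces to the claim that $\q^\xi\subseteq\q'$ for all $\xi$ in a dense open subset of $\q^*$: then $d_\xi f\in\q'$ there, this locus is Zariski closed, so it equals $\q^*$, and $f\in\gS(\q')$.

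To prove the claim, set $c=\codim_\q\q'$; the hypothesis $\bb(\q')=\bb(\q)$ is precisely the identity $\ind\q'=\ind\q+c$. Since the restriction map $\q^*\to(\q')^*$ is linear and surjective, hence dominant, for $\xi$ in a dense open set the functional $\xi':=\xi|_{\q'}$ is regular in $(\q')^*$; for such $\xi$ we have $\dim\q^\xi=\ind\q=:r$ and $\dim\q'^{\xi'}=\ind\q'$. Let $B=B_\xi$ be the form $B(x,y)=\langle\xi,[x,y]\rangle$ on $\q$; then $\mathrm{rad}\,B=\q^\xi$, and because $\q'$ is a subalgebra the restriction $B|_{\q'}$ is the Kirillov form of $\xi'$, so $\mathrm{rad}(B|_{\q'})=\q'^{\xi'}$. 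Counting ranks, $\rank(B|_{\q'})=\dim\q'-\ind\q'=(\dim\q-c)-(r+c)=\rank B-2c$. On the other hand, put $R=\q^\xi$ and $e=\dim(\q'\cap R)$, and pass to $\q/R$, on which $B$ induces a nondegenerate form. Since $\q'\cap R\subseteq\mathrm{rad}(B|_{\q'})$, quotienting does not change the rank, so $\rank(B|_{\q'})$ equals the rank of the induced form on the image $(\q'+R)/R\subseteq\q/R$, a subspace of codimension $c-r+e$. As the restriction of a nondegenerate form to a subspace of codimension $k$ has rank at least $\dim(\q/R)-2k$, we get $\rank(B|_{\q'})\ge(\dim\q-r)-2(c-r+e)=\rank B-2c+2(r-e)$. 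Comparing with $\rank(B|_{\q'})=\rank B-2c$ forces $e\ge r$; since trivially $e\le\dim R=r$, we conclude $e=r$, i.e.\ $R=\q^\xi\subseteq\q'$, which proves (i).

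For (ii): a \CP\ $\ah$ is abelian, so $\ind\ah=\dim\ah$ and $\bb(\ah)=\dim\ah=\bb(\q)$, and (i) applies. For (iii): by (ii) each $\gS(\ah_i)$ contains $\gS(\q)^\q$, so it remains to observe that $\bigcap_i\gS(\ah_i)=\gS(\bigcap_i\ah_i)$ inside $\gS(\q)$; indeed $\gS(\ah_i)$ is the set of polynomial functions on $\q^*$ constant along $\ah_i^\perp$, and a polynomial is constant along every $\ah_i^\perp$ exactly when it is constant along $\sum_i\ah_i^\perp=(\bigcap_i\ah_i)^\perp$, i.e.\ exactly when it lies in $\gS(\bigcap_i\ah_i)$. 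The only substantive step is the generic inclusion $\q^\xi\subseteq\q'$, and the delicate point there is that the hypothesis enters solely through the rank identity $\rank(B_\xi|_{\q'})=\rank B_\xi-2c$ (together with the elementary fact that $\xi|_{\q'}$ is regular in $(\q')^*$ for generic $\xi$); everything else is routine bilinear-form bookkeeping and uses only finite-dimensionality of $\q$, not the algebraic-group structure.
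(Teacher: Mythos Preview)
Your argument is correct, and it is genuinely different from the paper's. The paper proves (i) by invoking Sadetov's theorem: there exists a Poisson-commutative subalgebra $\ca'\subset\gS(\q')$ with $\trdeg\ca'=\bb(\q')=\bb(\q)$; since $\gS(\q)^\q$ is the Poisson centre, the algebra generated by $\ca'$ and $\gS(\q)^\q$ is still Poisson-commutative, so its transcendence degree cannot exceed $\bb(\q)=\trdeg\ca'$. If some $f\in\gS(\q)^\q$ lay outside $\gS(\q')$, it would be transcendental over $\mathrm{Frac}(\gS(\q'))$ (since $\gS(\q)$ is a polynomial ring over $\gS(\q')$), hence over $\mathrm{Frac}(\ca')$, forcing the transcendence degree up --- a contradiction. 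Parts (ii) and (iii) are handled exactly as you do.

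Your route bypasses both Sadetov's theorem and the transcendence-degree bound for Poisson-commutative subalgebras: everything reduces to linear algebra with the Kirillov form, and you extract the sharper geometric statement that $\q^\xi\subset\q'$ for generic $\xi\in\q^*$. This is more self-contained and arguably gives more information. The paper's argument, on the other hand, is shorter once those two ingredients are granted, and makes transparent \emph{why} the hypothesis $\bb(\q')=\bb(\q)$ is the right one: it is exactly what lets a maximal Poisson-commutative subalgebra of $\gS(\q')$ remain maximal in $\gS(\q)$.
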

\begin{proof}
{\sf (i)} \ By Sadetov's theorem, there is a Poisson-commutative subalgebra $\ca'\subset\gS(\q')$
such that $\trdeg\ca'=\bb(\q')$~\cite{sad}, see also Introduction. By the assumption, $\ca'$ is
also a Poisson-commutative subalgebra of maximal transcendence degree in $\gS(\q)$. The 
subalgebra of $\gS(\q)$ generated by $\ca'$ and $\gS(\q)^\q$, say $\ca$, is still Poisson-commutative
and if $\gS(\q)^\q\not\subset \gS(\q')$, then $\trdeg\ca>\trdeg\ca'$.

{\sf (ii)} \ If $\ah$ is a \CP\ in $\q$, then $\bb(\ah)=\dim\ah=\bb(\q)$.

{\sf (iii)} \ This follows from the fact that \ $\bigcap_{i=1}^k \gS(\ah_i)=\gS(\bigcap_{i=1}^k\ah_i)$.
\end{proof}

In a forthcoming publication, we provide some applications of this result to symmetric invariants of
the nilradicals of parabolic subalgebras in $\g$.

\section{The elements of $\eus K$ and Hasse diagrams}
\label{sect:tables}

\noindent
In this section, we provide the lists of cascade elements and the Hasse diagrams of cascade posets 
$\eus K$ for all simple Lie algebras, see Fig.~\ref{fig:An}--\ref{fig:En}. To each node $\beta_j$ in the 
Hasse diagram, the set $\Phi(\beta_j)\subset\Pi$ is attached. The main features are:
\begin{itemize}
\item 
$\Pi=\{\ap_1,\dots,\ap_{\rk \g}\}$ and the numbering of $\Pi$ 
follows~\cite[Table\,1]{VO}. 
\item The commutative simple roots in the sets $\Phi(\beta_j)$, $j=1,\dots,m$,  are underlined.
\item The numbering of the $\beta_i$'s  in the lists corresponds to that in the figures.
\end{itemize}
We use below the standard notation for roots of the classical Lie algebras, see~\cite[Table\,1]{VO}.

\noindent
{\it\bfseries The list of cascade elements for the classical Lie algebras}:
\begin{description}
\item[$\GR{A}{n}, n\ge 2$]  \ $\beta_i=\esi_i-\esi_{n+2-i}=\ap_i+\dots +\ap_{n+1-i}$ \ ($i=1,2,\dots,\left[\frac{n+1}{2}\right]$);
\item[$\GR{C}{n}, n\ge 1$]  \ $\beta_i=2\esi_i=2(\ap_i+\dots+\ap_{n-1})+\ap_n$ \ ($i=1,2,\dots,n-1$) and 
$\beta_n=2\esi_n=\ap_n$;
\item[$\GR{B}{2n}$, $\GR{D}{2n}$, $\GR{D}{2n+1}$ ($n\ge 2$)]  \ 
$\beta_{2i-1}=\esi_{2i-1}+\esi_{2i}$, $\beta_{2i}=\esi_{2i-1}-\esi_{2i}$ \ ($i=1,2,\dots,n$);
\item[$\GR{B}{2n+1}, n\ge 1$] \ here $\beta_1,\dots,\beta_{2n}$ are as above and $\beta_{2n+1}=\esi_{2n+1}$;
\end{description}

\noindent
For all orthogonal series, we have $\beta_{2i}=\ap_{2i-1}$, $i=1,\dots,n$, while formulae for 
$\beta_{2i-1}$ via $\Pi$ slightly differ for different series. E.g. for $\GR{D}{2n}$ one has
$\beta_{2i-1}=\ap_{2i-1}+2(\ap_{2i}+\dots+\ap_{2n-2})+\ap_{2n-1}+\ap_{2n}$ ($i=1,2,\dots,n-1$)
and $\beta_{2n-1}=\ap_{2n}$.

\noindent
{\it\bfseries The list of cascade elements for the exceptional Lie algebras}:
\begin{description}
\item[$\GR{G}{2}$]  \ $\beta_1=(32)=3\ap_1+2\ap_2, \ \beta_2=(10)=\ap_1$;
\item[$\GR{F}{4}$]   \ $\beta_1=(2432)=2\ap_1+4\ap_2+3\ap_3+2\ap_4,\ \beta_2=(2210),\ \beta_3=(0210),\ \beta_4=(0010)$;
     \item[$\GR{E}{6}$] \  
  $\beta_1=$\raisebox{-2.1ex}{\begin{tikzpicture}[scale= .85, transform shape]
\node (a) at (0,0) {1}; \node (b) at (.2,0) {2};
\node (c) at (.4,0) {3}; \node (d) at (.6,0) {2};
\node (e) at (.8,0) {1}; \node (f) at (.4,-.4) {2};
\end{tikzpicture}}\!\!, 
  $\beta_2=$\raisebox{-2.1ex}{\begin{tikzpicture}[scale= .85, transform shape]
\node (a) at (0,0) {1}; \node (b) at (.2,0) {1};
\node (c) at (.4,0) {1}; \node (d) at (.6,0) {1};
\node (e) at (.8,0) {1}; \node (f) at (.4,-.4) {0};
\end{tikzpicture}}, 
  $\beta_3=$\raisebox{-2.1ex}{\begin{tikzpicture}[scale= .85, transform shape]
\node (a) at (0,0) {0}; \node (b) at (.2,0) {1};
\node (c) at (.4,0) {1}; \node (d) at (.6,0) {1};
\node (e) at (.8,0) {0}; \node (f) at (.4,-.4) {0};
\end{tikzpicture}},
  $\beta_4$\,=\raisebox{-2.1ex}{\begin{tikzpicture}[scale= .85, transform shape]
\node (a) at (0,0) {0}; \node (b) at (.2,0) {0};
\node (c) at (.4,0) {1}; \node (d) at (.6,0) {0};
\node (e) at (.8,0) {0}; \node (f) at (.4,-.4) {0};
\end{tikzpicture}}=\,$\ap_3$;
     \item[$\GR{E}{7}$] \ 
  $\beta_1$=\raisebox{-2.1ex}{\begin{tikzpicture}[scale= .85, transform shape]
\node (a) at (0,0) {1}; \node (b) at (.2,0) {2}; \node (c) at (.4,0) {3};
\node (d) at (.6,0) {4}; \node (e) at (.8,0) {3}; \node (f) at (1,0) {2};
\node (g) at (.6,-.4) {2};
\end{tikzpicture}},
  $\beta_2$=\raisebox{-2.1ex}{\begin{tikzpicture}[scale= .85, transform shape]
\node (a) at (0,0) {1}; \node (b) at (.2,0) {2}; \node (c) at (.4,0) {2};
\node (d) at (.6,0) {2}; \node (e) at (.8,0) {1}; \node (f) at (1,0) {0};
\node  at (.6,-.4) {1};
\end{tikzpicture}},
  $\beta_3$=\raisebox{-2.1ex}{\begin{tikzpicture}[scale= .85, transform shape]
\node (a) at (0,0) {1}; \node (b) at (.2,0) {0}; \node (c) at (.4,0) {0};
\node (d) at (.6,0) {0}; \node (e) at (.8,0) {0}; \node (f) at (1,0) {0};
\node (g) at (.6,-.4) {0};
\end{tikzpicture}}=\,$\ap_1$,
  $\beta_4$=\raisebox{-2.1ex}{\begin{tikzpicture}[scale= .85, transform shape]
\node (a) at (0,0) {0}; \node (b) at (.2,0) {0}; \node (c) at (.4,0) {1};
\node (d) at (.6,0) {2}; \node (e) at (.8,0) {1}; \node (f) at (1,0) {0};
\node (g) at (.6,-.4) {1};
\end{tikzpicture}},
  $\beta_5$=\raisebox{-2.1ex}{\begin{tikzpicture}[scale= .85, transform shape]
\node (a) at (0,0) {0}; \node (b) at (.2,0) {0}; \node (c) at (.4,0) {1};
\node (d) at (.6,0) {0}; \node (e) at (.8,0) {0}; \node (f) at (1,0) {0};
\node (g) at (.6,-.4) {0};
\end{tikzpicture}}=\,$\ap_3$,   \\
  $\beta_6$=\raisebox{-2.1ex}{\begin{tikzpicture}[scale= .85, transform shape]
\node (a) at (0,0) {0}; \node (b) at (.2,0) {0}; \node (c) at (.4,0) {0};
\node (d) at (.6,0) {0}; \node (e) at (.8,0) {1}; \node (f) at (1,0) {0};
\node (g) at (.6,-.4) {0};
\end{tikzpicture}}=\,$\ap_5$,
  $\beta_7$=\raisebox{-2.1ex}{\begin{tikzpicture}[scale= .85, transform shape]
\node (a) at (0,0) {0}; \node (b) at (.2,0) {0}; \node (c) at (.4,0) {0};
\node (d) at (.6,0) {0}; \node (e) at (.8,0) {0}; \node (f) at (1,0) {0};
\node (g) at (.6,-.4) {1};
\end{tikzpicture}}=\,$\ap_7$;
      \item[$\GR{E}{8}$] \ 
  $\beta_1$=\raisebox{-2.1ex}{\begin{tikzpicture}[scale= .85, transform shape]
\node (a) at (0,0) {2}; \node (b) at (.2,0) {3}; \node (c) at (.4,0) {4};
\node (d) at (.6,0) {5}; \node (e) at (.8,0) {6}; \node (f) at (1,0) {4}; \node (g) at (1.2,0) {2};
\node (h) at (.8,-.4) {3};
\end{tikzpicture}},
  $\beta_2$=\raisebox{-2.1ex}{\begin{tikzpicture}[scale= .85, transform shape]
\node (h) at (-.2,-.0) {0}; \node (a) at (0,0) {1}; \node (b) at (.2,0) {2}; \node (c) at (.4,0) {3};
\node (d) at (.6,0) {4}; \node (e) at (.8,0) {3}; \node (f) at (1,0) {2};
\node (g) at (.6,-.4) {2};
\end{tikzpicture}},
  $\beta_3$=\raisebox{-2.1ex}{\begin{tikzpicture}[scale= .85, transform shape]
\node (h) at (-.2,-.0) {0}; \node (a) at (0,0) {1}; \node (b) at (.2,0) {2}; \node (c) at (.4,0) {2};
\node (d) at (.6,0) {2}; \node (e) at (.8,0) {1}; \node (f) at (1,0) {0};
\node  at (.6,-.4) {1};
\end{tikzpicture}},
  $\beta_4$=\raisebox{-2.1ex}{\begin{tikzpicture}[scale= .85, transform shape]
\node (h) at (-.2,-.0) {0}; \node (a) at (0,0) {1}; \node (b) at (.2,0) {0}; \node (c) at (.4,0) {0};
\node (d) at (.6,0) {0}; \node (e) at (.8,0) {0}; \node (f) at (1,0) {0};
\node (g) at (.6,-.4) {0};
\end{tikzpicture}}=\,$\ap_2$,
  $\beta_5$=\raisebox{-2.1ex}{\begin{tikzpicture}[scale= .85, transform shape]
\node (h) at (-.2,-.0) {0}; \node (a) at (0,0) {0}; \node (b) at (.2,0) {0}; \node (c) at (.4,0) {1};
\node (d) at (.6,0) {2}; \node (e) at (.8,0) {1}; \node (f) at (1,0) {0};
\node (g) at (.6,-.4) {1};
\end{tikzpicture}},  \\
  $\beta_6$=\raisebox{-2.1ex}{\begin{tikzpicture}[scale= .85, transform shape]
\node (h) at (-.2,-.0) {0}; \node (a) at (0,0) {0}; \node (b) at (.2,0) {0}; \node (c) at (.4,0) {1};
\node (d) at (.6,0) {0}; \node (e) at (.8,0) {0}; \node (f) at (1,0) {0};
\node (g) at (.6,-.4) {0};
\end{tikzpicture}}=\,$\ap_4$, 
  $\beta_7$=\raisebox{-2.1ex}{\begin{tikzpicture}[scale= .85, transform shape]
\node (h) at (-.2,-.0) {0}; \node (a) at (0,0) {0}; \node (b) at (.2,0) {0}; \node (c) at (.4,0) {0};
\node (d) at (.6,0) {0}; \node (e) at (.8,0) {1}; \node (f) at (1,0) {0};
\node (g) at (.6,-.4) {0};
\end{tikzpicture}}=\,$\ap_6$,
  $\beta_8$=\raisebox{-2.1ex}{\begin{tikzpicture}[scale= .85, transform shape]
\node (h) at (-.2,-.0) {0}; \node (a) at (0,0) {0}; \node (b) at (.2,0) {0}; \node (c) at (.4,0) {0};
\node (d) at (.6,0) {0}; \node (e) at (.8,0) {0}; \node (f) at (1,0) {0};
\node (g) at (.6,-.4) {1};
\end{tikzpicture}}=\,$\ap_8$;
\end{description}

\noindent
If $\beta\in \eus K$ is a simple root, then $\Phi(\beta)=\{\beta\}$ and $\beta$ is necessarily
a minimal element of $\eus K$. Conversely, if $\beta$ is a
minimal element of $\eus K$ and $\Phi(\beta)=\{\ap\}$, a sole simple root, then $\ap=\beta$. This 
happens in all cases except $\GR{A}{2n}$. For instance, $\beta_n=\ap_n$ for $\GR{A}{2n-1}$, 
$\beta_4=\ap_3$ for $\GR{F}{4}$, 
$\beta_3=\ap_1$ for $\GR{E}{7}$, and $\beta_4=\ap_2$ for $\GR{E}{8}$.

\begin{figure}[htb]    
\caption{The marked cascade posets for  $\GR{A}{p}$ ($p\ge 2$), $\GR{C}{p}$ ($p\ge 1$), 
$\GR{F}{4}$}  
\label{fig:An}
\vskip1.5ex
\begin{center}
\begin{tikzpicture}[scale= .62] 
\node (2) at (4.5,6.3) {$\beta_1$};
\node (3) at (4.5,4.2) {$\dots$};
\node (4) at (4.5,2.1) {$\beta_{n-1}$};
\node (5) at (4.5,0) {$\beta_{n}$};
\foreach \from/\to in {2/3, 3/4, 4/5} \draw [-,line width=.7pt] (\from) -- (\to);

\draw (6.4,6.3) node {${\color{forest}\{} \un{\un{{\color{forest}\ap_{1}}}}, \un{\un{{\color{forest}\ap_{2n-1}}}}  {\color{forest}\}}$ };
\draw (7, 2.1) node {${\color{forest}\{}  \un{\un{ {\color{forest}\ap_{n-1}}}}, \un{\un{{\color{forest}\ap_{n+1}}}}  {\color{forest}\}}$ };
\draw (6,0) node {${\color{forest}\{} \un{\un{{\color{forest}\ap_n}}} {\color{forest}\}}$};

\draw (3,3.6) node {$\GR{A}{2n{-}1}$:};
\end{tikzpicture}
\ \ 
\begin{tikzpicture}[scale= .62] 
\node (2) at (4.5,6.3) {$\beta_1$};
\node (3) at (4.5,4.2) {$\dots$};
\node (4) at (4.5,2.1) {$\beta_{n-1}$};
\node (5) at (4.5,0) {$\beta_{n}$};
\foreach \from/\to in {2/3, 3/4, 4/5} \draw [-,line width=.7pt] (\from) -- (\to);

\draw (6.4,6.3) node {${\color{forest}\{} \un{\un{{\color{forest}\ap_{1}}}}, \un{\un{{\color{forest}\ap_{2n}}}}  {\color{forest}\}}$};
\draw (7.1, 2.1) node {${\color{forest}\{}  \un{\un{ {\color{forest}\ap_{n-1}}}}, \un{\un{{\color{forest}\ap_{n+2}}}}  {\color{forest}\}}$ };
\draw (6.7,0) node {${\color{forest}\{} \un{\un{ {\color{forest}\ap_{n}}}}, \un{\un{{\color{forest}\ap_{n+1}}}}  {\color{forest}\}}$ };

\draw (3,3.6) node {$\GR{A}{2n}$:};
\end{tikzpicture}
\ \ 
\begin{tikzpicture}[scale= .62] 
\node (2) at (4.5,6.3) {$\beta_1$};
\node (3) at (4.5,4.2) {$\dots$};
\node (4) at (4.5,2.1) {$\beta_{p{-}1}$};
\node (5) at (4.5,0) {$\beta_p$};
\foreach \from/\to in {2/3, 3/4, 4/5} \draw [-,line width=.7pt] (\from) -- (\to);

\draw (5.6,6.3) node {{\color{forest}$\{\ap_1\}$}};
\draw (6.2, 2.1) node {{\color{forest}$\{\ap_{p{-}1}\}$}};
\draw (5.6,0) node {${\color{forest}\{} \un{\un{{\color{forest}\ap_p}}} {\color{forest}\}}$};

\draw (3.3,3.6) node {$\GR{C}{p}$:};
\end{tikzpicture}
\ \
\begin{tikzpicture}[scale= .62] 
\node (2) at (4.5,6.3) {$\beta_1$};
\node (3) at (4.5,4.2) {$\beta_2$};
\node (4) at (4.5,2.1) {$\beta_3$};
\node (5) at (4.5,0) {$\beta_4$};
\foreach \from/\to in {2/3, 3/4, 4/5} \draw [-,line width=.7pt] (\from) -- (\to);

\draw (5.6,6.3) node {{\color{forest}$\{\ap_4\}$}};
\draw (5.6, 4.2) node {{\color{forest}$\{\ap_1\}$}};
\draw (5.6, 2.1) node {{\color{forest}$\{\ap_2\}$}};
\draw (5.6,0) node {{\color{forest}$\{\ap_3\}$}};

\draw (3.3,3.6) node {$\GR{F}{4}$:};
\end{tikzpicture}
\end{center}
\end{figure}

\begin{figure}[ht]    
\caption{The marked cascade posets for series  $\GR{B}{p}$, $p\ge 4$}  
\label{fig:Bn}
\vskip1.5ex
\begin{center}
\begin{tikzpicture}[scale= .62] 
\node  (1) at (9,10.5) { $\beta_1$};
\node  (2) at (6,8.4) { $\beta_2$};
\node  (3) at (9,8.4) { $\beta_3$};
\node  (4) at (6,6.3) { $\beta_4$};
\node  (5) at (9,6.3) {$\dots$};
\node  (6) at (6,4.2) {$\dots$};
\node  (7) at (9,4.2) { $\beta_{2n-3}$};
\node  (8) at (6,2.1) { $\beta_{2n-2}$};
\node  (9) at (9,2.1) { $\beta_{2n-1}$}; 
\node  (10) at (6,0) { $\beta_{2n}$}; 
\node  (11) at (9,0) { $\beta_{2n+1}$}; 
\foreach \from/\to in {1/2, 1/3, 3/4, 3/5, 5/7,7/8, 7/9, 9/10, 9/11} \draw [-,line width=.7pt] (\from) -- (\to);
\draw[loosely dotted] (5)--(6);

\draw (10.1,10.5) node {{\color{forest}  $\{\ap_2\}$}};
\draw (4.7,8.4) node {${\color{forest}\{} \un{\un{{\color{forest}\ap_1}}} {\color{forest}\}}$};
\draw (10.1,8.4) node {{\color{forest}  $\{\ap_4\}$}};
\draw (4.7, 6.3) node {{\color{forest}  $\{\ap_3\}$}};
\draw (10.9, 4.2) node {{\color{forest}  $\{\ap_{2n-2}\}$}};
\draw (4.1,2.1) node {{\color{forest}  $\{\ap_{2n-3}\}$}};
\draw (10.7,2.1) node {{\color{forest}  $\{\ap_{2n}\}$}};
\draw (4.3,0) node {{\color{forest}  $\{\ap_{2n-1}\}$}};
\draw (11,0) node {{\color{forest}  $\{\ap_{2n+1}\}$}};

\draw (2.6,4.2) node {$\GR{B}{2n+1}$:};
\end{tikzpicture}
\quad
\begin{tikzpicture}[scale= .62] 
\node (1) at (9,10.5) { $\beta_1$};
\node (2) at (6,8.4) { $\beta_2$};
\node (3) at (9,8.4) { $\beta_3$};
\node (4) at (6,6.3) { $\beta_4$};
\node (5) at (9,6.3) {$\dots$};
\node  (6) at (6,4.2)     {$\dots$};
\node (7) at (9,4.2) { $\beta_{2n-3}$};
\node (8) at (6,2.1) { $\beta_{2n-2}$};
\node (9) at (9,2.1) { $\beta_{2n-1}$}; 
\node (10) at (6,0) { $\beta_{2n}$}; 
\foreach \from/\to in {1/2, 1/3, 3/4, 3/5, 5/7,7/8, 7/9, 9/10} \draw [-,line width=.7pt] (\from) -- (\to);
\draw[loosely dotted] (5)--(6);

\draw (10.1,10.5) node {{\color{forest}  $\{\ap_2\}$}};
\draw (4.7,8.4) node {${\color{forest}\{} \un{\un{{\color{forest}\ap_1}}} {\color{forest}\}}$};
\draw (10.1,8.4) node {{\color{forest}  $\{\ap_4\}$}};
\draw (4.7, 6.3) node {{\color{forest}  $\{\ap_3\}$}};
\draw (10.9, 4.2) node {{\color{forest}  $\{\ap_{2n-2}\}$}};
\draw (4.1,2.1) node {{\color{forest}  $\{\ap_{2n-3}\}$}};
\draw (10.7,2.1) node {{\color{forest}  $\{\ap_{2n}\}$}};
\draw (4.3,0) node {{\color{forest}  $\{\ap_{2n-1}\}$}};

\draw (2.4,4.2) node { $\GR{B}{2n}$:};
\end{tikzpicture}
\end{center}
\end{figure}

\begin{figure}[ht]    
\caption{The marked cascade posets for series $\GR{D}{p}$, $p\ge 5$}   
\label{fig:Dn}
\vskip1.5ex
\begin{center}
\begin{tikzpicture}[scale= .62]
\node  (1) at (9,10.5) { $\beta_1$};
\node  (2) at (5.5,8.4) { $\beta_2$};
\node  (3) at (9,8.4) { $\beta_3$};
\node  (4) at (5.5,6.3) { $\beta_4$};
\node  (5) at (9,6.3) {$\dots$};
\node  (6) at (5.5,4.2) {$\dots$};
\node  (7) at (9,4.2) { $\beta_{2n-5}$};
\node  (8) at (5.5,2.1) { $\beta_{2n-4}$};
\node  (9) at (9,2.1) { $\beta_{2n-3}$}; 
\node  (10) at (5.5,0) { $\beta_{2n-2}$}; 
\node  (11) at (10,0) { $\beta_{2n-1}$}; 
\node  (12) at (12,0) { $\beta_{2n}$}; 
\foreach \from/\to in {1/2, 1/3, 3/4, 3/5, 5/7,7/8, 7/9, 9/10, 9/11,9/12} \draw [-,line width=.7pt] (\from) -- (\to);
\draw[loosely dotted] (5)--(6);

\draw (10.1,10.5) node {{\color{forest}  $\{\ap_2\}$}};
\draw (4.2,8.4) node {${\color{forest}\{} \un{\un{ {\color{forest}\ap_1}}}  {\color{forest}\}}$ };
\draw (10.1,8.4) node {{\color{forest}  $\{\ap_4\}$}};
\draw (4.2, 6.3) node {{\color{forest}  $\{\ap_3\}$}};
\draw (10.9, 4.2) node {{\color{forest}  $\{\ap_{2n-4}\}$}};
\draw (3.6,2.1) node {{\color{forest}  $\{\ap_{2n-5}\}$}};
\draw (10.9,2.1) node {{\color{forest}  $\{\ap_{2n-2}\}$}};
\draw (3.6,0) node {{\color{forest}  $\{\ap_{2n-3}\}$}};
\draw (8.4,0) node {${\color{forest}\{}\! \un{\un{{\color{forest}\ap_{2n}}}}\! {\color{forest}\}}$};
\draw (13.5,0) node {${\color{forest}\{} \un{\un{ {\color{forest}\ap_{2n-1}}}}  {\color{forest}\}}$ };

\draw (2.8,4.2) node { $\GR{D}{2n}$:};
\end{tikzpicture}
\quad
\begin{tikzpicture}[scale= .62]
\node (1) at (9,10.5) { $\beta_1$};
\node (2) at (6,8.4) { $\beta_2$};
\node (3) at (9,8.4) { $\beta_3$};
\node (4) at (6,6.3) { $\beta_4$};
\node (5) at (9,6.3) {$\dots$};
\node  (6) at (6,4.2)     {$\dots$};
\node (7) at (9,4.2) { $\beta_{2n-3}$};
\node (8) at (6,2.1) { $\beta_{2n-2}$};
\node (9) at (9,2.1) { $\beta_{2n-1}$}; 
\node (10) at (6,0) { $\beta_{2n}$}; 
\foreach \from/\to in {1/2, 1/3, 3/4, 3/5, 5/7,7/8, 7/9, 9/10} \draw [-,line width=.7pt] (\from) -- (\to);
\draw[loosely dotted] (5)--(6);

\draw (10.1,10.5) node {{\color{forest}  $\{\ap_2\}$}};
\draw (4.7,8.4) node {${\color{forest}\{} \un{\un{ {\color{forest}\ap_1}}}  {\color{forest}\}}$};
\draw (10.1,8.4) node {{\color{forest}  $\{\ap_4\}$}};
\draw (4.7, 6.3) node {{\color{forest}  $\{\ap_3\}$}};
\draw (10.9, 4.2) node {{\color{forest}  $\{\ap_{2n-2}\}$}};
\draw (4.1,2.1) node {{\color{forest}  $\{\ap_{2n-3}\}$}};
\draw (11.5,2.1) node {${\color{forest}\{}  \un{\un{ {\color{forest}\ap_{2n}}}}, \un{\un{{\color{forest}\ap_{2n+1}}}}  {\color{forest}\}}$ };
\draw (4.2,0) node {{\color{forest}  $\{\ap_{2n-1}\}$}};

\draw (2.9,4.2) node {$\GR{D}{2n+1}$:};
\end{tikzpicture}
\end{center}
\end{figure}

\vspace{.4cm}

\begin{figure}[ht]    
\caption{The marked cascade posets for $\GR{D}{4}$, $\GR{B}{3}$, $\GR{G}{2}$}   
\label{fig:D4,B3,G2}
\vskip1.5ex
\begin{center}
\begin{tikzpicture}[scale= .62]
\node  (9) at (9,2.1) { $\beta_{1}$}; 
\node  (10) at (6.2,0) { $\beta_{2}$}; 
\node  (11) at (9.5,0) { $\beta_{3}$}; 
\node  (12) at (12,0) { $\beta_{4}$}; 
\foreach \from/\to in {9/10, 9/11,9/12} \draw [-,line width=.7pt] (\from) -- (\to);

\draw (10,2.1) node {{\color{forest}  $\{\ap_{2}\}$}};
\draw (5.1,0) node {${\color{forest}\{} \un{\un{{\color{forest}\ap_1}}} {\color{forest}\}}$};
\draw (8.5,0) node {${\color{forest}\{} \un{\un{{\color{forest}\ap_4}}} {\color{forest}\}}$};
\draw (13.1,0) node {${\color{forest}\{} \un{\un{ {\color{forest}\ap_3}}}  {\color{forest}\}}$ };
\end{tikzpicture}
\qquad
\begin{tikzpicture}[scale= .62]
\node  (9) at (9,2.1) { $\beta_{1}$}; 
\node  (10) at (7.5,0) { $\beta_{2}$}; 
\node  (11) at (10.5,0) { $\beta_{3}$}; 
\foreach \from/\to in {9/10, 9/11} \draw [-,line width=.7pt] (\from) -- (\to);

\draw (10,2.1) node { {\color{forest}  $\{\ap_{2}\}$} };
\draw (6.4,0) node {${\color{forest}\{} \un{\un{{\color{forest}\ap_1}}} {\color{forest}\}}$};
\draw (9.5,0) node { {\color{forest}  $\{\ap_{3}\}$} };
\end{tikzpicture}
\qquad
\begin{tikzpicture}[scale= .62]
\node  (9) at (9, 2.1) { $\beta_{1}$}; 
\node  (10) at (9,0) { $\beta_{2}$}; 
\foreach \from/\to in {9/10} \draw [-,line width=.7pt] (\from) -- (\to);

\draw (10,2.1) node { {\color{forest}  $\{\ap_{2}\}$} };
\draw (10,0) node { {\color{forest}  $\{\ap_{1}\}$} };
\end{tikzpicture}
\end{center}
\end{figure}

\begin{figure}[ht]    
\caption{The marked cascade posets for $\GR{E}{6}$, $\GR{E}{7}$, $\GR{E}{8}$}  
\label{fig:En}
\begin{center}
\begin{tikzpicture}[scale= .63] 
\node (2) at (4.5,6.3) {$\beta_1$};
\node (3) at (4.5,4.2) {$\beta_2$};
\node (4) at (4.5,2.1) {$\beta_3$};
\node (5) at (4.5,0) {$\beta_4$};
\foreach \from/\to in {2/3, 3/4, 4/5} \draw [-,line width=.7pt] (\from) -- (\to);

\draw (5.6,6.3) node {{\color{forest}$\{\ap_6\}$}};
\draw (6.1, 4.2) node {${\color{forest}\{}  \un{\un{ {\color{forest}\ap_1}}}, \un{\un{{\color{forest}\ap_5}}}  {\color{forest}\}}$ };
\draw (6.1, 2.1) node {{\color{forest}$\{\ap_2,\ap_4\}$}};
\draw (5.6,0) node {{\color{forest}$\{\ap_3\}$}};

\draw (3,3.6) node {\large $\GR{E}{6}$:};
\end{tikzpicture}
\qquad
\begin{tikzpicture}[scale= .63] 
\node (2) at (10.5,6.3) {$\beta_1$};
\node (3) at (9,4.2) {$\beta_2$};
\node (4) at (10.5,2.1) {$\beta_3$};
\node (5) at (7.5,2.1) {$\beta_4$};
\node (6) at (6,0) {$\beta_5$};
\node (7) at (9,0) {$\beta_6$};
\node (8) at (11.5,0) {$\beta_7$}; 
\foreach \from/\to in {2/3, 3/4, 3/5, 5/6, 5/7, 5/8} \draw [-,line width=.7pt] (\from) -- (\to);

\draw (9.5,6.3) node {{\color{forest}$\{\ap_6\}$}};
\draw (8, 4.2) node {{\color{forest} $\{\ap_2\}$}};
\draw (11.5, 2.1) node {${\color{forest}\{} \un{\un{{\color{forest}\ap_1}}} {\color{forest}\}}$};
\draw (8.6,2.1) node {{\color{forest} $\{\ap_4\}$}};
\draw (5,0) node {{\color{forest} $\{\ap_3\}$}};
\draw (8,0) node {{\color{forest} $\{\ap_5\}$}};
\draw (10.5,0) node {{\color{forest} $\{\!\ap_7\!\}$}};

\draw (5,3.6) node {\large $\GR{E}{7}$:};
\end{tikzpicture}
\qquad
\begin{tikzpicture}[scale= .63] 
\node   (1) at (9,8.4) {$\beta_1$};
\node   (2) at (10.5,6.3) {$\beta_2$};
\node   (3) at (9,4.2) {$\beta_3$};
\node   (4) at (10.5,2.1) {$\beta_4$};
\node   (5) at (7.5,2.1) {$\beta_5$};
\node   (6) at (6,0) {$\beta_6$};
\node   (7) at (9,0) {$\beta_7$};
\node   (8) at (11.5,0) {$\beta_8$}; 
\foreach \from/\to in {1/2, 2/3, 3/4, 3/5, 5/6, 5/7, 5/8} \draw [-,line width=.7pt] (\from) -- (\to);

\draw (10,8.4) node {{\color{forest} $\{\ap_1\}$}};
\draw (11.5,6.3) node {{\color{forest} $\{\ap_7\}$}};
\draw (10, 4.2) node {{\color{forest} $\{\ap_3\}$}};
\draw (11.5, 2.1) node {{\color{forest} $\{\ap_2\}$}};
\draw (8.6,2.1) node {{\color{forest} $\{\ap_5\}$}};
\draw (7.1,0) node {{\color{forest} $\{\ap_4\}$}};
\draw (10.1,0) node {{\color{forest} $\{\ap_6\}$}};
\draw (12.5,0) node {{\color{forest} $\{\ap_8\}$}};

\draw (6,3.6) node {\large $\GR{E}{8}$:};
\end{tikzpicture}
\end{center}
\end{figure}

\end{document}